\DeclareMathOperator*{\essinf}{ess\,inf}
\DeclareMathOperator*{\esssup}{ess\,sup}
\DeclareMathOperator*{\loc}{loc}
\newcolumntype{M}[1]{>{\centering\arraybackslash}m{#1}}
\newcolumntype{N}{@{}m{0pt}@{}}
\newcommand*\diff{\mathop{}\!\mathrm{d}}
\begin{document}
	\title[  The weighted \MakeLowercase {p}-Laplacian in symmetric domains]
	{On the eigenvalue problem involving the weighted \MakeLowercase{$p$}-Laplacian in radially symmetric domains}
	
	\author[P. Dr\'abek, K. Ho \& A. Sarkar]
	{Pavel Dr\'{a}bek, Ky Ho and Abhishek Sarkar}
	\address{Pavel Dr\'{a}bek \newline
		Department of Mathematics, University of West Bohemia, Univerzitn\'i 8, 306 14 Plze\v{n}, Czech Republic}
	\email{pdrabek@kma.zcu.cz}
	\address{Ky Ho \newline
		Department of Mathematics, Faculty of Sciences, Nong Lam University, Linh Trung Ward, Thu Duc District, Ho Chi Minh City, Vietnam}
	\email{hnky81@gmail.com}
	\address{Abhishek Sarkar \newline
		NTIS, University of West Bohemia, Technick\'a 8, 306 14 Plze\v{n}, Czech Republic}
	\email{sarkara@ntis.zcu.cz}
	
	\subjclass[2010]{35J92, 35J20, 35P30, 35B40, 35B65, 35B50 }
	\keywords{the weighted $p$-Laplacian; the first eigenvalue; exterior domain; regularity; asymptotic behavior; maximum principles; variational method}
	
	\begin{abstract}
		We investigate the following  eigenvalue problem
		\begin{align*}
		\begin{cases}
		-\operatorname{div}\left( L(x) |\nabla u| ^{p-2}\nabla u\right)=\lambda K(x)|u|^{p-2}u \quad \text{in } A_{R_1}^{R_2} ,\\
		u=0\quad \text{on } \partial A_{R_1}^{R_2} ,
		\end{cases}
		\end{align*} 
		where $A_{R_1}^{R_2}:=\{x\in\mathbb{R}^N: R_1<|x|<R_2\}$ $(0< R_1<R_2\leq\infty)$, $\lambda>0$ is a parameter, the weights $L$ and $K$ are measurable with $L$ positive a.e. in $A_{R_1}^{R_2}$ and $K$ possibly sign-changing in $A_{R_1}^{R_2}$. We prove the existence of the first eigenpair and discuss the regularity and positiveness of eigenfunctions. 
		 The asymptotic estimates for $u(x)$ and $\nabla u(x)$ as $|x|\to R_1^+$ or $R_2^-$ are also investigated. 
	\end{abstract}
	
	\maketitle
	\numberwithin{equation}{section}
	\newtheorem{theorem}{Theorem}[section]
	\newtheorem{lemma}[theorem]{Lemma}
	\newtheorem{proposition}[theorem]{Proposition}
	\newtheorem{corollary}[theorem]{Corollary}
	\newtheorem{definition}[theorem]{Definition}
	\newtheorem{example}[theorem]{Example}
	\newtheorem{remark}[theorem]{Remark}
	\allowdisplaybreaks

\section{Introduction and main results}\label{introduction}
In this paper we investigate the following eigenvalue problem
\begin{align}\label{1.1}
\begin{cases}
-\operatorname{div}\left( L(x) |\nabla u| ^{p-2}\nabla u\right)=\lambda K(x)|u|^{p-2}u \quad \text{in } A_{R_1}^{R_2} ,\\
u=0\quad \text{on } \partial A_{R_1}^{R_2} ,
\end{cases}
\end{align}
where the weight $L$ is measurable and positive a.e. in  $A_{R_1}^{R_2}:=\{x\in\mathbb{R}^N: R_1<|x|<R_2\}$ $(0< R_1<R_2\leq\infty)$  such that $L\in L^1_{\loc}(A_{R_1}^{R_2} );$  the weight $K$ is measurable in  $A_{R_1}^{R_2}$ such that $\operatorname{meas}\{x\in A_{R_1}^{R_2}: K(x)>0\}>0;$ $\lambda$ is a spectral parameter. For the notational convenience we denote the operator $\operatorname{div}\left( L(x) |\nabla u| ^{p-2}\nabla u\right)$ by $\Delta_{p,L}$ and by $|S|$ we denote the Lebesgue measure of $S \subset \mathbb{R}^N$. We note that $K$ might change the sign in $A_{R_1}^{R_2}.$
\begin{itemize}
	\item[($\mathrm{A}$)]  there exist functions $v,w$ measurable and positive a.e. in $(R_1,R_2)$, such that $v^{-\frac{1}{p-1}},w\in L^1_{\loc}(R_1,R_2)$ and
	\begin{itemize}
		\item[(i)] $P(r):=\min\left\{\left(\int_{R_1}^{r}\rho^{1-p'}(\tau)\diff \tau\right)^{p-1},\left(\int_{r}^{R_2}\rho^{1-p'}(\tau)\diff \tau\right)^{p-1}\right\}<\infty$ for all $r\in (R_1,R_2)$ and $\int_{R_1}^{R_2}P(r)\sigma(r)\diff r<\infty,$ where $p':=\frac{p}{p-1},$ $\rho(r):=r^{N-1}v(r)$ and $\sigma(r):=r^{N-1}w(r);$
		\item[(ii)] $L(x)\geq v(|x|)$ and $|K(x)|\leq w(|x|)$ for a.e. $x\in A_{R_1}^{R_2}.$
	\end{itemize}
	\end{itemize} 
\par Equation \eqref{1.1}, which contains  \emph{weighted $p$-Laplacian} operator $\Delta_{p,L}$, describes several important phenomena which arise in Mathematical Physics, Riemannian geometry, Astrophysics, study of non-Newtonian fluids, subsonic motion of gases etc. (see e.g., \cite{Mitidieri, Wang}). 
A weighted second order linear differential operator was basically introduced by  Murthy and Stampacchia \cite{Murthy}, being then extended to higher order linear weighted elliptic operators in the 80s and quasilinear elliptic equations including the weighted $p$-Laplacian in the 90s (see Dr\'{a}bek et al. \cite{Dra-Kuf-Nic}).

\par  The problem \eqref{1.1} in case of bounded domains or $\mathbb{R}^N$, was comprehensively investigated in \cite{Dra-Kuf-Nic}, with suitable weights, and later studied by many authors, we mention Le-Schmitt \cite{Les2}, L\^{e}-Schmitt \cite{Les1},  and references therein.

\par The weighted $p$-Laplacian eigenvalue problem in case of unbounded domains has got attention in the last two decades. In \cite{Mon-Rad, Per-Pucci}, authors studied existence of an eigensolution with nonnegative weights on the right hand side for a nonlinear eigenvalue problem with mixed boundary condition. For an exterior domain $B_1^c$, the complement of the closed unit ball in $\mathbb{R}^N$ ($N\geq 2$), Anoop et al. \cite{Anoop.CV, Anoop.NA} studied the eigenvalue problem \eqref{1.1} with $L(x)\equiv 1$ and the weight $K$ satisfying the following condition
\begin{itemize}
	\item[(ADS)]  $K\in L_{\loc}^1(B_1^c)$, $\operatorname{meas}\{x\in B_1^c: K(x)>0\}>0$ and there exists a positive function $w$ such that 
	\begin{itemize}
		\item[(i)]  $w\in \begin{cases}
		L^1((1,\infty);r^{p-1}),\ p\ne N,\\
		L^1((1,\infty);[r\log r]^{N-1}),\ p=N;
		\end{cases}$
		\item[(ii)] $|K(x)|\leq w(|x|)$ for a.e. $x\in B_1^c.$ 
		
	\end{itemize}  
	
\end{itemize}
The authors proved the existence of a principal eigenvalue and discussed positivity and regularity of associated eigenfunctions when $K$ satisfies some additional assumptions. It is worth mentioning that they allowed also the case $p\geq N$ and $K$ possibly changing sign.

 Another interesting aspect of qualitative properties is the behavior of solutions towards the boundary. The asymptotic estimates for solutions to problem \eqref{1.1} in exterior domains with $L(x)\equiv 1$ was obtained by several authors (see e.g., \cite{Chhetri-Drabek, Anoop.CV}). However, very few works deal with such kind of estimates for the weighted $p$-Laplacian. In the open ball $B_R$ of radius $R$ $(0<R\leq\infty)$ centered at the origin with the convention that $B_R:=\mathbb{R}^N$ when $R=\infty,$ the authors in \cite{Dra-Kuf-Kul, Oscar-Drabek} recently obtained the asymptotic estimates for 
solutions to \eqref{1.1} with radially symmetric weights $L(x)=v(|x|)$ and $K(x)=w(|x|)$ satisfying the following condition introduced in the book by Opic and Kufner \cite{Opic-Kufner}: 
\begin{itemize}
	\item [(OK)] $\begin{cases}
	\text{either }\ \left(\int_{a}^{r}\sigma(\tau)\diff \tau\right)\left(\int_{r}^{b}\rho^{1-p'}(\tau)\diff \tau\right)^{p-1}\to 0\ \text{as}\ r\to a^+,b^-,\\
	\text{or }\ \left(\int_{r}^{b}\sigma(\tau)\diff \tau\right)\left(\int_{a}^{r}\rho^{1-p'}(\tau)\diff \tau\right)^{p-1}\to 0\ \text{as}\ r\to a^+,b^-, -\infty \leq a < b \leq \infty,
	\end{cases}$
\end{itemize}
with $a=0$ and $b=R$.
\par
The goal of this paper is twofold. First, we investigate the eigenvalue problem \eqref{1.1} with the weights $L,K$ possibly not bounded and/or not separated away from zero in a general radially symmetric domain $A_{R_1}^{R_2}.$ Second, we obtain the asymptotic estimates for solutions to problem \eqref{1.1} when the weights are radially symmetric. As in \cite{Anoop.CV}, there is no restriction on the dimension $N$ in terms of $p.$ We emphasize that for simplicity and clarity of statements of our results we are only concerned with two types of domains:  annulus ($0<R_1<R_2<\infty$) and exterior of the ball of radius $R_1$ ($0<R_1<R_2=\infty$). In fact, 
	some of our results also covers other two types of radially symmetric domains: bounded balls $B_R$ ($0<R<\infty$) and the entire space $\mathbb{R}^N$ (see Remarks~\ref{domains} and \ref{problems_on_balls}). 

The novelty of this paper consists in considering \eqref{1.1} with new condition on the weights. 
 Even when $L(x)=v(|x|)\equiv 1$, the condition $\textup{(A)}$ for the weight $K$ is slightly weaker than the condition $\textup{(ADS)}$ introduced in \cite{Anoop.CV} (see Remark~\ref{2.Rmk.Compare_with_ADS} in Section~\ref{Sec.Preliminaries}). It is worth mentioning that there are weights $v,w$ which satisfy $\textup{(A)}$ but do not satisfy $\textup{(OK)}$ (see Remark~\ref{2.Rmk.compare_with_(OK)} in Section~\ref{Sec.Preliminaries}). We confess that we are not aware of weights $v$ and $w$ satisfying (OK) but not (A). Hence the class of weights satisfying $\textup{(A)}$ is a complement of the class of weights satisfying $\textup{(OK)}$ in order to study \eqref{1.1} with radially symmetric weights. 

We look for solutions of \eqref{1.1} in the space $\mathcal{D}_{0}^{1,p}(A_{R_1}^{R_2};L),$ which is the completion of $C_c^1(A_{R_1}^{R_2})$ ($C^1$ functions with compact support) with respect to the norm
$$\|u\|:=\left(\int_{A_{R_1}^{R_2}}L(x)|\nabla u|^p\diff x\right)^{1/p}.$$ 
We note that $\mathcal{D}_{0}^{1,p}(A_{R_1}^{R_2};L)$ is well defined uniformly convex Banach space under the assumption $\textup{(A)}$ (see Theorem~\ref{2.theorem.embedding} in Section~\ref{Sec.Preliminaries}). Moreover, we will prove in Section~\ref{Sec.Preliminaries} that if $\textup{(A)}$ holds and $L^{-s}\in L^1_{\loc} (A_{R_1}^{R_2})$ for some $s\in (\frac{N}{p},\infty)\cap [\frac{1}{p-1},\infty),$ then $\mathcal{D}_{0}^{1,p}(A_{R_1}^{R_2};L)$ is compactly embedded in $L^p(A_{R_1}^{R_2};w),$ the space of measurable functions $u$ such that $\int_{A_{R_1}^{R_2}}w(|x|)|u|^p \diff x<\infty$ (see Theorem~\ref{2.theorem.compact_embedding}). 
\begin{definition}\rm
By a \emph{(weak) solution} of problem \eqref{1.1}, we mean a function $u\in \mathcal{D}_{0}^{1,p}(A_{R_1}^{R_2};L)$ such that
	\begin{equation*}     
	\int_{A_{R_1}^{R_2}}L(x)|\nabla u|^{p-2}\nabla u\cdot \nabla v \diff x=\lambda \int_{A_{R_1}^{R_2}}K(x)|u|^{p-2}uv\diff x, \quad \forall v\in  \mathcal{D}_{0}^{1,p}(A_{R_1}^{R_2};L).   
\end{equation*}
\end{definition}
 If problem \eqref{1.1} has a nontrivial solution $u$ then $\lambda$ is called an \emph{eigenvalue} of $-\Delta_{p,L}$ in $A_{R_1}^{R_2}$ related to the weight $K$ (an eigenvalue, for short) and such a solution $u$ is called an \emph{eigenfunction} corresponding to the eigenvalue $\lambda.$\\
Define
\begin{equation}\label{def.lamda1}
\lambda_1:=\inf \left\{\int_{A_{R_1}^{R_2}}L(x)|\nabla u|^p\diff x:\ u\in \mathcal{D}_{0}^{1,p}(A_{R_1}^{R_2};L),\int_{A_{R_1}^{R_2}}K(x)|u|^p\diff x=1\right\}.
\end{equation}
We state our first main result of the existence of a principal eigenvalue and its simplicity.
\begin{theorem}[Principal eigenpair]\label{Theore.eigenpair}
	Assume that $\mathrm{(A)}$ holds and $L^{-s} \in L^1_{\loc}(A_{R_1}^{R_2})$ for some $s\in (\frac{N}{p},\infty)\cap [\frac{1}{p-1},\infty).$ Then $\lambda_1>0$ and $\lambda_1$ is a simple eigenvalue of \eqref{1.1}. Moreover  $\lambda_1$ is  achieved at an eigenfunction $\varphi_1,$ which is positive a.e. in $A_{R_1}^{R_2}.$ 
\end{theorem}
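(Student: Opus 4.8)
The plan is to use the direct method of the calculus of variations together with the compact embedding guaranteed by the hypotheses. First I would set up the minimization problem: the functional $u\mapsto \|u\|^p = \int_{A_{R_1}^{R_2}} L(x)|\nabla u|^p\diff x$ restricted to the constraint set $\mathcal{M}:=\{u\in \mathcal{D}_{0}^{1,p}(A_{R_1}^{R_2};L): \int_{A_{R_1}^{R_2}} K(x)|u|^p\diff x = 1\}$. The set $\mathcal{M}$ is nonempty: since $\operatorname{meas}\{K>0\}>0$ one can pick $u_0\in C_c^1$ with $\int K|u_0|^p>0$ and rescale. To see $\lambda_1>0$, suppose a minimizing sequence $(u_n)\subset \mathcal{M}$ has $\|u_n\|^p\to\lambda_1$; it is bounded in the reflexive (uniformly convex) space $\mathcal{D}_{0}^{1,p}(A_{R_1}^{R_2};L)$, so up to a subsequence $u_n\rightharpoonup \varphi_1$ weakly. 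By Theorem~\ref{2.theorem.compact_embedding} the embedding into $L^p(A_{R_1}^{R_2};w)$ is compact, and since $|K(x)|\le w(|x|)$, the map $u\mapsto \int K|u|^p\diff x$ is continuous on $\mathcal{D}_{0}^{1,p}(A_{R_1}^{R_2};L)$ with respect to this convergence; hence $\int K|\varphi_1|^p\diff x = 1$, so $\varphi_1\in\mathcal{M}$ and in particular $\varphi_1\ne 0$, forcing $\lambda_1 = \|\varphi_1\|^p>0$ (using $\|\varphi_1\|>0$ because $\varphi_1\ne 0$ and the norm is a genuine norm). Weak lower semicontinuity of $\|\cdot\|^p$ gives $\|\varphi_1\|^p\le\liminf\|u_n\|^p=\lambda_1$, so equality holds and $\varphi_1$ is a minimizer.

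Next I would show $\varphi_1$ is an eigenfunction with eigenvalue $\lambda_1$ via the Lagrange multiplier rule: both $u\mapsto \|u\|^p$ and $u\mapsto\int K|u|^p\diff x$ are $C^1$ on $\mathcal{D}_{0}^{1,p}(A_{R_1}^{R_2};L)$, and the derivative of the constraint functional at $\varphi_1$ is nonzero (testing against $\varphi_1$ gives $p\int K|\varphi_1|^p\diff x = p\ne 0$), so there is $\mu\in\mathbb{R}$ with $\int L|\nabla\varphi_1|^{p-2}\nabla\varphi_1\cdot\nabla v\diff x = \mu\int K|\varphi_1|^{p-2}\varphi_1 v\diff x$ for all test $v$; taking $v=\varphi_1$ yields $\mu=\lambda_1$. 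Thus $\lambda_1$ is an eigenvalue with eigenfunction $\varphi_1$. For positivity, I would replace $\varphi_1$ by $|\varphi_1|$, which lies in $\mathcal{D}_{0}^{1,p}(A_{R_1}^{R_2};L)$ with the same norm and still satisfies the constraint, hence is also a minimizer/eigenfunction; so we may assume $\varphi_1\ge 0$ a.e. To upgrade $\ge 0$ to $>0$ a.e.\ I would invoke a Harnack-type inequality or strong maximum principle for $-\Delta_{p,L}$ on compact subsets of $A_{R_1}^{R_2}$ (together with the interior regularity that follows from $L^{-s}\in L^1_{\loc}$ with $s>N/p$, which is exactly the De Giorgi–Nash–Moser condition on the weight), ruling out that $\varphi_1$ vanishes on a set of positive measure while being nontrivial; since $A_{R_1}^{R_2}$ is connected this gives $\varphi_1>0$ a.e.

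For simplicity I would argue that any eigenfunction associated to $\lambda_1$ is, after the above, a minimizer of the constrained problem, and then use a convexity/Picone-type argument. The cleanest route is the hidden convexity of $t\mapsto \int L|\nabla (t^{1/p})|^p$ along the curve built from two positive minimizers $u,v$: set $w_t = (t u^p + (1-t)v^p)^{1/p}$ and show $\|w_t\|^p\le t\|u\|^p + (1-t)\|v\|^p$ with equality iff $u$ and $v$ are proportional (this is the Dáz–Saá / Benguria–Brezis–Lieb convexity inequality, whose proof is pointwise and insensitive to the weight $L$). Combined with the constraint $\int K u^p = \int K v^p = 1$ and the fact that $w_t$ also satisfies the constraint (since $K w_t^p = t K u^p + (1-t)K v^p$), we get $\lambda_1\le\|w_t\|^p\le t\lambda_1 + (1-t)\lambda_1 = \lambda_1$, so equality forces $u = cv$; normalizing, $c=1$. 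The main obstacle in all of this is the positivity step: ensuring the strong maximum principle / Harnack inequality is available for the degenerate weighted operator $-\Delta_{p,L}$ under only the local integrability hypotheses on $L$ and $L^{-1}$, and making sure the Díaz–Saá inequality argument is rigorous when $u,v$ may vanish on $\partial A_{R_1}^{R_2}$ or when $R_2=\infty$ (one typically truncates/approximates $u,v$ by compactly supported functions and passes to the limit, using condition (A) to control boundary terms). Everything else—reflexivity, compact embedding, weak lower semicontinuity, Lagrange multipliers—is handed to us by the preliminary results and the structure of uniformly convex Banach spaces.
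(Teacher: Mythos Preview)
Your existence argument (direct method, compact embedding, weak lower semicontinuity, Lagrange multiplier, replace $\varphi_1$ by $|\varphi_1|$) is essentially the paper's Lemma~\ref{3.existence}, so that part is fine.

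The genuine gap is exactly what you flag as ``the main obstacle'': the upgrade from $\varphi_1\ge 0$ a.e.\ to $\varphi_1>0$ a.e.\ cannot be dismissed as ``invoke a Harnack-type inequality or strong maximum principle'' under the hypotheses here. No continuity of $L$ or $K$ is assumed, only $L^{-s}\in L^1_{\loc}$, and solutions are a priori only in $W^{1,p_s}_{\loc}$, so the standard De Giorgi--Nash--Moser Harnack machinery for the weighted operator is not immediately available. The paper handles this by proving a tailored strong-maximum-principle lemma (Lemma~\ref{3.le.smp}) in the style of Kawohl--Lucia--Prashanth: one tests with $\xi^p/(u+\delta)^{p-1}$, derives a uniform $W^{1,p_s}$ bound on $\log(1+u/\delta)$ (this is where the assumption $L^{-s}\in L^1_{\loc}$ is used, via H\"older, to pass from the weighted $L$-energy to an unweighted $p_s$-energy), and concludes that the zero set has $p_s$-capacity zero, hence Lebesgue measure zero. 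The point is that the weight forces one to work with $p_s$-capacity rather than $p$-capacity throughout.

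For simplicity, your D\'{\i}az--Sa\'a hidden-convexity route is a legitimate alternative to the paper's approach, which instead imports the Picone-identity argument of \cite{Kawohl} (again substituting $p_s$-capacity for $p$-capacity). Both routes, however, require the strict positivity a.e.\ of the competing eigenfunctions as input, so until you have a concrete proof of Lemma~\ref{3.le.smp} (or an equivalent) under the stated hypotheses, neither your simplicity argument nor the paper's can be completed. In short: your outline is correct in architecture, but the positivity step is not a black box here---it is the heart of the proof and needs the capacity argument spelled out.
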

Next, we state our results on the boundedness of solutions to problem \eqref{1.1} that will be utilized to obtain the $C^1$ regularity of solutions. The following theorems  show that all eigenfunctions to eigenvalue problem \eqref{1.1} are locally bounded in $A_{R_1}^{R_2}$ if the weights satisfy some additional assumptions. In fact, in Section~\ref{Sec.Boundedness} we obtain the boundedness of solutions for a more general nonlinear term (see Theorem~\ref{apriorimainthm}) via the De Giorgi type iteration technique. In the sequel, for $\alpha>0$ we use the convention that $\frac{\alpha}{0}:=\infty$ and define $p_\alpha:=\frac{p\alpha}{\alpha+1}$ and  $\alpha^\ast:=\begin{cases}
\frac{N\alpha}{N-\alpha}& \text{if\ \ } \alpha<N,\\
\infty& \text{if\ \ } \alpha\geq N.
\end{cases}$ 
\begin{theorem}[Boundedness I]\label{Theorem.local.behaviorI}
	Assume that $\mathrm{(A)}$ holds.  Assume in addition that $L^{-s},L^{\frac{q}{q-p}}, |K|^{\frac{q}{q-p}}\in L^1(A_{R_1}^{R_1+2\epsilon})$ for some $\epsilon\in (0,\frac{R_2-R_1}{2})$, $s\in (\frac{N}{p},\infty)\cap [\frac{1}{p-1},\infty)$ and $q\in [p,p_s^\ast).$   Then  for any solution $u$ of problem \eqref{1.1} we have $u\in L^q(A_{R_1}^{R_1+2\epsilon})\cap L^\infty\big(A_{R_1}^{R_1+\epsilon}\big)$ and there exist $C>0$ and $\mu>0$ (independent of $u$) such that 
	\begin{equation*} 
	\|u\|_{L^\infty\big(A_{R_1}^{R_1+\epsilon}\big)}\leq C\left[1+\left(\int_{A_{R_1}^{R_1+2\epsilon}}|u|^q\diff x\right)^{\mu}\right].
	\end{equation*}
\end{theorem}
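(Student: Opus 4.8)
The plan is to establish Theorem~\ref{Theorem.local.behaviorI} via a De Giorgi truncation scheme applied to the weak formulation, exploiting the auxiliary integrability hypotheses to run the iteration on the fixed annular region $A_{R_1}^{R_1+2\epsilon}$. First I would fix a solution $u\in\mathcal{D}_0^{1,p}(A_{R_1}^{R_2};L)$ and a smooth cutoff $\eta$ supported in $A_{R_1}^{R_1+2\epsilon}$ with $\eta\equiv 1$ on $A_{R_1}^{R_1+\epsilon}$; then, for levels $k\ge k_0\ge 1$, test the equation with $v=\eta^{p}(u-k)_+$ (and symmetrically with the negative part, so that without loss of generality $u\ge 0$ after the usual reduction). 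Expanding $\nabla v$ and absorbing the cross term $L|\nabla u|^{p-1}\eta^{p-1}(u-k)_+|\nabla\eta|$ by Young's inequality into $\int L\eta^p|\nabla(u-k)_+|^p$, one arrives at the standard Caccioppoli-type estimate
\begin{equation*}
\int_{A_{R_1}^{R_1+2\epsilon}} L\,\bigl|\nabla\bigl(\eta(u-k)_+\bigr)\bigr|^p\diff x \;\le\; C\int_{A_{R_1}^{R_1+2\epsilon}} \Bigl(L|\nabla\eta|^p + \lambda|K|\,\eta^p\Bigr)(u-k)_+^p\diff x.
\end{equation*}

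Next I would convert the left-hand side into a genuine Sobolev gradient term. Since $L^{-s}\in L^1(A_{R_1}^{R_1+2\epsilon})$ with $s\in(\tfrac{N}{p},\infty)\cap[\tfrac1{p-1},\infty)$, Hölder's inequality gives, for $w:=\eta(u-k)_+$,
\begin{equation*}
\int |\nabla w|^{p_s}\diff x \;\le\; \Bigl(\int L\,|\nabla w|^p\diff x\Bigr)^{p_s/p}\Bigl(\int L^{-s}\diff x\Bigr)^{1/(s+1)},
\end{equation*}
where $p_s=\tfrac{ps}{s+1}\in[1,N)$ because $s>\tfrac{N}{p}$; then the Sobolev inequality on $\mathbb{R}^N$ (legitimate since $w$ has compact support in $A_{R_1}^{R_1+2\epsilon}$) controls $\|w\|_{L^{p_s^\ast}}$. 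On the right-hand side I would use that $L^{q/(q-p)},|K|^{q/(q-p)}\in L^1(A_{R_1}^{R_1+2\epsilon})$ together with the conjugate-exponent pairing $\tfrac{q-p}{q}+\tfrac{p}{q}=1$ and the interpolation/Hölder bound $\int_{\{u>k\}} (u-k)_+^p\diff x \le |\{u>k\}|^{1-p/q}\|(u-k)_+\|_{L^q}^p$; this is exactly where the bootstrap information $u\in L^q(A_{R_1}^{R_1+2\epsilon})$ (which I would first derive separately, again by a single Hölder step off the a priori $L^{p_s^\ast}$-membership guaranteed by Theorem~\ref{2.theorem.compact_embedding}, or by a preliminary iteration) feeds in. Combining, and writing $A_k:=\{x\in A_{R_1}^{R_1+2\epsilon}:u(x)>k\}$, $\varphi(k):=\int_{A_k}(u-k)^q\diff x$, one obtains a recursive inequality of the form $\varphi(h)\le C(h-k)^{-\gamma}\varphi(k)^{1+\delta}$ for $h>k\ge k_0$ with $\delta>0$, which is precisely the setting of the classical De Giorgi/Ladyzhenskaya–Uraltseva iteration lemma (Stampacchia's lemma): it yields $\varphi(k)=0$ for $k\ge k_0+d$ with $d^\gamma\asymp C\varphi(k_0)^{\delta}$, hence $\|u\|_{L^\infty(A_{R_1}^{R_1+\epsilon})}\le k_0 + C\varphi(k_0)^{\delta/\gamma}\le C\bigl[1+(\int_{A_{R_1}^{R_1+2\epsilon}}|u|^q\diff x)^{\mu}\bigr]$ for a suitable $\mu>0$, since $\eta\equiv 1$ there. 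The same argument applied to $-u$ controls $\mathrm{ess\,sup}(-u)$, giving the full $L^\infty$ bound.

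The condition $q<p_s^\ast$ is what makes the exponent $\delta$ in the iteration strictly positive: one needs the gain $p_s^\ast>q$ coming from Sobolev to beat the loss $p<q$ coming from the measure factor $|A_k|^{1-p/q}$, so that after raising everything to the right power the exponent on $\varphi(k)$ exceeds $1$. I would track constants carefully enough to see that $C$ and $\mu$ depend only on $N,p,s,q,\lambda,\epsilon$ and the fixed norms $\|L^{-s}\|_{L^1}$, $\|L^{q/(q-p)}\|_{L^1}$, $\|\,|K|^{q/(q-p)}\|_{L^1}$ over $A_{R_1}^{R_1+2\epsilon}$, hence are independent of $u$, as claimed.

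I expect the main obstacle to be bookkeeping rather than conceptual: one must be careful that every cutoff function $w=\eta(u-k)_+$ genuinely lies in $\mathcal{D}_0^{1,p}(A_{R_1}^{R_2};L)$ (so that it is an admissible test function) — this uses that $u\in\mathcal{D}_0^{1,p}$, that $\eta$ is Lipschitz with compact support in $A_{R_1}^{R_2}$, and a standard truncation/density argument — and that the Sobolev inequality is applied to a function with compact support in $\mathbb{R}^N$, so that no boundary terms on $\partial A_{R_1}^{R_2}$ appear. A secondary subtlety is extracting the preliminary $u\in L^q$ statement: I would get it either directly from Theorem~\ref{2.theorem.compact_embedding} (which places $u$ in $L^p(A_{R_1}^{R_2};w)$, and then a further Hölder step using $|K|^{q/(q-p)}\in L^1$ is not quite enough for $L^q$ of Lebesgue type) or, more robustly, by first running a truncated version of the very same iteration at the level of $L^q$ integrability — i.e. testing with $v=\eta^p\min\{(u-k)_+,M\}^{...}$-type functions and letting $M\to\infty$ — to upgrade $u\in L^{p_s^\ast}\hookleftarrow\mathcal{D}_0^{1,p}$ (via Hölder with $L^{-s}$ as above) to $u\in L^q$; I would present whichever route the earlier sections make cleanest.
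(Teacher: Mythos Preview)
Your overall plan---De Giorgi truncation, a Caccioppoli estimate from the test function $\eta^p(u-k)_+$, H\"older against $L^{q/(q-p)}$ and $|K|^{q/(q-p)}$ on the right, H\"older against $L^{-s}$ followed by Sobolev on the left---is exactly the scheme the paper uses (it actually proves the more general Theorem~\ref{apriorimainthm}(i) and then specializes). There is, however, a genuine gap in the implementation you describe.

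With a \emph{single fixed} cutoff $\eta$ (equal to $1$ on $A_{R_1}^{R_1+\epsilon}$, supported in $A_{R_1}^{R_1+2\epsilon}$), the Sobolev step only controls $\|\eta(u-k)_+\|_{L^{p_s^\ast}}$, whereas your level function $\varphi(k)=\int_{A_{R_1}^{R_1+2\epsilon}}(u-k)_+^q$ lives on the full annulus. In the transition layer $R_1+\epsilon<|x|<R_1+2\epsilon$ where $\eta<1$, $(u-h)_+$ is not dominated by $\eta(u-k)_+$, so the claimed recursion $\varphi(h)\le C(h-k)^{-\gamma}\varphi(k)^{1+\delta}$ does not close: running your chain gives only $\int_{A_{R_1}^{R_1+\epsilon}}(u-h)_+^q\le C(h-k)^{-\gamma}\varphi(k)^{1+\delta}$, with the smaller-region integral on the left and the larger-region integral on the right. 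This is not Stampacchia's inequality and cannot be iterated. The fix---and what the paper does---is to couple increasing levels $k_n\uparrow k_\ast$ with \emph{shrinking outer radii} $\rho_n=R_1+\epsilon+\epsilon 2^{-n}$ via a sequence of cutoffs $\zeta_n$ satisfying $\chi_{B_{\rho_{n+1}}}\le\zeta_n\le\chi_{B_{\bar\rho_n}}$ and $|\nabla\zeta_n|\lesssim 2^n/\epsilon$; then $J_n:=\int_{\{u>k_n\}\cap A_{R_1}^{\rho_n}}(u-k_n)^q$ satisfies a genuine recursion $J_{n+1}\le C\eta^n(J_n^{1+\delta_1}+J_n^{1+\delta_2})$, and the discrete iteration Lemma~\ref{leRecur} finishes. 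If you prefer the continuous Stampacchia form, you must still carry a second variable and work with $\varphi(k,r)=\int_{A_{R_1}^{r}}(u-k)_+^q$, iterating in $k$ and $r$ jointly. Either way, this is more than bookkeeping: the need for nested cutoffs is the mechanism that lets you trade the uncontrolled outer layer for geometric growth in the recursion constant.

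Two smaller points. First, the preliminary fact $u\in L^q(A_{R_1}^{R_1+2\epsilon})$ is immediate from the embedding $\mathcal{D}_0^{1,p}(A_{R_1}^{R_2};L)\hookrightarrow W^{1,p_s}(A_{R_1}^{R_1+2\epsilon})\hookrightarrow L^q$ for $q<p_s^\ast$ (Lemma~\ref{well-defined}(i)); no bootstrap is needed. Second, $\eta(u-k)_+$ need not have compact support in $A_{R_1}^{R_1+2\epsilon}$ (it vanishes near $|x|=R_1+2\epsilon$ but not near $|x|=R_1$); the paper handles this either via the $W^{1,p_s}$ (not $W_0^{1,p_s}$) embedding on the annulus, which produces an extra $L^{p_s}$ term to be estimated separately, or equivalently by noting that $(u-k)_+\in\mathcal D_0^{1,p}$ already encodes the vanishing at the inner boundary.
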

\begin{theorem}[Boundedness II]\label{Theorem.local.behaviorII}
		Assume that $\mathrm{(A)}$ holds. Assume in addition that $L^{-s},L^{\frac{q}{q-p}}, |K|^{\frac{q}{q-p}}\in L^1(B(x_0,r_0))$ for some ball $B(x_0,r_0)\subset A_{R_1}^{R_2}$, 
		 $s\in (\frac{N}{p},\infty)\cap [\frac{1}{p-1},\infty)$ and $q\in [p,p_s^\ast).$  Then  for any given $\mu\in (0,1-\frac{q}{p_s^\ast})$, there exists $C=C(\mu,r_0)>0$ such that for any solution $u$ of problem \eqref{1.1} we have $u\in L^q(B(x_0,r_0))\cap L^\infty \big(B\big(x_0,\frac{r_0}{2}\big)\big)$ and 
	\begin{equation} \label{localmainestI}
	\|u\|_{L^\infty\big(B\big(x_0,\frac{r_0}{2}\big)\big)}\leq CM_{L,K}\left(\int_{B(x_0,r_0)}|u|^q\diff x\right)^{\frac{1}{q}}.
	\end{equation}
	Here $$M_{L,K}:=\left(\int_{B(x_0,r_0)}L^{-s}(x)\diff x\right)^{\frac{1}{\mu sp}}\left[\|L\|_{L^{\frac{q}{q-p}}(B(x_0,r_0))}+\|K\|_{L^{\frac{q}{q-p}}(B(x_0,r_0))}\right]^{\frac{1}{\mu p}}.$$ In particular, if $L^{-s},L^{\frac{q}{q-p}}$ and $ |K|^{\frac{q}{q-p}}\in L_{\loc}^1(A_{R_1}^{R_2}),$ then $u \in L_{\loc}^\infty(A_{R_1}^{R_2}).$
\end{theorem}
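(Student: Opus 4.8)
The statement is the special case $f(x,t)=\lambda K(x)|t|^{p-2}t$ of the general boundedness result, Theorem~\ref{apriorimainthm}, which is proved by a De Giorgi iteration; for concreteness I would carry out that iteration directly for \eqref{1.1} on the ball $B:=B(x_0,r_0)$. Its engine is the localized \emph{weighted Sobolev inequality}: for every ball $B'\subseteq B$ and every $w\in\mathcal{D}_0^{1,p}(B';L)$, Hölder's inequality with conjugate exponents $\tfrac{s+1}{s}$ and $s+1$ (note $p_s=\tfrac{ps}{s+1}$ and $L^{-s}\in L^1(B')$) gives $\|\nabla w\|_{L^{p_s}(B')}\le\big(\int_{B'}L^{-s}\diff x\big)^{1/(ps)}\big(\int_{B'}L|\nabla w|^p\diff x\big)^{1/p}$, and the classical Sobolev embedding $W^{1,p_s}_0(B')\hookrightarrow L^{p_s^\ast}(B')$ (with the convention $p_s^\ast=\infty$ when $p_s\ge N$) then yields
\begin{equation*}
\|w\|_{L^{p_s^\ast}(B')}\le C_N\Big(\int_{B'}L^{-s}\diff x\Big)^{1/(ps)}\Big(\int_{B'}L|\nabla w|^p\diff x\Big)^{1/p}.
\end{equation*}
Since $q<p_s^\ast$ and $u\in\mathcal{D}_0^{1,p}(A_{R_1}^{R_2};L)$ has locally finite energy, applying this inequality to $\eta u$ for cut-off functions $\eta$ already yields $u\in L^q(B(x_0,r_0))$; the hypothesis $\mu<1-\tfrac{q}{p_s^\ast}$ is exactly what makes the exponent $m:=\tfrac{q}{1-\mu}$ used below satisfy $q<m<p_s^\ast$.

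Because $-u$ is again a solution of \eqref{1.1} (both $-\Delta_{p,L}$ and $t\mapsto K|t|^{p-2}t$ are odd), it suffices to estimate $\esssup_{B(x_0,r_0/2)}u^+$. I would fix $k\ge0$ and, for concentric balls $B_\rho\subset B_r\subseteq B(x_0,r_0)$, a cut-off $\eta\in C_c^1(B_r)$ with $\eta\equiv1$ on $B_\rho$, $0\le\eta\le1$, $|\nabla\eta|\le\tfrac{2}{r-\rho}$. Testing the weak formulation with $\eta^p(u-k)^+$ (admissible after first truncating $u$ at height $m_0$ and letting $m_0\to\infty$, using $u\in L^q_{\loc}$) and absorbing the cross term by Young's inequality would give, with $A(k):=\{x\in B_r:u(x)>k\}$,
\begin{equation*}
\int_{A(k)}\eta^pL|\nabla u|^p\diff x\le C\left[\frac{1}{(r-\rho)^p}\int_{A(k)}L\big((u-k)^+\big)^p\diff x+\lambda\int_{A(k)}|K|\,|u|^{p-1}\eta^p(u-k)^+\diff x\right].
\end{equation*}
On $A(k)$ one has $u\le(u-k)^++k$, hence $|u|^{p-1}(u-k)^+\le c_p\big(((u-k)^+)^p+k^p\big)$, so Hölder's inequality with exponents $\tfrac{q}{q-p},\tfrac{q}{p}$ and the integrability $L,|K|\in L^{q/(q-p)}(B(x_0,r_0))$ bound the right-hand side by $C(\|L\|_{L^{q/(q-p)}}+\|K\|_{L^{q/(q-p)}})\big[\tfrac{1}{(r-\rho)^p}J(k)^{p/q}+k^pa(k)^{p/q}\big]$, where $J(k):=\int_{A(k)}((u-k)^+)^q\diff x$ and $a(k):=|A(k)|$. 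Feeding this into the weighted Sobolev inequality applied to $\eta(u-k)^+$, and abbreviating $\mathcal M:=\|L^{-s}\|_{L^1(B(x_0,r_0))}^{1/s}(\|L\|_{L^{q/(q-p)}}+\|K\|_{L^{q/(q-p)}})$, I would obtain the gain of integrability
\begin{equation*}
\Big(\int_{A(k)\cap B_\rho}\big((u-k)^+\big)^{p_s^\ast}\diff x\Big)^{p/p_s^\ast}\le C\,\mathcal M\left[\frac{1}{(r-\rho)^p}J(k)^{p/q}+k^pa(k)^{p/q}\right].
\end{equation*}

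The last step is the iteration. Put $k_n:=(2-2^{-n})k_0$, $\rho_n:=(1+2^{-n})\tfrac{r_0}{2}$ (so $k_n\uparrow2k_0$ and $\rho_n\downarrow\tfrac{r_0}{2}$) and $J_n:=\int_{\{u>k_n\}\cap B_{\rho_n}}((u-k_n)^+)^q\diff x$. Using the gain estimate at level $k_{n+1}$ with $(\rho,r)=(\rho_{n+1},\rho_n)$, the Chebyshev bound $|\{u>k_{n+1}\}\cap B_{\rho_n}|\le(k_{n+1}-k_n)^{-q}J_n=(2^{-(n+1)}k_0)^{-q}J_n$, the bound $k_{n+1}\le2k_0$, the monotonicities $(u-k_{n+1})^+\le(u-k_n)^+$ and $B_{\rho_{n+1}}\subset B_{\rho_n}$, the embedding $L^{p_s^\ast}(B)\hookrightarrow L^m(B)$, and Hölder on $B_{\rho_{n+1}}$ with exponents $\tfrac{m}{q},\tfrac{m}{m-q}$, I would arrive at a recursion of the form
\begin{equation*}
J_{n+1}\le C(\mu,r_0)\,\mathcal M^{q/p}\,k_0^{-q\mu}\,b^{\,n}\,J_n^{\,1+\mu}\qquad(b>1),
\end{equation*}
where the term $k^pa(k)^{p/q}$ — present because the right-hand side of \eqref{1.1} has the same $u$-homogeneity $p-1$ as $-\Delta_{p,L}$ — leaves behind \emph{no} surviving power of $k_0$, being of the same order in $J_n$ as the leading term once $k_{n+1}\le 2k_0$ is combined with the Chebyshev bound. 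By the standard fast-geometric-convergence lemma, $J_n\to 0$ (hence $u^+\le 2k_0$ a.e.\ on $B(x_0,r_0/2)$) provided $J_0\le c(\mu,r_0)\mathcal M^{-q/(p\mu)}k_0^q$; since $J_0\le\|u\|_{L^q(B(x_0,r_0))}^q$, the choice $k_0:=c(\mu,r_0)^{-1/q}\mathcal M^{1/(p\mu)}\|u\|_{L^q(B(x_0,r_0))}$ does it, and $2k_0=CM_{L,K}\|u\|_{L^q(B(x_0,r_0))}$ once one notes $\mathcal M^{1/(p\mu)}=\|L^{-s}\|_{L^1(B(x_0,r_0))}^{1/(\mu sp)}(\|L\|_{L^{q/(q-p)}}+\|K\|_{L^{q/(q-p)}})^{1/(\mu p)}$. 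Repeating the argument for the solution $-u$ gives \eqref{localmainestI}, and covering $A_{R_1}^{R_2}$ by such balls yields $u\in L^\infty_{\loc}(A_{R_1}^{R_2})$.

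I expect the main obstacle to be not the qualitative conclusion $u\in L^\infty_{\loc}$ (routine once the weighted Sobolev inequality is available) but the \emph{quantitative} bound: the constant-chasing needed to extract the precise weight factor $M_{L,K}$ with its exponents $\tfrac{1}{\mu sp}$ and $\tfrac{1}{\mu p}$, which forces one to track exactly how $\mu$ (equivalently the exponent $m$) enters the fast-convergence lemma and the resulting power of $\mathcal M$, and to verify the homogeneity cancellation above so that no additive constant appears. A minor additional point is the degenerate case $p_s^\ast=\infty$ (i.e.\ $p_s\ge N$): there one simply works throughout with the finite exponent $m=\tfrac{q}{1-\mu}$ in place of $p_s^\ast$, which is the reason for requiring $\mu$ strictly below the endpoint $1-\tfrac{q}{p_s^\ast}$.
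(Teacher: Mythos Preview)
Your proposal is correct and follows essentially the same De Giorgi iteration as the paper: you derive the same Caccioppoli inequality by testing with $\eta^p(u-k)^+$, feed it into the weighted Sobolev inequality (the paper combines the $L^{p_s^\ast}$ step and the H\"older step into a single embedding into $L^{\bar q}$ with $\bar q=q/(1-\mu)$, but this is the same thing), and obtain the same recursion $J_{n+1}\le C\,\mathcal M^{q/p}k_\ast^{-q\mu}b^nJ_n^{1+\mu}$, from which the fast-convergence lemma (Lemma~\ref{leRecur} with $\delta_1=\delta_2=\mu$) gives exactly your choice of $k_\ast$ and hence the factor $M_{L,K}$. The only cosmetic differences are the parametrization of the levels ($k_n\uparrow k_\ast$ versus $k_n\uparrow 2k_0$) and that the paper cites Proposition~\ref{prop.(u-k)^+} and Lemma~\ref{well-defined} for the admissibility of the test function and for $u\in L^q(B(x_0,r_0))$ rather than the truncation-at-height argument you sketch.
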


%
%

We now discuss certain smoothness properties of eigenfunctions. In the sequel, for an open set $\Omega$ in $\mathbb{R}^N$ we denote by $W^1(\Omega)$ the set of all $u\in L_{\loc}^1(\Omega)$ such that weak derivatives $\frac{\partial u}{\partial x_i}\  (i=1,\cdots, N)$ exist in $\Omega$. We first have the $C^1$ regularity of eigenfunctions in $A_{R_1}^{R_2}.$ 

\begin{theorem} \label{Theorem.C1-regularity1}	Assume that $\mathrm{(A)}$ holds. Assume in addition that $L\in W^1(A_{R_1}^{R_2}),$ $\underset{x\in A_{r_1}^{r_2}}{\essinf}\ L(x)>0$ for any $R_1<r_1<r_2<R_2,$ $L,K\in L_{\loc}^{\frac{q}{q-p}}(A_{R_1}^{R_2})$ for some $q\in [p,p_s^\ast),$ and $|\frac{K}{L}|+|\frac{\nabla L}{L}|^p\in L^{\widetilde{q}}_{\loc}(A_{R_1}^{R_2})$ for some $\widetilde{q}>\frac{Np}{p-1}.$ Then for a (weak) solution  $u$ of \eqref{1.1}, we have $u\in C^1(A_{R_1}^{R_2}).$ 
\end{theorem}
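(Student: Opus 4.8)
The plan is to reduce the weighted equation to a non-weighted quasilinear equation with a right-hand side that lies in a suitable Lebesgue space, and then invoke the classical $C^{1,\alpha}_{\mathrm{loc}}$ regularity theory of DiBenedetto and Tolksdorf. Let me think about how the weighted equation expands formally. On any subdomain $A_{r_1}^{r_2}$ with $R_1<r_1<r_2<R_2$, the assumptions $L\in W^1(A_{R_1}^{R_2})$ and $\operatorname{essinf}_{A_{r_1}^{r_2}} L>0$ mean $L$ is bounded away from $0$ and $\infty$ there. Writing the equation $-\operatorname{div}(L|\nabla u|^{p-2}\nabla u)=\lambda K|u|^{p-2}u$ and expanding the divergence formally gives $-L\operatorname{div}(|\nabla u|^{p-2}\nabla u)-\nabla L\cdot|\nabla u|^{p-2}\nabla u=\lambda K|u|^{p-2}u$, i.e.
\begin{equation*}
-\operatorname{div}(|\nabla u|^{p-2}\nabla u)=\frac{\lambda K}{L}|u|^{p-2}u+\frac{\nabla L}{L}\cdot|\nabla u|^{p-2}\nabla u=:f(x) \quad \text{in } A_{r_1}^{r_2},
\end{equation*}
understood in the weak sense. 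First I would make this rigorous: test the original weak formulation against $v\in C_c^1(A_{r_1}^{r_2})$, and since $L$ is bounded below and in $W^1$, verify that $u$ is a weak solution of the displayed non-weighted equation with the stated $f$.

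The next step is to bootstrap the integrability of $u$ and then of $f$. By Theorem~\ref{Theorem.local.behaviorII} (the ``in particular'' clause applies since $L,K\in L^{q/(q-p)}_{\mathrm{loc}}$ and $L^{-s}\in L^1_{\mathrm{loc}}$), we already have $u\in L^\infty_{\mathrm{loc}}(A_{R_1}^{R_2})$, so $|u|^{p-2}u\in L^\infty_{\mathrm{loc}}$ and hence $\frac{K}{L}|u|^{p-2}u\in L^{\widetilde q}_{\mathrm{loc}}$ because $\frac{K}{L}\in L^{\widetilde q}_{\mathrm{loc}}$. For the gradient term, the strategy is: since $u$ solves a non-weighted $p$-Laplace-type equation with locally bounded $u$, I first get $u\in W^{1,p}_{\mathrm{loc}}$ (it lies in $\mathcal{D}_0^{1,p}(A_{R_1}^{R_2};L)$ and $L$ is locally bounded below, so $\nabla u\in L^p_{\mathrm{loc}}$), and then use higher integrability of the gradient for such equations — the standard Meyers/Gehring-type estimate, or more directly the fact that a bounded weak solution of $-\operatorname{div}(|\nabla u|^{p-2}\nabla u)=g$ with $g\in L^t_{\mathrm{loc}}$, $t>N/p$, has $\nabla u\in L^r_{\mathrm{loc}}$ for every finite $r$ (this follows from the $C^{1,\alpha}$ theory applied iteratively, or a Calderón–Zygmund-type argument). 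Combined with $\frac{\nabla L}{L}\in L^{p\widetilde q}_{\mathrm{loc}}$ (from $|\frac{\nabla L}{L}|^p\in L^{\widetilde q}_{\mathrm{loc}}$) and Hölder's inequality, this gives $\frac{\nabla L}{L}\cdot|\nabla u|^{p-2}\nabla u\in L^{\widetilde q}_{\mathrm{loc}}$. Hence $f\in L^{\widetilde q}_{\mathrm{loc}}(A_{R_1}^{R_2})$ with $\widetilde q>\frac{Np}{p-1}>\frac{N}{p}$... wait, I should be careful: actually what's needed is $\widetilde q > N$ for the final step (or the precise threshold in the regularity theorem). Let me note that $\widetilde q>\frac{Np}{p-1}\ge N$ since $p\ge\frac{p}{p-1}$ is equivalent to $p\ge 2$; for $1<p<2$ one has $\frac{Np}{p-1}>N$ anyway, so $\widetilde q>N$ holds in all cases, which is comfortably enough.

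The final step is to apply the interior $C^{1,\alpha}$ regularity result for the inhomogeneous $p$-Laplacian: a bounded weak solution of $-\operatorname{div}(|\nabla u|^{p-2}\nabla u)=f$ with $f\in L^t_{\mathrm{loc}}$, $t>N$, belongs to $C^{1,\alpha}_{\mathrm{loc}}$ for some $\alpha\in(0,1)$ — this is the classical theorem of DiBenedetto (Nonlinear Anal. 1983) and Tolksdorf (J. Differential Equations 1984), see also Lieberman. Since $r_1,r_2$ with $R_1<r_1<r_2<R_2$ were arbitrary and $A_{R_1}^{R_2}=\bigcup A_{r_1}^{r_2}$, we conclude $u\in C^1(A_{R_1}^{R_2})$, which is the assertion. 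The main obstacle I anticipate is the gradient-integrability bootstrap: making precise that $\nabla u\in L^r_{\mathrm{loc}}$ for $r$ large enough that the Hölder pairing of $\frac{\nabla L}{L}\in L^{p\widetilde q}$ against $|\nabla u|^{p-1}$ lands in $L^{\widetilde q}$ — one needs $\frac{1}{p\widetilde q}+\frac{p-1}{r}\le\frac{1}{\widetilde q}$, i.e. $r\ge\frac{(p-1)p\widetilde q}{p\widetilde q - \widetilde q}=\frac{p(p-1)\widetilde q}{p-1}\cdot\frac{1}{p-1}$... the bookkeeping should be organized as a finite iteration, each step improving $u$ from $W^{1,p_k}$ to $W^{1,p_{k+1}}$ via the a priori estimate for the non-weighted equation, terminating once $f$ is seen to lie in $L^{\widetilde q}_{\mathrm{loc}}$ with $\widetilde q>N$. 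Everything else is a direct citation of known regularity theory.
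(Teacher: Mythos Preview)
Your overall strategy --- rewrite the weighted equation as the unweighted $p$-Laplacian with a nonlinear right-hand side, establish $u\in W^{1,p}_{\loc}\cap L^\infty_{\loc}$ via Corollary~\ref{2.corollary.local_embeddings} and Theorem~\ref{Theorem.local.behaviorII}, and then invoke DiBenedetto's $C^{1,\alpha}_{\loc}$ theory --- matches the paper's proof. The difference lies in how the right-hand side is handled.

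You propose a gradient-integrability bootstrap to force the entire right-hand side $f=\lambda\frac{K}{L}|u|^{p-2}u+\frac{\nabla L}{L}\cdot|\nabla u|^{p-2}\nabla u$ into a fixed space $L^{\widetilde{q}}_{\loc}$, and you correctly flag this as the main obstacle. The paper sidesteps this obstacle entirely: instead of bootstrapping $\nabla u$, one applies Young's inequality to the gradient term,
\[
\left|\frac{\nabla L}{L}\cdot|\nabla u|^{p-2}\nabla u\right|\le\frac{p-1}{p}|\nabla u|^p+\frac{1}{p}\left|\frac{\nabla L}{L}\right|^p,
\]
so the equation takes the \emph{natural growth} form $-\operatorname{div}\vec{a}(x,u,\nabla u)+b(x,u,\nabla u)=0$ with $|b(x,u,\nabla u)|\le \frac{p-1}{p}|\nabla u|^p+\phi(x)$, where $\phi=\big(\lambda\|u\|_{L^\infty(A_{r_1}^{r_2})}^{p-1}+\frac{1}{p}\big)\big(|\frac{K}{L}|+|\frac{\nabla L}{L}|^p\big)\in L^{\widetilde{q}}_{\loc}$, $\widetilde{q}>\frac{Np}{p-1}$. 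DiBenedetto's theorem \cite[Theorem~2 and its Remark]{DiBenedetto} applies directly to this structure for bounded $W^{1,p}$ solutions and yields $C^{1,\alpha}_{\loc}$ in a single step; the exponent condition $\widetilde{q}>\frac{Np}{p-1}$ is precisely the hypothesis required there. Your bootstrap, by contrast, is not fully justified: the implication ``$-\Delta_p u=g$, $g\in L^t_{\loc}$, $t>N/p$, $u$ bounded $\Rightarrow \nabla u\in L^r_{\loc}$ for all $r<\infty$'' is not a standard statement, the first iterate of $f$ need not even lie above the $L^{N/p}$ threshold (since initially $|\nabla u|^{p-1}\in L^{p'}_{\loc}$ only), and making the iteration close would require nontrivial nonlinear Calder\'on--Zygmund estimates. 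The Young-inequality trick makes all of this unnecessary.
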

The next result provides the regularity of eigenfunctions up to the inner boundary.
\begin{theorem} \label{Theorem.C1-regularity2} In addition to the assumptions of  Theorem~\ref{Theorem.C1-regularity1}, we also assume that $\underset{x\in A_{R_1}^{R_1+\epsilon}}{\essinf}\ L(x)>0$, $L,K\in L^{\frac{q}{q-p}}(A_{R_1}^{R_1+\epsilon})$ and $|\frac{K}{L}|+|\frac{\nabla L}{L}|\in L^\infty(A_{R_1}^{R_1+\epsilon})$ for some $\epsilon\in (0,R_2-R_1).$ Then for a (weak) solution  $u$ of \eqref{1.1} and $R\in (R_1,R_2),$  $u\in C^{1,\alpha(R)}(\overline{A_{R_1}^R})$ for some $\alpha(R)\in (0,1).$
\end{theorem}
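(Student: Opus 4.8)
The plan is to combine the interior $C^1$ regularity already available from Theorem~\ref{Theorem.C1-regularity1} with a boundary regularity argument near the inner sphere $\{|x|=R_1\}$, and then patch the two together on the compact set $\overline{A_{R_1}^R}$. Fix $R\in(R_1,R_2)$ and choose $\delta\in(0,\epsilon)$ small enough that $R_1+2\delta<R$. On the outer region $A_{R_1+\delta}^{R}$ we already know $u\in C^1$ by Theorem~\ref{Theorem.C1-regularity1}, and by the local $C^{1,\alpha}$ estimates of Tolksdorf/DiBenedetto/Lieberman for $p$-Laplacian-type operators (applicable because, on this region, $L$ is bounded above and below away from $0$, $L\in W^{1,\infty}_{\loc}$ there by the $L^\infty$ bound on $|\nabla L/L|$, and the right-hand side $\lambda\frac{K}{L}|u|^{p-2}u$ lies in $L^\infty_{\loc}$ since $u$ is locally bounded and $K/L\in L^\infty$), one obtains $u\in C^{1,\alpha_1}(\overline{A_{R_1+\delta/2}^{R}})$ for some $\alpha_1\in(0,1)$. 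The real work is therefore concentrated in a one-sided collar $A_{R_1}^{R_1+\delta}$ adjacent to the boundary.

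Near the inner boundary I would proceed as follows. First, rewrite the equation in non-divergence-friendly form: since $\operatorname{essinf}_{A_{R_1}^{R_1+\epsilon}}L>0$ and $|\nabla L/L|\in L^\infty$, the weighted equation $-\operatorname{div}(L|\nabla u|^{p-2}\nabla u)=\lambda K|u|^{p-2}u$ is equivalent, after dividing by $L$, to
\begin{equation*}
-\operatorname{div}\!\left(|\nabla u|^{p-2}\nabla u\right)=\frac{\nabla L}{L}\cdot|\nabla u|^{p-2}\nabla u+\lambda\frac{K}{L}|u|^{p-2}u\quad\text{in }A_{R_1}^{R_1+\epsilon},
\end{equation*}
so $u$ solves a uniformly controlled quasilinear equation with a bounded drift term and a bounded zeroth-order term (using that $u\in L^\infty(A_{R_1}^{R_1+\epsilon})$ from Theorem~\ref{Theorem.local.behaviorI}, whose hypotheses are implied by those assumed here). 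Next, since $\partial A_{R_1}^{R_2}$ contains the smooth sphere $\{|x|=R_1\}$ and $u=0$ there in the sense of $\mathcal D_0^{1,p}$, I would invoke the global boundary $C^{1,\alpha}$ regularity theory for the $p$-Laplacian with lower-order terms near a $C^{1,\beta}$ (indeed smooth) portion of the boundary — Lieberman's boundary regularity theorem, or the version in Tolksdorf for $C^{1,\alpha}$ boundaries — to conclude $u\in C^{1,\alpha_2}(\overline{A_{R_1}^{R_1+\delta/2}})$ for some $\alpha_2\in(0,1)$. (The hypothesis $L,K\in L^{q/(q-p)}(A_{R_1}^{R_1+\epsilon})$ with $q\in[p,p_s^*)$ is precisely what is needed, together with the earlier boundedness theorems, to guarantee the integrability of the right-hand side required to enter this regularity machinery.)

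Finally I would glue the estimates: with $\alpha(R):=\min\{\alpha_1,\alpha_2\}$, the two covers $\overline{A_{R_1}^{R_1+\delta/2}}$ and $\overline{A_{R_1+\delta/2}^{R}}$ overlap in an open set, $u$ is $C^{1,\alpha(R)}$ on each, and since both pieces are closed and their union is $\overline{A_{R_1}^{R}}$, a standard partition-of-unity / overlap argument yields $u\in C^{1,\alpha(R)}(\overline{A_{R_1}^{R}})$. The main obstacle I anticipate is the boundary step: one must verify carefully that the classical $p$-Laplacian boundary regularity results apply to the \emph{divided} equation — in particular that the drift coefficient $\nabla L/L$ and zeroth-order coefficient $\lambda K/L$ satisfy exactly the structural and integrability hypotheses of the cited theorems near $\{|x|=R_1\}$, and that the homogeneous Dirichlet condition is attained in the classical pointwise sense — whereas the interior part is essentially a reprise of Theorem~\ref{Theorem.C1-regularity1} combined with quoting known local estimates.
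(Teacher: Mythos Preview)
Your proposal is correct and follows essentially the same route as the paper: divide the equation by $L$ to reduce to a standard $p$-Laplacian with bounded drift $\nabla L/L$ and bounded zeroth-order coefficient $\lambda K/L$, use Theorem~\ref{Theorem.local.behaviorI} to obtain $u\in L^\infty$ near the inner boundary, and then invoke Lieberman's boundary $C^{1,\alpha}$ regularity together with the interior regularity of Theorem~\ref{Theorem.C1-regularity1}. The only organizational difference is that the paper sets up a two-sided Dirichlet problem on $A_{R_1}^{R_1+\epsilon}$ (with $C^{1,\alpha}$ data $u$ on the outer sphere supplied by Theorem~\ref{Theorem.C1-regularity1}) and applies Lieberman globally there, whereas you apply Lieberman locally near the inner sphere and glue; both are valid and amount to the same argument.
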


In view of the $C^1$ regularity of eigenfunctions above and the strong maximum principle we have the following result.
\begin{theorem}\label{Theorem.everywhere-positivity}
	Assume that $\mathrm{(A)}$ holds. Assume in addition that  $K\in L^\infty_{\loc}(A_{R_1}^{R_2})$ and $L\in C^1_{\loc}(A_{R_1}^{R_2})$ such that  $\underset{x\in A_{r_1}^{r_2}}{\essinf}\ L(x)>0$ for all $R_1<r_1<r_2<R_2.$  Let $u$ be a nonnegative eigenfunction of \eqref{1.1}. 
Then, $u\in C^1(A_{R_1}^{R_2})$ and $u>0$ everywhere in $A_{R_1}^{R_2}.$
\end{theorem}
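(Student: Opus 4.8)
The plan is to combine the interior $C^1$ regularity already granted by Theorem~\ref{Theorem.C1-regularity1} with a strong maximum principle for the weighted $p$-Laplacian. First I would verify that the hypotheses of Theorem~\ref{Theorem.C1-regularity1} are met: since $L\in C^1_{\loc}(A_{R_1}^{R_2})$ with $\essinf_{A_{r_1}^{r_2}} L>0$ on every compact annulus, we have $L\in W^1(A_{R_1}^{R_2})$, and $|\frac{\nabla L}{L}|$ is locally bounded, hence in $L^{\widetilde q}_{\loc}$ for every $\widetilde q$; likewise $K\in L^\infty_{\loc}$ together with the local lower bound on $L$ gives $|\frac{K}{L}|\in L^\infty_{\loc}\subset L^{\widetilde q}_{\loc}$ and $L,K\in L^{\frac{q}{q-p}}_{\loc}$. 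So Theorem~\ref{Theorem.C1-regularity1} applies and $u\in C^1(A_{R_1}^{R_2})$. This also makes the equation $-\operatorname{div}(L|\nabla u|^{p-2}\nabla u)=\lambda K|u|^{p-2}u$ meaningful in the classical distributional sense with continuous coefficients.

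Next I would reduce to a local statement and rewrite the equation in non-divergence-ready form on a small ball. Fix an arbitrary point $x_0\in A_{R_1}^{R_2}$ and a ball $B=B(x_0,r)\Subset A_{R_1}^{R_2}$ on which $L\in C^1(\overline B)$ and $0<m\le L\le M$. Dividing the equation by $L$ and expanding the divergence formally, $u$ is a weak (hence $C^1$) solution of
\begin{equation*}
-\operatorname{div}(|\nabla u|^{p-2}\nabla u)-\frac{\nabla L}{L}\cdot|\nabla u|^{p-2}\nabla u=\frac{\lambda K}{L}|u|^{p-2}u\quad\text{in }B.
\end{equation*}
Since $\frac{\nabla L}{L}\in C(\overline B)\subset L^\infty(B)$ and $\frac{\lambda K}{L}\in L^\infty(B)$, this is an inequality of the form
\begin{equation*}
-\Delta_p u + b(x)\cdot|\nabla u|^{p-2}\nabla u \ge -c_0\, u^{p-1}\quad\text{in }B,\qquad u\ge 0,
\end{equation*}
with $b\in L^\infty$ and $c_0=\lambda\|K/L\|_{L^\infty(B)}$, i.e. $-\Delta_p u + b\cdot|\nabla u|^{p-2}\nabla u + c_0 u^{p-1}\ge 0$. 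This is exactly the structural setting of the strong maximum principle of V\'azquez (J.\ L.\ V\'azquez, \emph{A strong maximum principle for some quasilinear elliptic equations}, Appl.\ Math.\ Optim.\ 12 (1984), 191--202): a nonnegative $C^1$ supersolution of such an inequality is either identically zero or strictly positive on $B$.

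Then I would run a standard connectedness argument. Let $Z=\{x\in A_{R_1}^{R_2}: u(x)=0\}$ and $U=\{x: u(x)>0\}$; both are relatively closed/open by continuity of $u$, and $A_{R_1}^{R_2}$ is connected. If $Z$ has nonempty interior, pick a ball inside it, and by V\'azquez's theorem applied on any slightly larger ball still inside $A_{R_1}^{R_2}$ on which $u\not\equiv0$ we get a contradiction with $u$ vanishing on an open subset; more directly, V\'azquez's dichotomy on each small ball forces: on every ball $B\Subset A_{R_1}^{R_2}$, either $u\equiv0$ or $u>0$. A covering/connectedness chain then shows that either $u\equiv0$ on all of $A_{R_1}^{R_2}$ or $u>0$ everywhere. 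Since $u$ is an eigenfunction, it is nontrivial, so the first alternative is excluded and $u>0$ everywhere in $A_{R_1}^{R_2}$, completing the proof.

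The main obstacle is ensuring that V\'azquez's strong maximum principle genuinely applies in its stated hypotheses to the perturbed operator $-\Delta_p u + b\cdot|\nabla u|^{p-2}\nabla u + c_0 u^{p-1}$ with merely bounded (not continuous) zeroth-order coefficient and bounded first-order coefficient; I expect this is fine because V\'azquez's formulation only needs the nonlinearity $\beta$ with $\beta(0)=0$ and an appropriate integrability/Osgood-type condition, which $s\mapsto c_0 s^{p-1}$ satisfies, but I would state the precise version being invoked. A secondary point requiring a word of care is that the coefficient $\frac{\nabla L}{L}$ is only continuous, so the rewritten divergence-form equation must be justified distributionally (test against $C_c^\infty(B)$ and integrate by parts using $L\in C^1$), rather than by pointwise expansion of the divergence.
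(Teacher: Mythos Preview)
Your proof is correct and follows the same two-step strategy as the paper: first invoke Theorem~\ref{Theorem.C1-regularity1} to obtain $u\in C^1(A_{R_1}^{R_2})$ (your verification of its hypotheses is accurate), then apply a strong maximum principle to conclude strict positivity.

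The only difference is the choice of strong maximum principle and how it is applied. The paper simply cites \cite[Theorem~8.1]{Pucci-Serrin}, which is formulated for divergence-form operators $\operatorname{div}\mathcal{A}(x,u,\nabla u)$ under structural conditions that the weighted operator $\operatorname{div}(L(x)|\nabla u|^{p-2}\nabla u)$ with $L\in C^1$ locally bounded away from zero satisfies directly; no rewriting is needed. You instead divide through by $L$ to reduce to $-\Delta_p u$ plus a bounded first-order drift and a bounded zeroth-order term, then invoke V\'azquez's result. Both routes work; the Pucci--Serrin citation is more direct because it absorbs the weight into the structural hypotheses and avoids the distributional-justification issue you flagged when expanding the divergence. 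Your concern about V\'azquez's hypotheses is warranted but resolvable: the nonlinearity $\beta(s)=c_0 s^{p-1}$ satisfies the required non-integrability condition $\int_0^1(s\beta(s))^{-1/p}\,ds=\infty$, and the bounded drift term is covered by the comparison argument underlying V\'azquez (or one can simply absorb it by quoting the more general Pucci--Serrin version, which subsumes V\'azquez).
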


Finally, we discuss the decay of the solutions to problem \eqref{1.1} when $|x|\to R_1^+$ or $R_2^-$, that is important to obtain the asymptotic estimates near the boundary. Using the local behavior obtained in Theorem~\ref{Theorem.local.behaviorII} we can obtain the decay of the solutions when $R_2=\infty$ and $L$ is non-degenerate at infinity.

\begin{corollary}\label{Coro.decay.at.infinity.2}
	Assume that $1<p<N,$ $R_2=\infty$ and $\mathrm{(A)}$ holds.   
	Assume in addition that there exists $R\in (R_1,\infty)$ such that $\underset{x \in B_R^c}{\essinf}\ L(x)>0,$  $L,K\in L_{\loc}^{\frac{q}{q-p}}(B_{R}^c)$ for some $q\in [p,p^\ast)$ and 
	\begin{equation*}
	\esssup_{x \in B_R^c} \int_{B(x,r_0)} \bigg[L^{\frac{q}{q-p}}(y)+|K(y)|^{\frac{q}{q-p}}\bigg] \diff y < \infty,
	\end{equation*}
	for some $r_0 \in (0,R-R_1)$. Then, for any solution $u$ to problem \eqref{1.1}, we have $u(x) \to 0$ uniformly as $|x| \to \infty$.
\end{corollary}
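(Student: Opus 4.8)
The plan is to exploit the local $L^\infty$ estimate \eqref{localmainestI} from Theorem~\ref{Theorem.local.behaviorII}, applied uniformly over balls centered in the exterior region $B_R^c$, together with the fact that a solution $u\in\mathcal D_0^{1,p}(A_{R_1}^{R_2};L)$ has finite energy, so its "tail mass" must vanish. Concretely, for $x$ with $|x|$ large we apply Theorem~\ref{Theorem.local.behaviorII} on the ball $B(x,r_0)$ (which lies in $B_{R_1}^c$ once $|x|>R+r_0$, say, so that the hypotheses on $L$, $K$ there hold with constants controlled by the uniform bound $\esssup_{x\in B_R^c}\int_{B(x,r_0)}[L^{q/(q-p)}+|K|^{q/(q-p)}]\,\diff y<\infty$ and by $\essinf_{B_R^c}L>0$). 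This yields
\begin{equation*}
\|u\|_{L^\infty(B(x,r_0/2))}\leq C\left(\int_{B(x,r_0)}|u|^q\diff x\right)^{1/q},
\end{equation*}
with $C$ \emph{independent of} $x$, for all $|x|$ large enough.

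The second step is to show the right-hand side tends to $0$ as $|x|\to\infty$. Since $q\in[p,p^\ast)$ and $u\in\mathcal D_0^{1,p}(A_{R_1}^{R_2};L)$, I would invoke the (classical, unweighted) Sobolev embedding on the fixed-size balls $B(x,r_0)\subset B_{R_1}^c$: because $\essinf_{B_R^c}L>0$, on each such ball $u\in W^{1,p}(B(x,r_0))$ with $\int_{B(x,r_0)}|\nabla u|^p\,\diff y\leq C\int_{B(x,r_0)}L|\nabla u|^p\,\diff y$, hence $\|u\|_{L^q(B(x,r_0))}\leq C\big(\|\nabla u\|_{L^p(B(x,r_0))}+\|u\|_{L^p(B(x,r_0))}\big)$ by the embedding $W^{1,p}\hookrightarrow L^q$ for $q<p^\ast$. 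The gradient term $\int_{B(x,r_0)}|\nabla u|^p\,\diff y\leq C^{-1}\int_{B(x,r_0)}L|\nabla u|^p\,\diff y\to 0$ as $|x|\to\infty$ because $\int_{A_{R_1}^{R_2}}L|\nabla u|^p\,\diff x<\infty$ and the balls escape to infinity. For the lower-order term $\int_{B(x,r_0)}|u|^p\,\diff y$, I would first absorb it: one can either use a Poincaré inequality on $B(x,r_0)$ together with the fact that $u$ is small in some averaged sense, or—cleaner—note that by the Sobolev–Poincaré inequality on balls one actually has $\|u\|_{L^q(B(x,r_0))}\leq C\|\nabla u\|_{L^p(B(x,R'))}$ for a slightly larger concentric ball, since $u$ extends by $0$ outside $A_{R_1}^{R_2}$... but here the balls are interior, so instead I would bootstrap: a first application of Theorem~\ref{Theorem.local.behaviorII} already gives $u\in L^\infty_{\mathrm{loc}}(B_R^c)$, and then feed $\|u\|_{L^p(B(x,r_0))}\le |B(x,r_0)|^{1/p}\|u\|_{L^\infty(B(x,r_0))}$ back; combined with a chained covering argument over annuli, the uniform constant plus the vanishing gradient energy forces $\|u\|_{L^\infty(B(x,r_0/2))}\to 0$.

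Chaining these estimates over a cover of the annulus $\{R'<|x|<2R'\}$ by finitely many such balls (with the number of balls bounded independently of $R'$), one obtains $\sup_{|x|=R'}|u(x)|\to 0$ as $R'\to\infty$, which is exactly the claimed uniform decay. The main obstacle I anticipate is the handling of the lower-order term $\int_{B(x,r_0)}|u|^p\,\diff y$ on the right-hand side: unlike the gradient term it is not directly controlled by the finite energy $\|u\|$, so one genuinely needs either a Poincaré-type inequality valid uniformly on translated balls together with the already-established local boundedness, or an iteration/bootstrap argument that first upgrades to $u\in L^\infty_{\mathrm{loc}}$ and then quantifies the decay via the escaping gradient energy. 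Everything else—verifying the hypotheses of Theorem~\ref{Theorem.local.behaviorII} hold uniformly, the unweighted Sobolev embedding on fixed balls, and the finite covering of dyadic annuli—is routine.
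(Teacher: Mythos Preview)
Your overall two--step strategy is exactly the paper's: apply Theorem~\ref{Theorem.local.behaviorII} on balls $B(x,r_0)\subset B_{R_1}^c$ with constants uniform in $x$ (using $\essinf_{B_R^c}L>0$ to control $\int L^{-s}$, and the hypothesis to control $\|L\|_{L^{q/(q-p)}}+\|K\|_{L^{q/(q-p)}}$), and then show $\int_{B(x,r_0)}|u|^q\diff y\to 0$ as $|x|\to\infty$. The first step is fine; you should also note, as the paper does, that since the hypothesis only gives $q\in[p,p^\ast)$ one must first pick $s$ large so that $q<p_s^\ast<p^\ast$ before invoking Theorem~\ref{Theorem.local.behaviorII}.

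The gap you yourself flag in the second step is real, and the workarounds you sketch do not close it. Local Sobolev--Poincar\'e on $B(x,r_0)$ is unavailable because $u$ does not vanish on $\partial B(x,r_0)$; the bootstrap $\|u\|_{L^p(B(x,r_0))}\le |B|^{1/p}\|u\|_{L^\infty(B(x,r_0))}$ only yields an inequality of the form $\|u\|_{L^\infty(B(x,r_0/2))}\le C\,\varepsilon(|x|)+C'\|u\|_{L^\infty(B(x,r_0))}$ with no reason for $C'<1$, so nothing iterates; and the claim that the annulus $\{R'<|x|<2R'\}$ can be covered by boundedly many balls of fixed radius $r_0$ is false (the count is $\sim (R')^N$).

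The missing idea, which the paper supplies, is to avoid the lower--order term altogether by proving a \emph{global} integrability: $u\in L^{p^\ast}(B_{R+\epsilon}^c)$ for every $\epsilon>0$. Take $\phi\in C^\infty(\mathbb R^N)$ with $\chi_{B_{R+\epsilon}^c}\le\phi\le\chi_{B_R^c}$ and an approximating sequence $u_n\in C_c^1(B_{R_1}^c)$; since $\phi u_n\in C_c^1(\mathbb R^N)$ and $1<p<N$, the Sobolev inequality on $\mathbb R^N$ gives
\[
\bigg(\int_{\mathbb R^N}|\phi u_n|^{p^\ast}\diff x\bigg)^{p/p^\ast}\le C\int_{\mathbb R^N}|\nabla(\phi u_n)|^p\diff x
\]
with \emph{no} $\|u\|_{L^p}$ on the right. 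The right side is controlled by $\alpha_1^{-1}\int_{B_R^c}L|\nabla u_n|^p\diff x+C_\epsilon\int_{A_R^{R+\epsilon}}|u_n|^p\diff x$, both of which are bounded; Fatou's lemma then gives $u\in L^{p^\ast}(B_{R+\epsilon}^c)$. Now $\int_{B(x,r_0)}|u|^q\diff y\le |B(0,r_0)|^{(p^\ast-q)/p^\ast}\big(\int_{B(x,r_0)}|u|^{p^\ast}\diff y\big)^{q/p^\ast}\to 0$ as $|x|\to\infty$, and the uniform local estimate finishes the proof.
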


The decay of solutions when $|x|\to R_1^+$ follows immediately if $u\in C^{1,\alpha}(\overline{A_{R_1}^R})$ for some $R>R_1$ and $\alpha\in (0,1).$

\begin{corollary}\label{Coro.decay.at.R1} Under the assumption of Theorem~\ref{Theorem.C1-regularity2}, for any solution $u$ of \eqref{1.1}, we have $u(x)\to 0$ as $|x|\to R_1^+.$
\end{corollary}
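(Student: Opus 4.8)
The plan is to obtain the corollary as an essentially immediate consequence of Theorem~\ref{Theorem.C1-regularity2}; the only substantive point will be to verify that the now \emph{continuous} boundary values of $u$ on the inner sphere $\Gamma:=\{x\in\mathbb{R}^N:|x|=R_1\}$ actually vanish.

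First I would invoke Theorem~\ref{Theorem.C1-regularity2} with $R:=R_1+\epsilon'$ for some fixed $\epsilon'\in(0,\min\{\epsilon,R_2-R_1\})$, where $\epsilon$ is the number in its hypotheses. This yields a representative of $u$ (still denoted $u$) with $u\in C^{1,\alpha}(\overline{A_{R_1}^{R}})$ for some $\alpha\in(0,1)$; in particular $u$ extends continuously to the compact set $\overline{A_{R_1}^{R}}$, which contains $\Gamma$.

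Next comes the heart of the matter: showing $u\equiv 0$ on $\Gamma$. Put $\Omega:=A_{R_1}^{R}$, a bounded domain with smooth boundary having $\Gamma$ as one of its boundary components, and use the hypothesis $c_0:=\essinf_{x\in A_{R_1}^{R_1+\epsilon}}L(x)>0$, so (shrinking $\epsilon'$ if necessary) $L\geq c_0$ a.e.\ on $\Omega$. Choose $u_n\in C_c^1(A_{R_1}^{R_2})$ with $\|u_n-u\|\to 0$; restricted to $\Omega$ each $u_n$ vanishes near $\Gamma$, and $\nabla u_n\to\nabla u$ in $L^p(\Omega)$ because $\int_\Omega|\nabla u_n-\nabla u|^p\diff x\leq c_0^{-1}\|u_n-u\|^p$. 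A one-dimensional Friedrichs inequality along rays — write $u_n(r\omega)=\int_{R_1}^{r}\partial_s u_n(s\omega)\diff s$ for $\omega\in\mathbb{S}^{N-1}$, apply H\"older in $s$, and integrate over $r$ and $\omega$ — gives $\|w\|_{L^p(\Omega)}\leq C(N,R_1,\epsilon')\|\nabla w\|_{L^p(\Omega)}$ for every $w\in W^{1,p}(\Omega)$ vanishing near $\Gamma$; applied to $u_n-u_m$ it shows $(u_n)$ is Cauchy in $L^p(\Omega)$. Its $L^p(\Omega)$-limit lies in $W^{1,p}(\Omega)$ with weak gradient $\nabla u$, and since in addition $u_n\to u$ in $L^p(A_{R_1}^{R_2};w)$ by Theorem~\ref{2.theorem.embedding} (hence in measure on $\Omega$, as $w>0$ a.e.), that limit must coincide with $u$ a.e.\ on $\Omega$. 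Therefore $u_n\to u$ in $W^{1,p}(\Omega)$, and continuity of the trace operator $W^{1,p}(\Omega)\to L^p(\Gamma)$ together with $\operatorname{tr}_\Gamma u_n=0$ forces $\operatorname{tr}_\Gamma u=0$; since $u\in C(\overline{\Omega})$, its trace on $\Gamma$ agrees with its pointwise restriction, whence $u\equiv 0$ on $\Gamma$.

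Finally I would close the argument by uniform continuity: given $\delta>0$, uniform continuity of $u$ on the compact $\overline{A_{R_1}^{R}}$ produces $\eta>0$ with $|u(x)-u(y)|<\delta$ whenever $|x-y|<\eta$; for $x$ with $R_1<|x|<R_1+\eta$ take $y:=R_1x/|x|\in\Gamma$, so $|x-y|=|x|-R_1<\eta$ and $u(y)=0$, hence $|u(x)|<\delta$. Thus $u(x)\to 0$ as $|x|\to R_1^+$. The only delicate step is the middle one — reconciling the abstract completion defining $\mathcal{D}_{0}^{1,p}(A_{R_1}^{R_2};L)$ with the classical vanishing of the continuous boundary trace — but once the local non-degeneracy $L\geq c_0>0$ near $\Gamma$ is exploited this is routine Sobolev bookkeeping; if a ``zero trace on $\partial A_{R_1}^{R_2}$'' statement for elements of $\mathcal{D}_{0}^{1,p}(A_{R_1}^{R_2};L)$ is already recorded in the preliminaries, this step reduces to a one-line citation.
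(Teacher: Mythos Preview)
Your argument is correct and follows the same overall line as the paper: invoke Theorem~\ref{Theorem.C1-regularity2} to obtain $u\in C^{1,\alpha}(\overline{A_{R_1}^{R}})$, then use continuity up to the inner boundary together with the vanishing of $u$ on $\partial B_{R_1}$.

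The only difference is one of economy. In the paper, the corollary is literally a one-liner: the sentence preceding it states that the decay ``follows immediately if $u\in C^{1,\alpha}(\overline{A_{R_1}^R})$.'' The reason no separate trace argument appears is that the zero boundary value on $\partial B_{R_1}$ is already built into the \emph{proof} of Theorem~\ref{Theorem.C1-regularity2}: there the authors set up the Dirichlet problem with boundary data $\phi=0$ on $\partial B_{R_1}$ and $\phi=u$ on $\partial B_{R_1+\epsilon}$, and then apply Lieberman's boundary regularity result, so the $C^{1,\alpha}$ conclusion is regularity \emph{up to the boundary with value zero} on $\partial B_{R_1}$. Your careful middle paragraph --- using $L\ge c_0>0$ near $\Gamma$ to pass from convergence in $\mathcal{D}_0^{1,p}(A_{R_1}^{R_2};L)$ to convergence in $W^{1,p}(\Omega)$, then invoking trace continuity --- is exactly the Sobolev bookkeeping that justifies writing ``$\phi=0$ on $\partial B_{R_1}$'' in that proof; the paper simply absorbs it there rather than in the corollary. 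Your closing remark anticipated this: with the zero trace already recorded, the corollary reduces to a single citation.
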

Next, we draw our attention to prove asymptotic behavior of a $C^1$ radially symmetric solution $u(x)=u(|x|)$ and its gradient to equation 
\begin{equation}\label{1.2}
-\operatorname{div}\left( v(|x|) |\nabla u| ^{p-2}\nabla u\right)=\lambda w(|x|)|u|^{p-2}u \quad \text{in } A_{R_1}^{R_2},
\end{equation}
as $|x|\to R_1^+$ 
or $|x|\to R_2^-$ if $u(x)\to 0$ as $|x|\to R_1^+$ and $|x|\to R_2^-$. We assume
  \begin{itemize}
  	\item[($\mathrm{W}$)]  $v,w$  are positive a.e. in $(R_1,R_2)$ such that $v$ (resp. $w$) is continuous (resp. measurable) in $(R_1,R_2)$ satisfying $v^{-\frac{1}{p-1}}\in L^1_{\loc}(R_1,R_2)$ (resp. $w\in L^1_{\loc}(R_1,R_2)$).
  \end{itemize} 
Note that a similar problem in the case of a ball $B_R$ ($0<R\leq\infty$) was investigated in \cite{Dra-Kuf-Nic}. We write $u(R_1)=\lim_{r\to R_1^+}u(r)$ and $u(R_2)=\lim_{r\to R_2^-}u(r).$ Clearly, if $u(x)=u(|x|)\in C^1(A_{R_1}^{R_2})$ is a radially symmetric solution to problem \eqref{1.2} with $u(x)\to 0$ as $|x|\to R_1^+$ and $|x|\to R_2^-$ , then $u\in C^1(R_1,R_2)$ satisfies
\begin{equation}\label{ode}
-\left(\rho(r)|u'(r)|^{p-2}u'(r)\right)'=\lambda \sigma(r)|u(r)|^{p-2}u(r)\quad \text{in}\ (R_1,R_2)
\end{equation}  
and $u(R_1)=u(R_2)=0$. In two Theorems \ref{radialsoldecayleft} and \ref{radialsoldecayright}, we show that if the conditions on weights are made stronger than (A) near $R_1$ and  $R_2$ (see Remark~\ref{Rmk.non-oscillation}) then solutions obey certain decay properties.
Namely, we assume 
\begin{itemize}
	\item[$\mathrm{(A_{\epsilon,L})}$] \ \  there exists $\xi \in (R_1,R_2)$ such that $\rho^{1-p'}\in L^1(R_1;\xi)$, and there exist $\epsilon\in (0,p-1)$ and $C>0$ such that
	\begin{equation*}
	\left(\int_{r}^{\xi}\sigma(\tau)\diff \tau \right)\left(\int_{R_1}^{r}\rho^{1-p'}(\tau)\diff \tau\right)^{\epsilon}<C,\quad \forall r\in(R_1,\xi);
	\end{equation*}
\end{itemize} 
\begin{itemize}
	\item[$\mathrm{(A_{\epsilon,R})}$]\ \  there exists $\xi \in (R_1,R_2)$ such that $\rho^{1-p'}\in L^1(\xi,R_2)$, and there exist $\epsilon\in (0,p-1)$ and $C>0$ such that
	\begin{equation*}
	\left(\int_{\xi}^{r}\sigma(\tau)\diff \tau \right)\left(\int_{r}^{R_2}\rho^{1-p'}(\tau)\diff \tau\right)^{\epsilon}<C,\quad \forall r\in(\xi,R_2).
	\end{equation*}
\end{itemize} 

\begin{theorem} \label{radialsoldecayleft}
	Assume that $\mathrm{(W)}$ and $\mathrm{\big(A_{\epsilon,L}\big)}$ hold. Then for a radially symmetric solution $u(x)=u(|x|)\in C^1(A_{R_1}^{R_2})$ to problem \eqref{1.2} satisfying $u(R_1)=u(R_2)=0,$ there exist $a\in(R_1,R_2)$ and $0<C_1<C_2,0<\widetilde{C}_1<\widetilde{C}_2$ such that
\begin{equation} \label{estextsol2}
	C_1\int_{R_1}^{r}\rho^{1-p'}(\tau)\diff \tau\leq |u(r)|\leq C_2\int_{R_1}^{r}\rho^{1-p'}(\tau)\diff \tau,\quad \forall r\in (R_1,a), \end{equation}
and 
\begin{equation}\label{estextsolder2}
	\widetilde{C}_1\rho^{1-p'}(r)\leq |u'(r)|\leq \widetilde{C}_2\rho^{1-p'}(r),\quad \forall r\in (R_1,a). 
\end{equation}
\end{theorem}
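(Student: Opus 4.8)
The plan is to pass to the singular ODE \eqref{ode}, exploit the monotonicity that its right-hand side forces, and then feed $\mathrm{(A_{\epsilon,L})}$ into a finite iteration to pin down the behaviour of $u'$ at $R_1$.

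I would first localise. Observe that $\mathrm{(A_{\epsilon,L})}$ forces $\big(\int_{R_1}^r\rho^{1-p'}\big)^{p-1}\int_r^\xi\sigma\le C\big(\int_{R_1}^r\rho^{1-p'}\big)^{p-1-\epsilon}\to0$ as $r\to R_1^+$, a Hille--Nehari non-oscillation condition for \eqref{ode} near $R_1$ (cf.\ Remark~\ref{Rmk.non-oscillation}); hence for a nontrivial $u$ there is $a_0\in(R_1,\xi)$ with $u\ne0$ on $(R_1,a_0)$, and, replacing $u$ by $-u$ if necessary, we may assume $u>0$ there. Set $m(r):=\int_{R_1}^r\rho^{1-p'}$ (finite on $(R_1,\xi]$ by $\mathrm{(A_{\epsilon,L})}$, with $m(R_1^+)=0$) and $P(r):=\rho(r)|u'(r)|^{p-2}u'(r)$. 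Since $u\in C^1(R_1,R_2)$ and $\sigma|u|^{p-1}\in L^1_{\loc}(R_1,R_2)$, \eqref{ode} says that $P$ is locally absolutely continuous with $P'=-\lambda\sigma u^{p-1}<0$ a.e.\ on $(R_1,a_0)$; thus $P$ is strictly decreasing there, and since $u(R_1^+)=0<u$ there is a point where $u'>0$, so by monotonicity of $P$ there is $a\in(R_1,a_0]$ with $P>0$ and $u'>0$ on $(R_1,a)$. Put $g:=P^{1/(p-1)}=\rho^{1/(p-1)}u'$, a positive strictly decreasing function on $(R_1,a)$, so that $u'=g\,\rho^{1-p'}$ and, using $u(R_1^+)=0$, $u(r)=\int_{R_1}^r g\,\rho^{1-p'}$.

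Since $g$ is decreasing, $u(r)\ge g(r)m(r)$, so $v:=u/m$ satisfies $v'=\rho^{1-p'}(gm-u)/m^2<0$ a.e.; hence $v$ is strictly decreasing on $(R_1,a)$, and this already gives the lower bounds in \eqref{estextsol2} and \eqref{estextsolder2}: $u(r)=v(r)m(r)\ge v(a)m(r)$ and $u'(r)=g(r)\rho^{1-p'}(r)\ge g(a)\rho^{1-p'}(r)$, with $C_1:=v(a)>0$ and $\widetilde C_1:=g(a)>0$. The heart of the matter is to prove that $P_0:=\lim_{r\to R_1^+}P(r)$ — which equals $\sup_{r\in(R_1,a)}P(r)$ since $P$ decreases — is finite; equivalently, that $\int_{R_1}^a\sigma u^{p-1}<\infty$, because $P(r)=P(a)+\lambda\int_r^a\sigma u^{p-1}$. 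Here $\mathrm{(A_{\epsilon,L})}$ enters only through the tail bound $\int_r^\xi\sigma\le Cm(r)^{-\epsilon}$, from which one integration by parts (using $a\le\xi$ and $m(R_1^+)=0$) gives $\int_r^a\sigma\,m^{\gamma}\le C_\gamma\big(1+m(r)^{\gamma-\epsilon}\big)$ for all $\gamma\ge0$ and $r\in(R_1,a)$, with an extra factor $1+|\log m(r)|$ in the borderline case $\gamma=\epsilon$; in particular the left side is bounded in $r$ once $\gamma>\epsilon$. Now bootstrap an a priori bound $u(r)\le c\,m(r)^{\theta}$ on $(R_1,a)$: it holds with $\theta=0$ (take $c=\max_{[R_1,a]}u$), and if it holds for some $\theta\ge0$ with $(p-1)\theta\le\epsilon$, then inserting $u^{p-1}\le c^{p-1}m^{(p-1)\theta}$ into $P(r)=P(a)+\lambda\int_r^a\sigma u^{p-1}$ and using the above estimate yields $P(r)\le C'm(r)^{(p-1)\theta-\epsilon}$ (up to the harmless $|\log|$); hence $u'(r)=P(r)^{1/(p-1)}\rho^{1-p'}(r)\le C''m(r)^{\theta-\frac{\epsilon}{p-1}}m'(r)$, and, the exponent exceeding $-1$ because $\epsilon<p-1$ and $\theta\ge0$, integrating over $(R_1,r)$ gives $u(r)\le C'''m(r)^{\theta+\frac{p-1-\epsilon}{p-1}}$. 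Each round thus raises $\theta$ by the fixed amount $\tfrac{p-1-\epsilon}{p-1}>0$, so after finitely many rounds $(p-1)\theta>\epsilon$; a final insertion, now using the boundedness of $\int_r^a\sigma m^{(p-1)\theta}$, gives $P(r)\le P(a)+\lambda C'<\infty$ uniformly in $r$, i.e.\ $P_0<\infty$. (The borderline $(p-1)\theta=\epsilon$ costs at most one extra round, since then $u\le c_\eta m^{1-\eta}$ for every $\eta\in(0,1)$ and $(p-1)(1-\eta)>\epsilon$ for $\eta$ small.)

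Once $P_0<\infty$, the bound $g=P^{1/(p-1)}\le P_0^{1/(p-1)}$ on $(R_1,a)$ gives at once $u'(r)\le P_0^{1/(p-1)}\rho^{1-p'}(r)$ and $u(r)=\int_{R_1}^r g\,\rho^{1-p'}\le P_0^{1/(p-1)}m(r)$, the upper bounds with $C_2:=\widetilde C_2:=P_0^{1/(p-1)}$; the strict inequalities $C_1<C_2$ and $\widetilde C_1<\widetilde C_2$ hold because $v$ and $g$ are \emph{strictly} decreasing, so $v(a)<v(R_1^+)=g(R_1^+)=P_0^{1/(p-1)}$ and $g(a)<g(R_1^+)=P_0^{1/(p-1)}$. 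Replacing $u$ by $|u|$ undoes the sign normalisation and produces \eqref{estextsol2} and \eqref{estextsolder2}. The one genuine obstacle is the bootstrap: $\mathrm{(A_{\epsilon,L})}$ provides only smallness ``of order $\epsilon$'' in $m$, whereas the sharp decay $|u|\asymp m$, $|u'|\asymp\rho^{1-p'}$ requires order $p-1$, so one is forced to iterate, with the borderline exponent $(p-1)\theta=\epsilon$ needing a bit of extra care.
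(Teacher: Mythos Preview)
Your argument is correct and rests on the same core mechanism as the paper's: a finite bootstrap iteration, driven by $\mathrm{(A_{\epsilon,L})}$, that upgrades a crude bound on $u$ near $R_1$ to the sharp two-sided estimate $|u|\asymp m:=\int_{R_1}^r\rho^{1-p'}$. The organisation, however, is genuinely different. The paper writes the integral representation $u(r)=\lambda^{1/(p-1)}\int_{R_1}^r\rho^{1-p'}\big(\int_t^{\tilde a}\sigma u^{p-1}\big)^{1/(p-1)}\,dt$ and iterates it through nested quantities $I_n$, proving inductively that $I_n(t)\lesssim\big(\int_t^{\tilde\xi}\sigma\big)^{1/p-(n-1)(p-1-\epsilon)/\epsilon}$ until the exponent turns negative; the derivative estimate is then extracted separately at the end. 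You instead make the flux $P=\rho|u'|^{p-2}u'$ the protagonist: its strict monotonicity immediately yields the lower bounds via the decreasing auxiliaries $g=P^{1/(p-1)}$ and $v=u/m$, and the entire upper bound reduces to the single scalar statement $P_0:=P(R_1^+)<\infty$, reached by iterating $u\le c\,m^{\theta}$ with $\theta$ increasing by the fixed amount $(p-1-\epsilon)/(p-1)$ each round. Your route is somewhat more transparent about what is actually being proved and handles the borderline exponent directly with a log, whereas the paper perturbs $\epsilon$ (condition \eqref{estextsol8}) to avoid it; the paper's formulation, on the other hand, is closer to the style of earlier half-linear Sturm--Liouville asymptotics it builds on.
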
 

\begin{theorem}\label{radialsoldecayright}
	Assume that $\mathrm{(W)}$ and $\mathrm{(A_{\epsilon,R})}$ hold. Then for a radially symmetric solution $u(x)=u(|x|)\in C^1(A_{R_1}^{R_2})$ to problem \eqref{1.2} satisfying $u(R_1)=u(R_2)=0,$ there exist $b\in(R_1,R_2)$ and $0<C_1<C_2,0<\widetilde{C}_1<\widetilde{C}_2$ such that
	$$C_1\int_{r}^{R_2}\rho^{1-p'}(\tau)\diff \tau\leq |u(r)|\leq C_2\int_{r}^{R_2}\rho^{1-p'}(\tau)\diff \tau,\quad \forall r\in (b,R_2),$$
	and 
	$$\widetilde{C}_1\rho^{1-p'}(r)\leq |u'(r)|\leq \widetilde{C}_2\rho^{1-p'}(r),\quad \forall r\in (b,R_2).$$
\end{theorem}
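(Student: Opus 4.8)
The plan is to mirror the proof of Theorem~\ref{radialsoldecayleft}, working near $R_2$ instead of near $R_1$, so the argument is essentially symmetric under the reflection $r\mapsto$ (reversing orientation on $(R_1,R_2)$). First I would reduce to the ODE: by the discussion preceding the statement, $u\in C^1(R_1,R_2)$ solves \eqref{ode} on $(R_1,R_2)$ with $u(R_1)=u(R_2)=0$. If $u\equiv 0$ there is nothing to prove, so assume $u\not\equiv 0$. The key structural fact is that, under $\mathrm{(W)}$, the map $r\mapsto \rho(r)|u'(r)|^{p-2}u'(r)=:\varphi(r)$ is $C^1$ and, by \eqref{ode}, $\varphi'(r)=-\lambda\sigma(r)|u(r)|^{p-2}u(r)$. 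The aim is to show that near $R_2$ the solution is non-oscillatory: there is $b\in(R_1,R_2)$ such that $u$ does not vanish on $(b,R_2)$ and $u'$ does not vanish on $(b,R_2)$; WLOG (replacing $u$ by $-u$) we may take $u>0$ on $(b,R_2)$.

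The heart of the matter is the non-oscillation claim near $R_2$, and this is where condition $\mathrm{(A_{\epsilon,R})}$ enters. I would argue by contradiction: if $u$ had a sequence of zeros $r_k\uparrow R_2$, then on each nodal interval $(r_k,r_{k+1})$ the function $|u|$ attains an interior maximum at some $s_k$ with $u'(s_k)=0$, and integrating \eqref{ode} from $s_k$ one gets, for $r\in(s_k,r_{k+1})$,
\[
\rho(r)|u'(r)|^{p-1}=\lambda\int_{r}^{s_k}\sigma(\tau)|u(\tau)|^{p-1}\diff\tau
\le \lambda\,|u(s_k)|^{p-1}\int_{r}^{R_2}\sigma(\tau)\diff\tau,
\]
hence $|u'(r)|\le \big(\lambda\int_r^{R_2}\sigma\big)^{1/(p-1)}|u(s_k)|\,\rho^{1-p'}(r)$, and integrating this from $r_k$ (or from a fixed $\xi<s_k$) up to $s_k$, together with $\mathrm{(A_{\epsilon,R})}$ which bounds $\big(\int_\xi^r\sigma\big)\big(\int_r^{R_2}\rho^{1-p'}\big)^\epsilon$, one derives a contradiction with the existence of infinitely many zeros accumulating at $R_2$ — essentially because $\mathrm{(A_{\epsilon,R})}$ forces $\int_r^{R_2}\sigma(\tau)\,\big(\int_\tau^{R_2}\rho^{1-p'}\big)^{p-1}\diff\tau$ to be summable in a way incompatible with oscillation, a weighted Sturm-type comparison. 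The precise bookkeeping is the main obstacle: one must carefully track the constant $C$ and the exponent $\epsilon<p-1$ through the iterated estimates, and I expect the paper to isolate this in a preliminary non-oscillation lemma (alluded to in Remark~\ref{Rmk.non-oscillation}) which Theorem~\ref{radialsoldecayleft} also invokes.

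Once non-oscillation near $R_2$ is established, i.e. $u>0$ and, say, $u'<0$ on $(b,R_2)$ (the sign of $u'$ is forced once $u>0$ and $u(R_2)=0$, after possibly enlarging $b$, because $\varphi$ is monotone on a one-signed nodal piece by \eqref{ode}), the two-sided estimates follow by direct integration. From $\varphi'(r)=-\lambda\sigma(r)u(r)^{p-1}<0$, $\varphi$ is decreasing on $(b,R_2)$; since $\varphi<0$ there and $\varphi$ is bounded, $\ell:=\lim_{r\to R_2^-}\varphi(r)\in[-\infty,0)$ exists, and in fact $\ell$ is finite and negative — finiteness because $\int_b^{R_2}\sigma(\tau)u(\tau)^{p-1}\diff\tau<\infty$ (using $u$ bounded and $\mathrm{(A_{\epsilon,R})}\Rightarrow\int^{R_2}\sigma<\infty$ after comparing with $\rho^{1-p'}\in L^1(\xi,R_2)$, or more directly from $\mathrm{(A_{\epsilon,R})}$ with $r\to R_2^-$), and strict negativity because $\varphi$ is strictly decreasing and $\varphi(b)<0$. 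Therefore
\[
|u'(r)|=\big(-\varphi(r)\big)^{1/(p-1)}\rho^{1-p'}(r),\qquad -\varphi(r)\in[-\varphi(b),\,-\ell]\subset(0,\infty)\ \text{ bounded away from }0\text{ and }\infty,
\]
which gives the gradient estimate with $\widetilde C_1=(-\ell)^{1/(p-1)}$ wait — rather $\widetilde C_1=(-\varphi(b))^{1/(p-1)}$ is wrong in sign of monotonicity; since $\varphi$ decreases, $-\varphi$ increases, so $-\varphi(r)\in[-\varphi(b),-\ell]$, giving $\widetilde C_1=(-\varphi(b))^{1/(p-1)}$ and $\widetilde C_2=(-\ell)^{1/(p-1)}$ (if $\ell=-\infty$ one first shrinks to $(b',R_2)$ — but finiteness of $\ell$ was just argued). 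Finally, integrating $u'(r)=-(-\varphi(r))^{1/(p-1)}\rho^{1-p'}(r)$ from $r$ to $R_2$ and using $u(R_2)=0$,
\[
u(r)=\int_{r}^{R_2}(-\varphi(\tau))^{1/(p-1)}\rho^{1-p'}(\tau)\diff\tau,
\]
so $\widetilde C_1\int_r^{R_2}\rho^{1-p'}\le u(r)\le \widetilde C_2\int_r^{R_2}\rho^{1-p'}$, i.e. the first pair of inequalities with $C_1=\widetilde C_1$, $C_2=\widetilde C_2$ (the statement allows any such constants). Collecting the absolute values handles the case $u<0$ by symmetry.

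In summary, the proof decomposes into: (i) passing to the radial ODE \eqref{ode}; (ii) a non-oscillation lemma near $R_2$ under $\mathrm{(A_{\epsilon,R})}$, which is the crux; (iii) on the resulting one-signed interval, analyzing the monotone "flux" $\varphi=\rho|u'|^{p-2}u'$ to get finite nonzero boundary limit; (iv) integrating to obtain the two-sided bounds on $u$ and $u'$. The only genuinely delicate step is (ii); steps (iii)-(iv) are a short integration once the sign structure is fixed. I would present (ii) either by invoking the non-oscillation lemma used for Theorem~\ref{radialsoldecayleft} with the roles of the endpoints swapped, or by a self-contained Sturm-comparison argument as sketched.
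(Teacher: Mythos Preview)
Your structural decomposition (i)--(iv) is exactly how the paper proceeds, and your guess about step (ii) is right: the paper isolates it as a separate non-oscillation lemma (Lemma~\ref{non-oscillation.R}), proved via a weighted Hardy inequality rather than the Sturm-type contradiction you sketched. The lower bounds in (iii)--(iv) also go through essentially as you wrote.

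The gap is in your treatment of the \emph{upper} bounds. Your claim that $\mathrm{(A_{\epsilon,R})}\Rightarrow\int^{R_2}\sigma<\infty$ is false: the condition only says $\int_\xi^r\sigma\le C\big(\int_r^{R_2}\rho^{1-p'}\big)^{-\epsilon}$, and since $\int_r^{R_2}\rho^{1-p'}\to 0$ as $r\to R_2^-$, the right-hand side blows up and places no bound on $\int_\xi^{R_2}\sigma$. (The paper even exhibits weights with $\int\sigma=\infty$ in Remark~\ref{2.Rmk.compare_with_(OK)}.) Consequently you have not shown $\int_b^{R_2}\sigma(\tau)u(\tau)^{p-1}\diff\tau<\infty$, and the flux limit $\ell$ may a priori be $-\infty$; your bounds $\widetilde C_2$ and $C_2$ are then undefined. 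Note that finiteness of $\int\sigma u^{p-1}$ is in fact \emph{equivalent} to the upper bound you want on $|u'|$, so the argument is circular unless you obtain the upper bound on $u$ first by other means.

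This is precisely where the paper's work lies, and it is step (iii), not (ii), that is ``genuinely delicate''. Mirroring the proof of Theorem~\ref{radialsoldecayleft}, one starts from the integral representation
\[
u(r)=\lambda^{\frac{1}{p-1}}\int_r^{R_2}\rho^{1-p'}(t)\Big(\int_{\tilde b}^t\sigma(\tau)u^{p-1}(\tau)\diff\tau\Big)^{\frac{1}{p-1}}\diff t,
\]
applies H\"older to extract a factor $\big(\int_{\tilde b}^t\sigma\big)^{1/p}$ using only the finite quantity $\int\sigma u^p$, and then \emph{iterates}: plugging the resulting bound on $u$ back into the representation yields successively sharper controls $I_n(t)$. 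Under $\mathrm{(A_{\epsilon,R})}$ one shows that after $n_0$ steps, where $n_0$ is the smallest integer with $\frac{1}{p}-(n_0-1)\frac{p-1-\epsilon}{\epsilon}<0$, the iterate $I_{n_0}$ is uniformly bounded on $(\tilde b,R_2)$; this gives $u(r)\le C_2\int_r^{R_2}\rho^{1-p'}$, and only then does the upper bound on $|u'|$ (and, a posteriori, finiteness of your $\ell$) follow.
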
 

The rest of the paper is organized as follows. In Section~\ref{Sec.Preliminaries},
we obtain some useful embeddings of the weighted Sobolev spaces into weighted Lebesgue spaces defined earlier. In Section~\ref{Sec.EigenvalueProblem}, we prove the existence of the least positive eigenvalue and the corresponding 
positive eigenfunction associated to problem \eqref{1.1}. The simplicity of such an eigenvalue is also discussed in this section. 
  Section~\ref{Sec.Boundedness} deals with boundedness, smoothness and decay of solutions to problem \eqref{1.1}. Section~\ref{Sec.Behavior} is devoted to the investigation of the behavior of $u(x)$ and $\nabla u(x)$ as $|x|\to R_1^+$ or $R_2^-,$ in the case of radially symmetric solutions. Finally, we provide a few concrete examples of weights $L$ and $K$ to illustrate our results in Section~\ref{Sec.Applications}.


\section {Weighted spaces}\label{Sec.Preliminaries}
In this section we will obtain embeddings of certain weighted spaces and other properties. 
In what follows denote by $S_1$ the unit sphere $\{x\in\mathbb{R}^N: |x|=1\}$ and for a function $u$ defined on $A_{R_1}^{R_2}$, 
we write $u(x)=u(r,\omega),$ where $r=|x|$ and $\omega=x/r.$ First, we prove the following continuous embedding.
\begin{theorem}\label{2.theorem.embedding}
	Assume that {\rm(A)} holds. Then, we have the following embedding
	$$\mathcal{D}_{0}^{1,p}(A_{R_1}^{R_2};L) \hookrightarrow L^p(A_{R_1}^{R_2};w).$$
\end{theorem}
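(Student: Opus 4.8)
The plan is to reduce the claimed embedding to a one-dimensional weighted Hardy inequality on $(R_1,R_2)$ by passing to polar coordinates, and to prove that inequality using the fundamental theorem of calculus applied from \emph{both} endpoints together with H\"older's inequality. Since $\mathcal{D}_{0}^{1,p}(A_{R_1}^{R_2};L)$ is by definition the completion of $C_c^1(A_{R_1}^{R_2})$ under $\|\cdot\|$, it suffices to produce a constant $C>0$ (independent of $u$) with
\begin{equation*}
\int_{A_{R_1}^{R_2}} w(|x|)\,|u|^p\diff x \;\le\; C\int_{A_{R_1}^{R_2}} L(x)\,|\nabla u|^p\diff x \qquad\text{for all } u\in C_c^1(A_{R_1}^{R_2}),
\end{equation*}
and then to extend by density. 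Writing $u(x)=u(r,\omega)$ with $r=|x|$, $\omega=x/r$, and recalling $\rho(r)=r^{N-1}v(r)$ and $\sigma(r)=r^{N-1}w(r)$, the left-hand side equals $\int_{S_1}\int_{R_1}^{R_2}\sigma(r)\,|u(r,\omega)|^p\diff r\diff\omega$, whereas the right-hand side is at least $\int_{S_1}\int_{R_1}^{R_2}\rho(r)\,|\partial_r u(r,\omega)|^p\diff r\diff\omega$ because $L(x)\ge v(|x|)$ by condition (ii) in $\mathrm{(A)}$ and $|\partial_r u|=|\nabla u\cdot\omega|\le|\nabla u|$. Hence it is enough to establish, uniformly in $\omega$, the one-dimensional estimate
\begin{equation*}
\int_{R_1}^{R_2}\sigma(r)\,|f(r)|^p\diff r \;\le\; \Big(\int_{R_1}^{R_2}P(r)\sigma(r)\diff r\Big)\int_{R_1}^{R_2}\rho(r)\,|f'(r)|^p\diff r
\end{equation*}
for $f:=u(\cdot,\omega)\in C_c^1(R_1,R_2)$, and then integrate over $\omega\in S_1$; the constant is finite by condition (i) in $\mathrm{(A)}$ and is independent of $\omega$, so this will give the claim with $C=\int_{R_1}^{R_2}P(r)\sigma(r)\diff r$.

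For the one-dimensional estimate, since $f$ has compact support in $(R_1,R_2)$ one has $f(r)=\int_{R_1}^r f'=-\int_r^{R_2} f'$ for every $r\in(R_1,R_2)$. Applying H\"older's inequality with exponents $p$ and $p'$ to each of these two representations --- writing $|f'|=(\rho^{1/p}|f'|)\,\rho^{-1/p}$ and using $\rho^{-p'/p}=\rho^{1-p'}$ and $p/p'=p-1$ --- yields
\begin{equation*}
|f(r)|^p\le\Big(\int_{R_1}^{R_2}\rho\,|f'|^p\diff\tau\Big)\Big(\int_{R_1}^{r}\rho^{1-p'}\diff\tau\Big)^{p-1}
\quad\text{and}\quad
|f(r)|^p\le\Big(\int_{R_1}^{R_2}\rho\,|f'|^p\diff\tau\Big)\Big(\int_{r}^{R_2}\rho^{1-p'}\diff\tau\Big)^{p-1}.
\end{equation*}
Taking the minimum of the two right-hand sides gives exactly $|f(r)|^p\le P(r)\int_{R_1}^{R_2}\rho\,|f'|^p\diff\tau$, with $P(r)<\infty$ for each $r$ by condition (i) in $\mathrm{(A)}$; multiplying by $\sigma(r)$ and integrating over $(R_1,R_2)$ then produces the displayed inequality. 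This two-sided use of the fundamental theorem of calculus, which is precisely what conjures up the truncated quantity $P(r)$ of hypothesis $\mathrm{(A)}$, is the heart of the argument; the rest is routine.

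The only mildly technical point is the passage to the completion: the bounded inclusion $C_c^1(A_{R_1}^{R_2})\to L^p(A_{R_1}^{R_2};w)$ extends uniquely to a bounded linear map on $\mathcal{D}_{0}^{1,p}(A_{R_1}^{R_2};L)$, and to see that this map is injective --- so that $\mathcal{D}_{0}^{1,p}(A_{R_1}^{R_2};L)$ is genuinely a space of functions on which the embedding acts in the obvious way --- I would invoke $\mathrm{(A)}$ once more to obtain local bounds. Indeed, on any shell $A':=\{a\le|x|\le b\}\Subset A_{R_1}^{R_2}$ H\"older gives $\int_{A'}|\nabla u|\diff x\le\big(\int_{A'}L^{1-p'}\diff x\big)^{1/p'}\|u\|$ with $\int_{A'}L^{1-p'}\diff x\le\int_{A'}v(|x|)^{1-p'}\diff x<\infty$ since $v^{-1/(p-1)}\in L^1_{\loc}(R_1,R_2)$, and the pointwise bound $|f(r)|\le P(r)^{1/p}\,\|\rho^{1/p}f'\|_{L^p(R_1,R_2)}$ together with the local boundedness of $P$ gives a corresponding local $L^1$ bound on $u$ itself; hence a $\|\cdot\|$-Cauchy sequence in $C_c^1(A_{R_1}^{R_2})$ is Cauchy in $W^{1,1}_{\loc}(A_{R_1}^{R_2})$ and converges to a function whose weak gradient is the $L^p(A_{R_1}^{R_2};L)$-limit of the gradients. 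I do not expect any genuine obstacle in the proof.
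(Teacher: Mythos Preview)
Your proof is correct and follows essentially the same approach as the paper: both reduce to polar coordinates, apply the fundamental theorem of calculus from each endpoint together with H\"older's inequality to obtain the pointwise bound $|u(r,\omega)|^p\le P(r)\int_{R_1}^{R_2}\rho\,|\partial_r u|^p\diff\tau$, multiply by $\sigma(r)$ and integrate, then extend by density. Your discussion of injectivity of the extended map (so that $\mathcal{D}_0^{1,p}$ is genuinely a function space) is an extra point the paper does not explicitly address.
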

\begin{proof}
Let $u\in C_c^1(A_{R_1}^{R_2})$ and $r\in (R_1,R_2).$ If $\int_{R_1}^{r}\rho^{1-p'}(\tau)\diff \tau<\infty,$ using H\"older's inequality we estimate
\begin{align*}
|u(r,\omega)|&=\left|\int_{R_1}^{r}\frac{\partial u}{\partial \tau}(\tau,\omega)d\tau\right|=\left|\int_{R_1}^{r}\rho^{-\frac{1}{p}}(\tau)\tau^{\frac{N-1}{p}}v^{\frac{1}{p}}(\tau)\frac{\partial u}{\partial \tau}(\tau,\omega)\diff \tau\right|\\
&\leq \left(\int_{R_1}^{r}\rho^{1-p'}(\tau)\diff \tau\right)^{\frac{1}{p'}}\left(\int_{R_1}^{R_2}\tau^{N-1}v(\tau)\left|\frac{\partial u}{\partial \tau}(\tau,\omega)\right|^p\diff \tau\right)^{\frac{1}{p}}.
\end{align*}
Hence, 
	$$|u(r,\omega)|^p\leq \left(\int_{R_1}^{r}\rho^{1-p'}(\tau)\diff \tau\right)^{p-1}\left(\int_{R_1}^{R_2}\tau^{N-1}v(\tau)\left|\frac{\partial u}{\partial \tau}(\tau,\omega)\right|^p\diff \tau\right).$$
Analogously, if $\int_{r}^{R_2}\rho^{1-p'}(\tau)\diff\tau<\infty,$ we have
$$|u(r,\omega)|^p\leq \left(\int_{r}^{R_2}\rho^{1-p'}(\tau)\diff \tau\right)^{p-1}\left(\int_{R_1}^{R_2}\tau^{N-1}v(\tau)\left|\frac{\partial u}{\partial \tau}(\tau,\omega)\right|^p\diff \tau\right).$$ 	
In either case, we obtain
$$|u(r,\omega)|^p\leq P(r)\int_{R_1}^{R_2}\tau^{N-1}v(\tau)\left|\frac{\partial u}{\partial \tau}(\tau,\omega)\right|^p\diff \tau.$$ 	
Hence,
\begin{align*}
\int_{S_1}|u(r,\omega)|^p\diff \omega\leq P(r)&\int_{S_1}\int_{R_1}^{R_2}\tau^{N-1}v(\tau)\left|\frac{\partial u}{\partial \tau}(\tau,\omega)\right|^p\diff \tau \diff \omega\\
&=P(r)\int_{A_{R_1}^{R_2}}v(|x|)|\nabla u(x)|^pdx.
\end{align*}
Combining this with the assumption $\mathrm{(A)}$ (ii),  we get
\begin{equation}\label{2.theorem1.est.u}
\int_{S_1}|u(r,\omega)|^p\diff \omega\leq \|u\|^pP(r),\quad \forall r\in (R_1,R_2)\  \text{and}\ \forall u\in C_c^1(A_{R_1}^{R_2}).
\end{equation}
From this we deduce

$$\int_{R_1}^{R_2}r^{N-1}w(r)\int_{S_1}|u(r,\omega)|^p\diff \omega \diff r\leq \|u\|^p\int_{R_1}^{R_2}r^{N-1}w(r)P(r)\diff r.$$
That is,
\begin{equation}\label{2.theorem1.est.norm}
\|u\|_{L^p\left(A_{R_1}^{R_2};w\right)}\leq C\|u\|,\quad \forall u\in C_c^1(A_{R_1}^{R_2}),
\end{equation}
where $C:=\left(\int_{R_1}^{R_2}P(r)\sigma(r)\diff r\right)^{\frac{1}{p}}.$ By the density of $C_c^1(A_{R_1}^{R_2})$ in $\mathcal{D}_{0}^{1,p}(A_{R_1}^{R_2};L)$ we obtain \eqref{2.theorem1.est.norm} for all $u\in \mathcal{D}_{0}^{1,p}(A_{R_1}^{R_2};L)$ and it infers the continuity of the embedding. 
\end{proof}

In what follows, for a normed space $(X,\|\cdot\|_X)$ of functions $u: \Omega\to \mathbb{R}$ with $\Omega\subseteq A_{R_1}^{R_2}$ such that $u|_\Omega\in X$ for all $u\in \mathcal{D}_{0}^{1,p}(A_{R_1}^{R_2};L)$, we still denote $\mathcal{D}_{0}^{1,p}(A_{R_1}^{R_2};L) \hookrightarrow X$ if there is a constant $C>0$ such that
$$\|u|_\Omega\|_X\leq C\|u\|,\quad \forall u\in \mathcal{D}_{0}^{1,p}(A_{R_1}^{R_2};L).$$
In fact such an embedding is not an injective map. In this sense the following embeddings are deduced from Theorem~\ref{2.theorem.embedding}

\begin{corollary}\label{2.corollary.local_embeddings}
	Assume that the weight $L$ satisfies
	\begin{itemize}
		\item[(A1)] $L(x) \geq v(|x|)>0$ for a.e. $x \in A_{R_1}^{R_2}$, where $v$ is measurable in $(R_1,R_2)$ such that $v, v^{-\frac{1}{p-1}}\in L^1_{\loc}(R_1,R_2)$ and $P(r)<\infty$ for all $r\in (R_1,R_2),$ where $P$ is defined as in {\rm(A)}.
	\end{itemize}
For any given $R_1<r_1<r_2<R_2$, the following embeddings hold:
	\begin{itemize}
		\item[(i)] $\mathcal{D}_{0}^{1,p}(A_{R_1}^{R_2};L)\hookrightarrow L^p(A_{r_1}^{r_2});$
		\item[(ii)] $\mathcal{D}_{0}^{1,p}(A_{R_1}^{R_2};L)\hookrightarrow W^{1,p_s}(A_{r_1}^{r_2})$ if $L^{-s} \in L^1(A_{r_1}^{r_2})$ for some $s\in (\frac{N}{p},\infty)\cap [\frac{1}{p-1},\infty);$  
		\item[(iii)] $\mathcal{D}_{0}^{1,p}(A_{R_1}^{R_2};L)\hookrightarrow W^{1,p}(A_{r_1}^{r_2})$ if $\underset{x\in A_{r_1}^{r_2}}{\essinf}\ L(x)>0.$
	\end{itemize}
\end{corollary}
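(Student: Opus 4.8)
The plan is to deduce all three embeddings from Theorem~\ref{2.theorem.embedding} together with standard Sobolev embedding theory on the bounded annulus $A_{r_1}^{r_2}$, exploiting that $P(r)$ is bounded below by a positive constant on any compact subinterval $[r_1,r_2]\subset(R_1,R_2)$. For part~(i), I would first observe that assumption (A1) provides exactly the ingredients needed to run the proof of Theorem~\ref{2.theorem.embedding} with $w$ replaced by the characteristic function of $(r_1,r_2)$ (or, more simply, to apply the inequality \eqref{2.theorem1.est.u} directly): since $v^{-\frac1{p-1}}\in L^1_{\loc}(R_1,R_2)$ we have $P(r)<\infty$ for each $r$, and $P$ is finite and (being a minimum of two monotone functions, one nondecreasing and one nonincreasing) bounded on $[r_1,r_2]$ by $M:=\max_{r\in[r_1,r_2]}P(r)<\infty$. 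Integrating \eqref{2.theorem1.est.u} against $r^{N-1}$ over $(r_1,r_2)$ gives $\int_{A_{r_1}^{r_2}}|u|^p\diff x \le \|u\|^p \int_{r_1}^{r_2} r^{N-1} P(r)\diff r \le \|u\|^p\, M\, \frac{r_2^N-r_1^N}{N}$ for all $u\in C_c^1(A_{R_1}^{R_2})$, and then density gives the bound for all $u\in\mathcal{D}_0^{1,p}(A_{R_1}^{R_2};L)$. This proves (i).

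For part~(iii), the hypothesis $c_0:=\essinf_{A_{r_1}^{r_2}}L>0$ gives $\int_{A_{r_1}^{r_2}}|\nabla u|^p\diff x \le c_0^{-1}\int_{A_{r_1}^{r_2}}L|\nabla u|^p\diff x \le c_0^{-1}\|u\|^p$, which controls the gradient part of the $W^{1,p}(A_{r_1}^{r_2})$ norm; combined with (i) for the function part, this yields $\mathcal{D}_0^{1,p}(A_{R_1}^{R_2};L)\hookrightarrow W^{1,p}(A_{r_1}^{r_2})$. For part~(ii), I would control $\nabla u$ in $L^{p_s}(A_{r_1}^{r_2})$ via Hölder's inequality with the conjugate pair $\big(\frac{p}{p_s},\frac{p}{p-p_s}\big)$: writing $|\nabla u|^{p_s}=\big(L|\nabla u|^p\big)^{p_s/p}\,L^{-p_s/p}$ and noting that $p_s/p=\frac{s}{s+1}$ so that the exponent on $L^{-1}$ in the second factor works out to $s$ after taking the conjugate exponent, one gets $\|\nabla u\|_{L^{p_s}(A_{r_1}^{r_2})}\le \|u\|^{?}\,\|L^{-1}\|_{L^{s}(A_{r_1}^{r_2})}^{?}$ with the indicated finite factor since $L^{-s}\in L^1(A_{r_1}^{r_2})$; together with the bound on $u$ from (i) (note $p\ge p_s$ and $A_{r_1}^{r_2}$ is bounded, so $L^p\hookrightarrow L^{p_s}$ there), this gives $\mathcal{D}_0^{1,p}(A_{R_1}^{R_2};L)\hookrightarrow W^{1,p_s}(A_{r_1}^{r_2})$.

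I do not expect any serious obstacle here; the corollary is essentially a packaging of Theorem~\ref{2.theorem.embedding} with elementary Hölder estimates on a bounded domain. The only points requiring a little care are: verifying that the constants produced are genuinely independent of $u$ (they depend only on $r_1,r_2,N,p,s$ and the relevant norms of $v$ or $L^{-1}$), keeping track of the precise exponents in the Hölder splitting for (ii) so that the power of $L^{-1}$ is exactly $s$, and recalling that the "embedding'' in (ii)–(iii) is understood in the (non-injective) restriction sense introduced in the paragraph preceding the corollary, so no injectivity needs to be checked. The density of $C_c^1(A_{R_1}^{R_2})$ in $\mathcal{D}_0^{1,p}(A_{R_1}^{R_2};L)$, which is built into the definition of the space, is what lets all the pointwise/test-function estimates pass to the whole space.
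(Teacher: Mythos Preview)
Your proposal is correct and essentially matches the paper's proof. For parts (ii) and (iii) your argument is identical to the paper's; for part (i) you integrate the pointwise estimate \eqref{2.theorem1.est.u} against $r^{N-1}$ over $(r_1,r_2)$ directly, whereas the paper instead constructs the auxiliary weight $w(r)=P^{-1}(r)(r+1)^{-(N+1)}$ so as to invoke Theorem~\ref{2.theorem.embedding} as a black box and then observes that this $w$ is bounded below on $(r_1,r_2)$---but both routes rest on the same bound $P(r)\le C_1$ on $[r_1,r_2]$ and are equivalent in substance.
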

\begin{proof}
	(i) Let $R_1<r_1<r_2<R_2.$  Set $w(r)=P^{-1}(r)(r+1)^{-(N+1)}$ for $r\in (R_1,R_2).$  Then, $w \in L^1_{\mathrm{loc}}(R_1,R_2)$ and we also have 
	$$\int_{R_1}^{R_2}P(r)\sigma(r)\diff r=\int_{R_1}^{R_2}\frac{r^{N-1}}{(r+1)^{N+1}}\diff r<\infty.$$
	From this and the hypothesis $\mathrm{(A_1)}$, we see that (A) holds.
 Thus, applying Theorem~\ref{2.theorem.embedding}, we obtain 
\begin{equation}\label{proof.loc.emb1}
\mathcal{D}_{0}^{1,p}(A_{R_1}^{R_2};L) \hookrightarrow L^p(A_{R_1}^{R_2};w).
\end{equation}	
It is easy to see that, for all $r\in (r_1,r_2),$ we have
$$0< P(r)\leq \min\left\{\left(\int_{R_1}^{r_2}\rho^{1-p'}(\tau)\diff \tau\right)^{p-1},\left(\int_{r_1}^{R_2}\rho^{1-p'}(\tau)\diff \tau\right)^{p-1}\right\}=:C_1<\infty.$$	
Thus,
$$w(r)\geq C_1^{-1}(r_2+1)^{-(N+1)}=:C_2>0,\quad \forall r\in(r_1,r_2),$$	
and hence, 
$$\|u\|_{L^p(A_{r_1}^{r_2})}\leq C_2^{-1/p}\|u\|_{L^p(A_{R_1}^{R_2};w)}, \forall u\in L^p(A_{R_1}^{R_2};w).$$
From this and \eqref{proof.loc.emb1}, it follows $\mathcal{D}_{0}^{1,p}(A_{R_1}^{R_2};L) \hookrightarrow L^p(A_{r_1}^{r_2}).$

\noindent (ii) Let $R_1<r_1<r_2<R_2.$ For $u\in \mathcal{D}_{0}^{1,p}(A_{R_1}^{R_2};L)$ we have
$$\int_{A_{r_1}^{r_2}}|\nabla u|^{p_s}\diff x
\leq \left(\int_{A_{r_1}^{r_2}}L^{-s}(x)\diff x\right)^{\frac{1}{s+1}}\left(\int_{A_{r_1}^{r_2}}L(x)|\nabla u|^p\diff x\right)^{\frac{s}{s+1}}.$$
From this and (i) we deduce the conclusion.

\noindent (iii) The conclusion can be deduced from (i) and the assumption on $L.$
\end{proof}
Next, we show the following compact embedding.
\begin{theorem}\label{2.theorem.compact_embedding}
	Assume that $\mathrm{(A)}$ holds and $L^{-s} \in L^1_{\loc}(A_{R_1}^{R_2})$ for some $s\in (\frac{N}{p},\infty)\cap \big[\frac{1}{p-1},\infty \big).$ We have the following compact embedding
	$$\mathcal{D}_{0}^{1,p}(A_{R_1}^{R_2};L) \hookrightarrow\hookrightarrow L^p(A_{R_1}^{R_2};w).$$
\end{theorem}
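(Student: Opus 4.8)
The plan is to combine the pointwise estimate \eqref{2.theorem1.est.u} underlying Theorem~\ref{2.theorem.embedding}, which controls the mass of a function near the two ends $r=R_1$ and $r=R_2$, with an interior Rellich--Kondrachov argument on compact annuli that handles the bulk. Let $(u_n)$ be a bounded sequence in $\mathcal{D}_0^{1,p}(A_{R_1}^{R_2};L)$. Since this space is uniformly convex, hence reflexive, after passing to a subsequence we may assume $u_n\rightharpoonup u$ weakly for some $u\in\mathcal{D}_0^{1,p}(A_{R_1}^{R_2};L)$, and we set $C_0:=\sup_n\|u_n-u\|<\infty$. Fix annuli $A_{a_k}^{b_k}$ exhausting $A_{R_1}^{R_2}$, that is $R_1<a_{k+1}<a_k$ with $a_k\downarrow R_1$ and $b_k<b_{k+1}<R_2$ with $b_k\uparrow R_2$. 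We will show that $u_n\to u$ in $L^p(A_{R_1}^{R_2};w)$ along a further subsequence, which is exactly the asserted compactness.

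\emph{Interior compactness.} On each $A_{a_k}^{b_k}$, inequality \eqref{2.theorem1.est.u} (in which $P$ is bounded below on $(a_k,b_k)$) gives $\|v\|_{L^p(A_{a_k}^{b_k})}\le C_k\|v\|$ for every $v\in\mathcal{D}_0^{1,p}(A_{R_1}^{R_2};L)$, while H\"older's inequality, exactly as in the proof of Corollary~\ref{2.corollary.local_embeddings}(ii) and using $L^{-s}\in L^1_{\loc}(A_{R_1}^{R_2})$, gives $\|\nabla v\|_{L^{p_s}(A_{a_k}^{b_k})}\le C_k\|v\|$; note that $p_s\ge 1$ because $s\ge\frac1{p-1}$. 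Hence $(u_n)$ is bounded in $W^{1,p_s}(A_{a_k}^{b_k})$, and since $s>\frac Np$ ensures $p<p_s^{\ast}$, the Rellich--Kondrachov theorem on the bounded Lipschitz domain $A_{a_k}^{b_k}$ gives $W^{1,p_s}(A_{a_k}^{b_k})\hookrightarrow\hookrightarrow L^p(A_{a_k}^{b_k})$. A diagonal argument over $k$ then extracts a single subsequence, still denoted $(u_n)$, such that $u_n\to u$ in $L^p(A_{a_k}^{b_k})$ for every $k$.

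\emph{The two ends and the bulk.} Since \eqref{2.theorem1.est.u} extends to all of $\mathcal{D}_0^{1,p}(A_{R_1}^{R_2};L)$ by density, applying it to $u_n-u$ and integrating against $\sigma$ over $(R_1,a_k)\cup(b_k,R_2)$ yields
\[
\int_{A_{R_1}^{a_k}\cup A_{b_k}^{R_2}}w(|x|)\,|u_n-u|^p\diff x\le C_0^p\,\Bigl(\int_{R_1}^{a_k}+\int_{b_k}^{R_2}\Bigr)P(r)\sigma(r)\diff r,
\]
which is independent of $n$ and tends to $0$ as $k\to\infty$ because $\int_{R_1}^{R_2}P\sigma<\infty$ by assumption $\mathrm{(A)}$. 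For the bulk term on a fixed $A_{a_k}^{b_k}$, write $w=\min(w,M)+(w-M)^+$ for a parameter $M>0$: the contribution of $\min(w,M)$ is $\le M\,\|u_n-u\|_{L^p(A_{a_k}^{b_k})}^p\to 0$ by the interior step, while, with $E_{M,k}:=\{r\in(a_k,b_k):w(r)>M\}$ and using \eqref{2.theorem1.est.u} once more,
\[
\int_{A_{a_k}^{b_k}}(w-M)^+|u_n-u|^p\diff x\le\int_{E_{M,k}}r^{N-1}w(r)\int_{S_1}|u_n-u|^p\diff\omega\,\diff r\le C_0^p\int_{E_{M,k}}P(r)\sigma(r)\diff r,
\]
which is again independent of $n$; since $w\in L^1(a_k,b_k)$ we have $|E_{M,k}|\to 0$ as $M\to\infty$, so absolute continuity of the integral of $P\sigma\in L^1(a_k,b_k)$ makes this last quantity arbitrarily small for $M$ large. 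Patching the three bounds together by a standard three-$\varepsilon$ argument (first choose $k$, then $M$, then $n$) gives $u_n\to u$ in $L^p(A_{R_1}^{R_2};w)$.

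The one genuinely delicate point is the bulk term: since $w$ is only assumed to be in $L^1_{\loc}$, one cannot directly estimate $\int_{A_{a_k}^{b_k}}w|u_n-u|^p$ by H\"older against the strongly convergent $L^p$ (or $L^q$) sequence; the truncation at height $M$ separates the bounded part of $w$, where interior compactness applies at once, from its unbounded part, which is controlled by the uniform spherical bound \eqref{2.theorem1.est.u} and the integrability $\int_{R_1}^{R_2}P\sigma<\infty$. Everything else --- the reflexivity extraction, the interior Rellich compactness, and the two-end estimate --- is routine.
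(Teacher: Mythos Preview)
Your proof is correct and rests on the same two pillars as the paper's: the spherical estimate \eqref{2.theorem1.est.u} for uniform control and interior compactness on compact annuli. The organization differs. The paper approximates $P\sigma\in L^1(R_1,R_2)$ by a single $g_\epsilon\in C_c^1(R_1,R_2)$ and sets $w_\epsilon:=P^{-1}r^{1-N}g_\epsilon$; then \eqref{2.theorem1.est.u} handles the entire error $|w-w_\epsilon|$ in one stroke (this simultaneously absorbs the tails near $R_1,R_2$ \emph{and} the local unboundedness of $w$), while the remaining $w_\epsilon$-integral is bounded and compactly supported, so reduces immediately to interior compactness. Your argument achieves the same effect in two separate passes --- exhaustion by $A_{a_k}^{b_k}$ for the tails, then truncation of $w$ at height $M$ for the local singularities --- which is slightly longer but perhaps more transparent. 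For the interior step the paper quotes the weighted embedding $W^{1,p}(A_{r_1}^{r_2};L)\hookrightarrow\hookrightarrow L^p(A_{r_1}^{r_2})$ from \cite{Dra-Kuf-Nic}, whereas your direct use of Rellich--Kondrachov on $W^{1,p_s}$ (via $L^{-s}\in L^1_{\loc}$ and $p<p_s^\ast$) is equivalent and more self-contained. One small slip: to deduce $\|v\|_{L^p(A_{a_k}^{b_k})}\le C_k\|v\|$ from \eqref{2.theorem1.est.u} you need $P$ bounded \emph{above} on $(a_k,b_k)$, not below; this holds since $P(r)\le\min\big\{(\int_{R_1}^{b_k}\rho^{1-p'})^{p-1},(\int_{a_k}^{R_2}\rho^{1-p'})^{p-1}\big\}<\infty$.
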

\begin{proof}
Let $u_n\rightharpoonup 0$ in $\mathcal{D}_{0}^{1,p}(A_{R_1}^{R_2};L)$ as $n\to \infty.$ We will show that $u_n\to 0$ in $L^p(A_{R_1}^{R_2};w)$ as $n\to \infty.$ To this end we will show that for any $\epsilon>0$, there exists $n_\epsilon\in\mathbb{N}$ such that
\begin{equation}\label{2.limit}
\int_{A_{R_1}^{R_2}}w(|x|)|u_n|^p\diff x<\epsilon^p,\quad \forall n\geq n_\epsilon.
\end{equation}
Without loss of generality we may assume that $\{u_n\}\subset C_c^1(A_{R_1}^{R_2})$ and $\|u_n\|\leq 1$ for all $n\in\mathbb{N}.$  Since $P(r)r^{N-1}w(r)\in L^1(R_1,R_2),$ there exists $g_\epsilon\in C^1_c(R_1,R_2)$ such that
\begin{equation*}
\int_{R_1}^{R_2}|g_\epsilon(r)-P(r)r^{N-1}w(r)|\diff r<\frac{\epsilon^p}{2}.
\end{equation*}
Set $w_\epsilon(r):=P^{-1}(r)r^{1-N}g_\epsilon(r)$ for all $r\in (R_1,R_2).$ Applying \eqref{2.theorem1.est.u} and noticing $\|u_n\|\leq 1$, we estimate
\begin{align}\label{2.theorem2.est.un}
\int_{A_{R_1}^{R_2}}\left|(w-w_\epsilon)(|x|)\right||u_n|^p\diff x&= \int_{R_1}^{R_2}\left|r^{N-1}w(r)-r^{N-1}w_\epsilon(r)\right|\int_{S_1}|u_n(r,\omega)|^p\diff \omega\diff r\notag\\
& \leq \int_{R_1}^{R_2}\left|P(r)r^{N-1}w(r)-g_\epsilon(r)\right|\diff r\notag\\
&<\frac{\epsilon^p}{2},\quad \forall n\in\mathbb{N}.
\end{align}
Let $R_1<r_1<r_2<R_2$ such that $\operatorname{supp}(g_\epsilon)\subset (r_1,r_2).$ Then for a.e. $x\in  A_{r_1}^{r_2},$ we have
$$|w_\epsilon(|x|)|\leq C^{-1}_{r_1r_2} r_1^{1-N}\|g_\epsilon\|_{L^\infty(R_1,R_2)}=:M_\epsilon,$$
where $C_{r_1r_2}:=\min\left\{\left(\int_{R_1}^{r_1}\rho^{1-p'}(\tau)\diff \tau\right)^{p-1},\left(\int_{r_2}^{R_2}\rho^{1-p'}(\tau)\diff \tau\right)^{p-1}\right\}>0.$ Thus, we infer
\begin{equation}\label{2.theorem.est2.un}
\int_{A_{R_1}^{R_2}}\left|w_\epsilon(|x|)\right||u_n|^p\diff x= \int_{A_{r_1}^{r_2}}\left|w_\epsilon(|x|)\right||u_n|^p\diff x\leq M_\epsilon \int_{A_{r_1}^{r_2}}|u_n|^p\diff x,\quad \forall n\in\mathbb{N}.
\end{equation}
By $\mathrm{(A)},$ we have $L^{-\frac{1}{p-1}}\in L^1_{\loc}(A_{R_1}^{R_2})$ and note that this condition guarantees that $\mathcal{D}_{0}^{1,p}(A_{R_1}^{R_2};L)\subset W^1(A_{R_1}^{R_2}).$ By this and the embedding  $\mathcal{D}_{0}^{1,p}(A_{R_1}^{R_2};L)
\hookrightarrow L^p(A_{r_1}^{r_2})$ (see Corollary~\ref{2.corollary.local_embeddings} (i)) we have
\begin{equation}\label{2.theorem.loc.embedding}
\mathcal{D}_{0}^{1,p}(A_{R_1}^{R_2};L)\hookrightarrow W^{1,p}(A_{r_1}^{r_2};L),
\end{equation}
where $W^{1,p}(A_{r_1}^{r_2};L):=\big\{u\in W^1(A_{r_1}^{r_2}): \int_{A_{r_1}^{r_2}}\big[|u|^p+L(x)|\nabla u|^p\big]\diff x<\infty\big\}$ endowed with the norm
$$\|u\|_{W^{1,p}(A_{r_1}^{r_2};L)}:=\left(\int_{A_{r_1}^{r_2}}\big[|u|^p+L(x)|\nabla u|^p\big]\diff x\right)^{\frac{1}{p}}.$$ 
Since $L^{-s}\in L^1(A_{r_1}^{r_2})$ for some $s\in (\frac{N}{p},\infty)\cap [\frac{1}{p-1},\infty),$ we may apply a compact embedding result for weighted Sobolev spaces in \cite[p. 26]{Dra-Kuf-Nic} to obtain
\begin{equation}\label{2.theorem.loc.commpact.embedding}
W^{1,p}(A_{r_1}^{r_2};L)\hookrightarrow\hookrightarrow L^p(A_{r_1}^{r_2}).
\end{equation}
By \eqref{2.theorem.loc.embedding}, we have that $u_n|_{A_{r_1}^{r_2}}\rightharpoonup 0$ in $W^{1,p}(A_{r_1}^{r_2};L)$ as $n\to\infty.$ Combining this with \eqref{2.theorem.loc.commpact.embedding} we get
$u_n|_{A_{r_1}^{r_2}}\to 0$ in $L^p(A_{r_1}^{r_2})$ as $n\to\infty.$ Hence, there exists $n_\epsilon\in\mathbb{N}$ such that
$$M_\epsilon \int_{A_{r_1}^{r_2}}|u_n|^p\diff x<\frac{\epsilon^p}{2},\quad \forall n\geq n_\epsilon.$$
From this and \eqref{2.theorem.est2.un}  we obtain

\begin{equation*}
	\int_{A_{R_1}^{R_2}}\left|w_\epsilon(|x|)\right||u_n|^p\diff x<\frac{\epsilon^p}{2},\quad \forall n\geq n_\epsilon.
\end{equation*}
Finally, combining the last estimate and \eqref{2.theorem2.est.un} we obtain \eqref{2.limit}. Since $\epsilon>0$ was chosen arbitrarily, we get $u_n\to 0$ in $L^p(A_{R_1}^{R_2};w)$ as $n\to \infty$ and the proof is complete. 
\end{proof}
We now present several explicit 
consequences of Theorem~\ref{2.theorem.compact_embedding}. In the next two corollaries, we apply Theorem~\ref{2.theorem.compact_embedding} for $L(x)=v(|x|)$ and write $\mathcal{D}_{0}^{1,p}(A_{R_1}^{R_2};v)$ instead of $\mathcal{D}_{0}^{1,p}(A_{R_1}^{R_2};L).$ As in the assumption $\mathrm{(A)}$, we always denote $\rho(r):=r^{N-1}v(r)$ and $\sigma(r):=r^{N-1}w(r).$
\begin{corollary}  
Let $v,w$ be measurable and positive a.e. in $(R_1,R_2)$ such that $v,v^{-s}\in L^1_{\loc}(R_1,R_2)$ for some $s\in (\frac{N}{p},\infty)\cap [\frac{1}{p-1},\infty)$ and one of the following conditions holds true:
\begin{itemize}
	\item[(I)] there exists $\xi \in (R_1,R_2)$ such that $\int_{\xi}^{R_2}\rho^{1-p'}(r)\diff r <\int_{R_1}^{\xi}\rho^{1-p'}(r)\diff r=\infty $ and   
	$$	\int_{R_1}^{R_2}\left[\int_{r}^{R_2}\rho^{1-p'}(\tau)\diff \tau\right]^{p-1}\sigma(r)\diff r<\infty;$$
	\item[(II)] there exists $\xi \in (R_1,R_2)$ such that $\int_{R_1}^{\xi}\rho^{1-p'}(r)\diff r< \int_{\xi}^{R_2}\rho^{1-p'}(r)\diff r=\infty $ and  
	$$	\int_{R_1}^{R_2}\left[\int_{R_1}^{r}\rho^{1-p'}(\tau)\diff \tau\right]^{p-1}\sigma(r)\diff r<\infty;$$
	\item[(III)] there exists $\xi \in (R_1,R_2)$ such that $\int_{R_1}^{R_2}\rho^{1-p'}(r)\diff r<\infty$ and  
	$$	\int_{R_1}^{\xi}\left[\int_{R_1}^{r}\rho^{1-p'}(\tau)\diff \tau\right]^{p-1}\sigma(r)\diff r+\int_{\xi}^{R_2}\left[\int_{r}^{R_2}\rho^{1-p'}(\tau)\diff \tau\right]^{p-1}\sigma(r)\diff r<\infty.$$
\end{itemize}
Then the following compact embedding holds
	$$\mathcal{D}_{0}^{1,p}(A_{R_1}^{R_2};v) \hookrightarrow\hookrightarrow L^p(A_{R_1}^{R_2};w).$$
\end{corollary}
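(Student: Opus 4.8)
The plan is to deduce this Corollary directly from Theorem~\ref{2.theorem.compact_embedding} by verifying that in each of the three cases (I), (II), (III) the hypotheses of that theorem are satisfied with $L(x)=v(|x|)$. Since the integrability assumption $L^{-s}=v^{-s}\in L^1_{\loc}(A_{R_1}^{R_2})$ is equivalent to $v^{-s}\in L^1_{\loc}(R_1,R_2)$ (by the change to polar coordinates, using that $r^{N-1}$ is bounded above and below on compact subintervals of $(R_1,R_2)$), that part is immediate from the standing hypothesis on $v$. So the whole task reduces to checking that condition $\mathrm{(A)}$ holds, i.e. that $P(r)<\infty$ for every $r\in(R_1,R_2)$ and that $\int_{R_1}^{R_2}P(r)\sigma(r)\diff r<\infty$, where $P(r)=\min\{(\int_{R_1}^{r}\rho^{1-p'})^{p-1},(\int_{r}^{R_2}\rho^{1-p'})^{p-1}\}$.

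First I would treat case (III). Here $\int_{R_1}^{R_2}\rho^{1-p'}(\tau)\diff\tau<\infty$, so both of the two quantities inside the minimum defining $P(r)$ are finite for every $r$, hence $P(r)<\infty$. For the weighted integrability, I split $\int_{R_1}^{R_2}P(r)\sigma(r)\diff r$ at $\xi$: on $(R_1,\xi)$ I bound $P(r)\le (\int_{R_1}^{r}\rho^{1-p'})^{p-1}$ and on $(\xi,R_2)$ I bound $P(r)\le(\int_{r}^{R_2}\rho^{1-p'})^{p-1}$; the sum of these two integrals is exactly the finite quantity assumed in (III). Next, case (I): since $\int_{R_1}^{\xi}\rho^{1-p'}=\infty$ while $\int_{\xi}^{R_2}\rho^{1-p'}<\infty$, for $r\ge\xi$ the second term of the minimum is finite and for $r<\xi$ I need a little care, but in fact $\int_{r}^{R_2}\rho^{1-p'}\geq\int_{\xi}^{R_2}\rho^{1-p'}$ is a positive finite number only when... — more precisely, for $r<\xi$ we have $\int_r^{R_2}\rho^{1-p'}=\int_r^{\xi}\rho^{1-p'}+\int_{\xi}^{R_2}\rho^{1-p'}$, and $\int_r^\xi\rho^{1-p'}$ could be infinite; but the \emph{other} term, $\int_{R_1}^{r}\rho^{1-p'}$, is finite for every $r\in(R_1,\xi)$ because $\int_{R_1}^{\xi}\rho^{1-p'}=\infty$ is an improper divergence at the \emph{left} endpoint only if the integrand is not locally integrable near $R_1$ — here I use instead that for each fixed $r>R_1$, $\rho^{1-p'}\in L^1_{\loc}(R_1,R_2)$ so $\int_{R_1}^{r}\rho^{1-p'}<\infty$. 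Wait—that is not automatic; rather $\int_{R_1}^{r}\rho^{1-p'}$ is finite for $r<\xi$ precisely because the divergence of $\int_{R_1}^{\xi}\rho^{1-p'}$ must come from the behaviour near $\xi^-$... Actually the cleanest route: $v^{-s}\in L^1_{\loc}(R_1,R_2)$ with $s\ge\frac{1}{p-1}$ gives $\rho^{1-p'}=r^{(1-N)(p'-1)}v^{-(p'-1)}\in L^1_{\loc}(R_1,R_2)$ (since $p'-1=\frac1{p-1}\le s$ and local boundedness of the power of $r$), hence $\int_{R_1}^{r}\rho^{1-p'}<\infty$ and $\int_{r}^{R_2}\rho^{1-p'}$ has only a possible divergence accumulating at $R_2$, which is excluded by $\int_{\xi}^{R_2}\rho^{1-p'}<\infty$. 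Therefore $P(r)<\infty$ everywhere, $P(r)\le(\int_r^{R_2}\rho^{1-p'})^{p-1}$ on all of $(R_1,R_2)$, and the assumed finiteness of $\int_{R_1}^{R_2}(\int_r^{R_2}\rho^{1-p'})^{p-1}\sigma(r)\diff r$ gives $\int P\sigma<\infty$. Case (II) is symmetric, bounding $P(r)\le(\int_{R_1}^{r}\rho^{1-p'})^{p-1}$ throughout.

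Having verified $\mathrm{(A)}$ in all three cases, I invoke Theorem~\ref{2.theorem.compact_embedding} with $L(x)=v(|x|)$ to conclude $\mathcal{D}_{0}^{1,p}(A_{R_1}^{R_2};v)\hookrightarrow\hookrightarrow L^p(A_{R_1}^{R_2};w)$. The only mildly delicate point, and the one I would state carefully, is the reduction of the radial local integrability condition $v^{-s}\in L^1_{\loc}(R_1,R_2)$ to the $N$-dimensional condition $L^{-s}\in L^1_{\loc}(A_{R_1}^{R_2})$ and, dually, that this same hypothesis forces $\rho^{1-p'}\in L^1_{\loc}(R_1,R_2)$, which is what makes the "min" in the definition of $P$ behave well (each of the two partial integrals of $\rho^{1-p'}$ is finite except possibly for a divergence accumulating at the opposite endpoint, and in each of (I)--(III) that divergence is precisely the one controlled by the hypothesis). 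No genuine obstacle is expected here; the corollary is essentially a packaging of Theorem~\ref{2.theorem.compact_embedding} into three concrete, easily-checked situations.
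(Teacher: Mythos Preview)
Your approach is exactly what the paper intends: the corollary is stated without proof as an ``explicit consequence'' of Theorem~\ref{2.theorem.compact_embedding}, and verifying $\mathrm{(A)}$ in each of the three cases is the whole content. One small slip to clean up: in case~(I) you write ``hence $\int_{R_1}^{r}\rho^{1-p'}<\infty$'', but this is false there---local integrability of $\rho^{1-p'}$ on $(R_1,R_2)$ together with $\int_{R_1}^{\xi}\rho^{1-p'}=\infty$ forces $\int_{R_1}^{r}\rho^{1-p'}=\infty$ for \emph{every} $r$; fortunately this is irrelevant, since your correct observation that $\int_{r}^{R_2}\rho^{1-p'}<\infty$ (via local integrability on $[r,\xi]$ plus the hypothesis on $[\xi,R_2)$) already gives $P(r)<\infty$ through the minimum, and indeed $P(r)=(\int_r^{R_2}\rho^{1-p'})^{p-1}$ identically in case~(I).
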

Finally, we provide a simple special case of Theorem~\ref{2.theorem.compact_embedding}.
\begin{corollary}\label{exteriorembedding}
Let $v,w$ are measurable and positive a.e. in $(R,\infty)$ such that $v,v^{-s}\in L^1_{\loc}(R,\infty)$ for some $R\in (0,\infty)$, $s\in (\frac{N}{p},\infty)\cap [\frac{1}{p-1},\infty)$ and one of the following conditions holds true:
	\begin{itemize}
	\item[($\mathrm{W_1}$)]  there exists $\xi\in(R,\infty)$ such that $\underset{r\geq\xi}\essinf\ v(r)>0,$ $v^{-\frac{1}{p-1}}\in L^1(R,\xi)$ and 
	$$\begin{cases}
	\int_{R}^{\xi}\left[\int_{R}^{r}v^{-\frac{1}{p-1}}(\tau)\diff \tau\right]^{p-1}w(r)\diff r+\int_{\xi}^{\infty}r^{p-1}w(r)\diff r<\infty,\ p\ne N,\\
	\int_{R}^{\xi}\left[\int_{R}^{r}v^{-\frac{1}{N-1}}(\tau)\diff \tau\right]^{N-1}w(r)\diff r+\int_{\xi}^{\infty}[r\log r]^{N-1}w(r)\diff r<\infty,\ p=N;
	\end{cases}$$
	\item[($\mathrm{W_2}$)]  there exists $\xi\in(R,\infty)$ such that $\underset{R\leq r\leq\xi}\essinf\ v(r)>0,$ $\left[r^{N-1}v\right]^{-\frac{1}{p-1}}\in L^1(\xi,\infty),$ and
	$$	\int_{R}^{\xi}(r-R)^{p-1}w(r)\diff r+\int_{\xi}^{\infty}\left[\int_{r}^{\infty}\tau^{-\frac{N-1}{p-1}}v^{-\frac{1}{p-1}}(\tau)\diff \tau\right]^{p-1}r^{N-1}w(r)\diff r<\infty.$$ 
\end{itemize} 
Then, we have the following embedding
		$$\mathcal{D}_{0}^{1,p}(B_R^c;v) \hookrightarrow\hookrightarrow L^p(B_R^c;w).$$
\end{corollary}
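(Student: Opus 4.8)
The plan is to read Corollary~\ref{exteriorembedding} off from Theorem~\ref{2.theorem.compact_embedding}, applied with $R_1=R$, $R_2=\infty$, $L(x)=v(|x|)$ and $K(x)=w(|x|)$. With these choices part $\mathrm{(ii)}$ of $\mathrm{(A)}$ holds with equalities, the weighted space $\mathcal{D}_0^{1,p}(B_R^c;L)$ is literally $\mathcal{D}_0^{1,p}(B_R^c;v)$ (and is a uniformly convex Banach space once $\mathrm{(A)}$ is checked, by Theorem~\ref{2.theorem.embedding}), and the target space in the conclusion of Theorem~\ref{2.theorem.compact_embedding} is exactly $L^p(B_R^c;w)$. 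The extra hypothesis $L^{-s}\in L^1_{\loc}(B_R^c)$ is immediate from the standing assumption $v^{-s}\in L^1_{\loc}(R,\infty)$, because $\int_{A_a^b}v(|x|)^{-s}\diff x=|S_1|\int_a^b r^{N-1}v(r)^{-s}\diff r$ and $r^{N-1}$ is bounded away from $0$ and $\infty$ on every $[a,b]\subset(R,\infty)$. So the whole matter reduces to verifying, separately under $\mathrm{(W_1)}$ and under $\mathrm{(W_2)}$, that $\mathrm{(A)}$ holds: namely that $v^{-\frac1{p-1}},w\in L^1_{\loc}(R,\infty)$, that $P(r)<\infty$ for all $r\in(R,\infty)$, and that $\int_R^\infty P(r)\sigma(r)\diff r<\infty$, where $\rho^{1-p'}(\tau)=\tau^{-\frac{N-1}{p-1}}v^{-\frac1{p-1}}(\tau)$ and $\sigma(r)=r^{N-1}w(r)$.

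The bulk of the work is the bound $\int_R^\infty P\sigma<\infty$, which I would get by splitting the integral at $\xi$ and, at each point, estimating $P(r)$ by the more convenient of the two quantities whose minimum defines it; throughout I would use that on $[R,\xi]$ --- an interval that is bounded and bounded away from $0$ --- the weight $\rho^{1-p'}(\tau)$ is comparable, up to constants, to $v^{-\frac1{p-1}}(\tau)$, while $\sigma(r)$ is comparable to $w(r)$. Under $\mathrm{(W_2)}$: near $R$, the bound $v\geq\essinf_{R\leq r\leq\xi}v(r)>0$ gives $\int_R^r\rho^{1-p'}\leq C(r-R)$, so $P(r)\leq C(r-R)^{p-1}$ and $\int_R^\xi P\sigma\lesssim\int_R^\xi(r-R)^{p-1}w(r)\diff r<\infty$ by the first integral in $\mathrm{(W_2)}$; near $\infty$, using $P(r)\leq\big(\int_r^\infty\rho^{1-p'}\big)^{p-1}$ directly, $\int_\xi^\infty P\sigma\leq\int_\xi^\infty\Big(\int_r^\infty\tau^{-\frac{N-1}{p-1}}v^{-\frac1{p-1}}(\tau)\diff\tau\Big)^{p-1}r^{N-1}w(r)\diff r<\infty$ by the second integral; finiteness of $P(r)$ and local integrability of $v^{-\frac1{p-1}}$ and $w$ then follow from $\rho^{1-p'}\in L^1(\xi,\infty)$ together with local boundedness of the power weights. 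Under $\mathrm{(W_1)}$: near $R$, comparability gives $P(r)\lesssim\big(\int_R^r v^{-\frac1{p-1}}\big)^{p-1}$, so $\int_R^\xi P\sigma\lesssim\int_R^\xi\big(\int_R^r v^{-\frac1{p-1}}\big)^{p-1}w(r)\diff r<\infty$ by the first integral in $\mathrm{(W_1)}$; near $\infty$, using $v\geq\essinf_{r\geq\xi}v(r)>0$, I would bound $\int_r^\infty\rho^{1-p'}$ when $1<p<N$ and $\int_R^r\rho^{1-p'}$ when $p\geq N$ by the explicit antiderivative of $\tau^{-\frac{N-1}{p-1}}$, obtaining $P(r)\leq Cr^{p-N}$ when $p\neq N$ and $P(r)\leq C(\log r)^{N-1}$ when $p=N$; multiplying by $\sigma(r)=r^{N-1}w(r)$ produces precisely the integrands $r^{p-1}w(r)$ and $(r\log r)^{N-1}w(r)$, so $\int_\xi^\infty P\sigma<\infty$ by the second integral in $\mathrm{(W_1)}$.

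Once $\mathrm{(A)}$ and $L^{-s}\in L^1_{\loc}(B_R^c)$ are in place, Theorem~\ref{2.theorem.compact_embedding} delivers the compact embedding $\mathcal{D}_0^{1,p}(B_R^c;v)\hookrightarrow\hookrightarrow L^p(B_R^c;w)$, which is the claim; alternatively one could feed $\mathrm{(W_1)}$ or $\mathrm{(W_2)}$ into the preceding corollary with $R_1=R$ and $R_2=\infty$. The single slightly delicate point I anticipate is the case $p=N$ near infinity: there $P(r)\leq\big(\int_R^r\tau^{-1}v^{-\frac1{N-1}}(\tau)\diff\tau\big)^{N-1}$ behaves like $C(\log r)^{N-1}$ only after the additive constant coming from $\int_R^\xi\tau^{-1}v^{-\frac1{N-1}}\diff\tau$ is absorbed into the logarithm, which I would handle by restricting to radii large enough that $\log r$ dominates that constant (the leftover finite-radius piece contributing only a harmless finite amount); everything else is a routine matter of H\"older's inequality and monotonicity of the integrals.
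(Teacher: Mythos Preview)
Your proposal is correct and follows exactly the route the paper intends: the corollary is stated as a ``simple special case'' of Theorem~\ref{2.theorem.compact_embedding} with no proof given, and you have filled in precisely the verification that $\mathrm{(W_1)}$ and $\mathrm{(W_2)}$ each imply $\mathrm{(A)}$ together with $L^{-s}\in L^1_{\loc}(B_R^c)$. Your case split at $\xi$, the comparability of $\rho^{1-p'}$ with $v^{-\frac1{p-1}}$ on the bounded piece, and the explicit antiderivative of $\tau^{-\frac{N-1}{p-1}}$ on the unbounded piece (with the distinction $p<N$, $p>N$, $p=N$) are all correct; the absorption of the additive constant into $\log r$ for $p=N$ is handled properly.
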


\begin{remark}\label{2.Rmk.Compare_with_ADS}\rm
	In particular, $\mathrm{(W_1)}$ is a special case of (A). When $v$ is a constant, say, $v\equiv 1$ and $R=1,$ then $ \mathrm{(W_1)}$ becomes
	\begin{itemize}
		\item[$\mathrm{(W_{1,c})}$] $w\in\begin{cases} 
			L^1((1,\infty);(r-1)^{p-1}),\ p\ne N,\\
			L^1((1,\infty);[r\log r]^{N-1}),\ p=N.
		\end{cases}$
	\end{itemize}
Clearly, a weight $w$ satisfying $\mathrm{(ADS)}$ satisfies also $\mathrm{(W_{1,c})}$. On the other hand, for $-p<\beta\leq -1$ and $p\ne N$ the weight $$w(r)=\begin{cases}
	(r-1)^\beta, \quad 1\leq r\leq 2,\\
	\in L^1((2,\infty); r^{p-1}),
	\end{cases} $$
	satisfies $\mathrm{(W_{1,c})}$ but it does not satisfy $\mathrm{(ADS)}$. Therefore, the condition (A) is weaker than the condition (ADS). 
	
\end{remark}

\begin{remark}\label{2.Rmk.compare_with_(OK)}\rm
It is worth noting that the condition $\mathrm{(OK)}$ does not include $\mathrm{(W_1)}$ and hence, does not include $\mathrm{(A)}$.
 For instance, let $1<p<N,$ $\alpha<p-1,$ $\beta\geq 0,$ $\alpha-p<\alpha_1\leq -1,$ and $-N\leq \beta_1<-p.$ Set
$$v(r)=\begin{cases}
(r-1)^\alpha,\quad 1\leq r\leq 2,\\
\in [1,3^\beta],\quad 2\leq r\leq 3,\\
r^\beta,\quad 3\leq r,
\end{cases}\ \text{and}\ w(r)=\begin{cases}
	(r-1)^{\alpha_1},\quad 1\leq r\leq 2,\\
	\in [3^{\beta_1},1],\quad 2\leq r\leq 3,\\
	r^{\beta_1},\quad 3\leq r.
\end{cases}$$ 
We can verify that $v,w$ satisfy $\mathrm{(W_1)}$ with $R=1$ but $\rho(r)=r^{N-1}v(r)$ and $\sigma(r)=r^{N-1}w(r)$ do not satisfy $\mathrm{(OK)}$  (with $a=1$ and $b =\infty$) since $\int_{1}^{r}\sigma(\tau)\diff \tau=\int_{r}^{\infty}\sigma(\tau)\diff \tau=\infty $ for all $r\in (1,\infty).$ To find $v$ and $w$ which satisfy (OK) but do not satisfy (A) seems to be an open problem. 

\end{remark}
Finally, we state a property of $\mathcal{D}_{0}^{1,p}(A_{R_1}^{R_2};L),$ that will be used in the next sections. In what follows, we denote $u^+=\max\{u,0\}$ and $u^-=-\min\{u,0\}.$
\begin{proposition}\label{prop.(u-k)^+}
If $u\in \mathcal{D}_{0}^{1,p}(A_{R_1}^{R_2};L)$ and $k\ge 0,$ then $(u-k)^+,(u+k)^-\in \mathcal{D}_{0}^{1,p}(A_{R_1}^{R_2};L).$ 
\end{proposition}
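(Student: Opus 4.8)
The plan is to prove Proposition~\ref{prop.(u-k)^+} by first establishing it for the dense class $C_c^1(A_{R_1}^{R_2})$ together with a suitable approximation and then passing to the limit using the completeness of $\mathcal{D}_{0}^{1,p}(A_{R_1}^{R_2};L)$. The only mild subtlety is that the definition of $\mathcal{D}_{0}^{1,p}(A_{R_1}^{R_2};L)$ as the completion of $C_c^1$ in the norm $\|\cdot\|$ does not, a priori, identify its elements with functions; however, under $\textup{(A)}$ we have $L^{-1/(p-1)}\in L^1_{\loc}$, so by the embeddings of Corollary~\ref{2.corollary.local_embeddings}(i) the space embeds into $L^p_{\loc}(A_{R_1}^{R_2})$ and in fact $\mathcal{D}_{0}^{1,p}(A_{R_1}^{R_2};L)\subset W^1(A_{R_1}^{R_2})$ (this inclusion is already invoked in the proof of Theorem~\ref{2.theorem.compact_embedding}). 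Thus each $u\in\mathcal{D}_{0}^{1,p}(A_{R_1}^{R_2};L)$ is a genuine Sobolev-type function with a weak gradient $\nabla u$ satisfying $\int_{A_{R_1}^{R_2}}L|\nabla u|^p\diff x<\infty$, and the truncation operations make classical sense.

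First I would fix $u\in\mathcal{D}_{0}^{1,p}(A_{R_1}^{R_2};L)$ and $k\ge0$, choose $u_n\in C_c^1(A_{R_1}^{R_2})$ with $\|u_n-u\|\to0$, and recall the standard chain rule for the (smooth approximation of the) positive-part function: for the Lipschitz function $F(t)=(t-k)^+$ one has $(u_n-k)^+\in W^1_{\loc}$ with $\nabla\big((u_n-k)^+\big)=\chi_{\{u_n>k\}}\nabla u_n$ a.e.\ (and likewise $\nabla\big((u_n+k)^-\big)=-\chi_{\{u_n<-k\}}\nabla u_n$). Consequently
\begin{equation*}
\int_{A_{R_1}^{R_2}}L(x)\big|\nabla\big((u_n-k)^+\big)-\nabla\big((u-k)^+\big)\big|^p\diff x\le \int_{A_{R_1}^{R_2}}L(x)\big|\nabla u_n-\nabla u\big|^p\diff x +\, o(1),
\end{equation*}
where the $o(1)$ term accounts for the region where $\chi_{\{u_n>k\}}\ne\chi_{\{u>k\}}$ and is handled by the usual argument that, along a subsequence, $u_n\to u$ and $\nabla u_n\to\nabla u$ pointwise a.e.\ (after passing to a further subsequence using the embedding into $L^p_{\loc}$ and local convergence of gradients in $L^{p_s}$ from Corollary~\ref{2.corollary.local_embeddings}(ii), valid since $L^{-s}\in L^1_{\loc}$ under $\textup{(A)}$), combined with dominated convergence: the integrand is dominated by $2^{p}L|\nabla u_n|^p+2^pL|\nabla u|^p$ whose integrals converge, so by a generalized dominated convergence theorem the $o(1)$ term tends to $0$.

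This shows that $\{(u_n-k)^+\}$ is a Cauchy sequence in $\mathcal{D}_{0}^{1,p}(A_{R_1}^{R_2};L)$; since each $(u_n-k)^+$ is compactly supported and Lipschitz it can be approximated in $\|\cdot\|$ by $C_c^1$ functions (e.g.\ by mollification, using that on the compact support $\essinf L>0$ by the hypothesis in $\textup{(A1)}$/Corollary~\ref{2.corollary.local_embeddings} on any interior annulus — or more simply, noting $(u_n-k)^+$ itself is a limit in $\|\cdot\|$ of $C_c^1$ functions via standard smoothing on its compact support), so each $(u_n-k)^+$ lies in $\mathcal{D}_{0}^{1,p}(A_{R_1}^{R_2};L)$. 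By completeness the limit belongs to $\mathcal{D}_{0}^{1,p}(A_{R_1}^{R_2};L)$, and by the $L^p_{\loc}$-embedding the limit is identified a.e.\ with $(u-k)^+$. The argument for $(u+k)^-$ is identical with $F(t)=(t+k)^-$. The main obstacle — and the only point requiring care — is the $o(1)$ control of the gradient near the level set $\{u=k\}$ when passing $u_n\to u$; this is the familiar measure-theoretic lemma that truncation is continuous on $W^{1,p}$, adapted here to the weighted norm, and it goes through because the weight $L$ is a fixed nonnegative $L^1_{\loc}$ density so that $L\,\diff x$ behaves like an ordinary ($\sigma$-finite) measure for the purposes of dominated convergence.
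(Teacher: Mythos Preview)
Your approach is essentially the same as the paper's: approximate $u$ by $\varphi_n\in C_c^1$, show $(\varphi_n-k)^+\to(u-k)^+$ in the weighted norm, and then mollify each compactly supported Lipschitz function $(\varphi_n-k)^+$ to land back in $C_c^1$ (the paper makes the diagonal argument explicit). Two small inaccuracies to clean up: (i) assumption $\textup{(A)}$ does \emph{not} give $\essinf L>0$ on interior annuli, so your first justification for the mollification step is wrong---but your alternative via $L\in L^1_{\loc}$ is exactly what the paper uses and is correct; (ii) $\textup{(A)}$ only yields $L^{-1/(p-1)}\in L^1_{\loc}$, which need not satisfy $s>N/p$, so the appeal to Corollary~\ref{2.corollary.local_embeddings}(ii) is unjustified---fortunately it is also unnecessary, since a.e.\ convergence of $\nabla u_n$ (along a subsequence) already follows from $\int L|\nabla u_n-\nabla u|^p\to0$ and $L>0$ a.e.
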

\begin{proof}
	Argument is standard and we only sketch the main idea. Since $(u+k)^-=(-u-k)^+$, it suffices to prove that $(u-k)^+\in \mathcal{D}_{0}^{1,p}(A_{R_1}^{R_2};L).$ That is, we prove the existence of a sequence $\{u_n\}\subset C_c^1(A_{R_1}^{R_2})$ such that
	\begin{equation}\label{(u-k)^+}
	\int_{A_{R_1}^{R_2}}L(x)|\nabla u_n-\nabla(u-k)^+|^p\diff x \to 0 \quad \text{as}\quad n\to\infty.
	\end{equation}
	To this end, let $\{\varphi_n\}\subset C_c^1(A_{R_1}^{R_2})$ such that $\|\varphi_n-u\|\to 0$ as $n\to\infty.$ It is easy to see that 
	\begin{equation}\label{(u-k)^+1}
	\int_{A_{R_1}^{R_2}}L(x)|\nabla(\varphi_n-k)^+-\nabla(u-k)^+|^p\diff x \to 0 \quad \text{as}\quad n\to\infty.
	\end{equation}For each $n\in\mathbb{N},$ set $\psi_n:=(\varphi_n-k)^+.$ Fix $n$ and let $R_1<r_1<r_2<R_2$ such that $\operatorname{supp}(\psi_n)\subset A_{r_1}^{r_2}.$ For each $i\in \mathbb{N}$, define $\eta_i(x):=i^N\eta(ix),$ where $\eta$ is a standard normalized mollifier in $\mathbb{R}^N$ and define
	$$v_i^{(n)}(x):=(\eta_i\ast\psi_n)(x)=\int_{\mathbb{R}^N}\eta_i(x-y)\psi_n(y)\diff y.$$
Thus, $v_i^{(n)}\in C^\infty(\mathbb{R}^N)$ for all $i$ and  	$\operatorname{supp}(v_i^{(n)})\subset A_{r_1}^{r_2}$ for  $i$ large. From this together with $L\in L^1(A_{r_1}^{r_2})$ and properties of mollifiers, we obtain
\begin{equation*}
\int_{A_{R_1}^{R_2}}L(x)|\nabla v_i^{(n)}-\nabla\psi_n|^p\diff x \to 0 \quad \text{as}\quad i\to\infty.
\end{equation*}
Thus, we find $i_n$ such that
\begin{equation*}
\int_{A_{R_1}^{R_2}}L(x)|\nabla v_{i_n}^{(n)}-\nabla\psi_n|^p\diff x <\frac{1}{n}\quad \text{i.e.,}\quad  \int_{A_{R_1}^{R_2}}L(x)|\nabla u_n-\nabla(\varphi_n-k)^+|^p\diff x <\frac{1}{n},
\end{equation*}
where $u_n:=v_{i_n}^{(n)}$ $(\in C_c^1(A_{R_1}^{R_2})).$ From here and \eqref{(u-k)^+1}, for such a sequence $\{u_n\}$ we obtain \eqref{(u-k)^+} and the proof is complete.
\end{proof}
\begin{remark}\label{domains}\rm
	Obviously, in this section we can allow $R_1=0$, that is, $A_{R_1}^{R_2}$ is of the form $B_R\setminus\{0\}$ ($0<R\leq\infty$). When $1<p<N$ and $L\in L^1_{\loc}(B_R)$ 
	such that $\lim_{r\to 0}\frac{1}{|B_r|}\int_{B_r}L(x)\diff x<\infty,$ then the space $\mathcal{D}_{0}^{1,p}(A_{0}^{R};L)$ coincides with $\mathcal{D}_{0}^{1,p}(B_R;L),$ the completion of $C_c^1(B_R)$ with respect to the norm
	$$\|u\|=\left(\int_{B_R}L(x)|\nabla u|^p\diff x\right)^{1/p}.$$ 
	That is, $\mathcal{D}_{0}^{1,p}(A_{0}^{R};L)$ is the usual solution space for the Dirichlet problem in a ball $B_R.$
\end{remark}


\section{The eigenvalue problem involving the weighted \texorpdfstring{$p$}{Lg}-Laplacian}\label{Sec.EigenvalueProblem}
In this section we discuss the existence and properties of the first eigenpair of the eigenvalue problem \eqref{1.1}. If $\mathrm{(A)}$ holds and $L^{-s} \in L^1_{\loc}(A_{R_1}^{R_2})$ for some $s\in (\frac{N}{p},\infty)\cap [\frac{1}{p-1},\infty),$ then by the compact embedding $\mathcal{D}_{0}^{1,p}(A_{R_1}^{R_2};L)\hookrightarrow\hookrightarrow L^p(A_{R_1}^{R_2};w)$ and Proposition~\ref{prop.(u-k)^+}, arguing as in \cite[Proof of Lemma 4.1]{Anoop.CV}, we obtain the existence of a principal eigenvalue as follows.
\begin{lemma}\label{3.existence}
Assume that $\mathrm{(A)}$ holds and $L^{-s} \in L^1_{\loc}(A_{R_1}^{R_2})$ for some $s\in (\frac{N}{p},\infty)\cap [\frac{1}{p-1},\infty).$ Then $\lambda_1$ defined in \eqref{def.lamda1} is positive, it is achieved at some $\varphi_1\ge 0$ and $(\lambda_1,\varphi_1)$ is an eigenpair of \eqref{1.1}.
\end{lemma}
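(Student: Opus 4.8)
The plan is to combine the embeddings of Section~\ref{Sec.Preliminaries} with the direct method of the calculus of variations, following the scheme of \cite[Proof of Lemma 4.1]{Anoop.CV}. First I would check that the constraint set in \eqref{def.lamda1} is nonempty: since $\operatorname{meas}\{x\in A_{R_1}^{R_2}:K(x)>0\}>0$, there is $u_0\in C_c^1(A_{R_1}^{R_2})$ with $\int_{A_{R_1}^{R_2}}K(x)|u_0|^p\diff x>0$, and a suitable rescaling of $u_0$ is admissible; hence $\lambda_1\in[0,\infty)$. For the strict positivity $\lambda_1>0$ I would use $\mathrm{(A)}$(ii) together with the continuous embedding of Theorem~\ref{2.theorem.embedding}: for every $u\in\mathcal{D}_{0}^{1,p}(A_{R_1}^{R_2};L)$,
\[
\int_{A_{R_1}^{R_2}}K(x)|u|^p\diff x\le\int_{A_{R_1}^{R_2}}w(|x|)|u|^p\diff x\le C^p\|u\|^p,
\]
with $C$ the embedding constant, so any $u$ with $\int K|u|^p=1$ satisfies $\|u\|^p\ge C^{-p}$, whence $\lambda_1\ge C^{-p}>0$.

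Next I would establish attainment by the direct method. Take a minimizing sequence $\{u_n\}$ with $\int K|u_n|^p=1$ and $\|u_n\|^p\to\lambda_1$; it is bounded in $\mathcal{D}_{0}^{1,p}(A_{R_1}^{R_2};L)$, which is reflexive (being uniformly convex), so along a subsequence $u_n\rightharpoonup\varphi_1$ in $\mathcal{D}_{0}^{1,p}(A_{R_1}^{R_2};L)$. By the compact embedding of Theorem~\ref{2.theorem.compact_embedding}, $u_n\to\varphi_1$ in $L^p(A_{R_1}^{R_2};w)$, hence $|u_n|^p\to|\varphi_1|^p$ in $L^1(A_{R_1}^{R_2};w)$; since $|K|\le w$ a.e.\ by $\mathrm{(A)}$(ii), this yields $\int K|u_n|^p\to\int K|\varphi_1|^p$, so that $\int K|\varphi_1|^p=1$ and $\varphi_1\ne 0$. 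Weak lower semicontinuity of the (convex, continuous) functional $u\mapsto\|u\|^p$ then gives $\|\varphi_1\|^p\le\liminf_n\|u_n\|^p=\lambda_1$, while the admissibility of $\varphi_1$ forces $\|\varphi_1\|^p\ge\lambda_1$; hence $\varphi_1$ is a minimizer. Replacing $\varphi_1$ by $|\varphi_1|$ (which lies in $\mathcal{D}_{0}^{1,p}(A_{R_1}^{R_2};L)$ by Proposition~\ref{prop.(u-k)^+} with $k=0$, and satisfies $|\nabla |\varphi_1||=|\nabla\varphi_1|$ a.e.) leaves both integrals unchanged, so we may take $\varphi_1\ge 0$.

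Finally, to show $(\lambda_1,\varphi_1)$ is an eigenpair I would invoke the Lagrange multiplier rule. The functionals $J(u):=\int_{A_{R_1}^{R_2}}L(x)|\nabla u|^p\diff x$ and $G(u):=\int_{A_{R_1}^{R_2}}K(x)|u|^p\diff x$ are of class $C^1$ on $\mathcal{D}_{0}^{1,p}(A_{R_1}^{R_2};L)$ (for $G$ this uses again $\mathrm{(A)}$(ii) and Theorem~\ref{2.theorem.embedding}), and since $\varphi_1$ minimizes $J$ on $\{G=1\}$ with $G'(\varphi_1)\ne 0$, there is $\mu\in\mathbb{R}$ with $J'(\varphi_1)=\mu\,G'(\varphi_1)$, i.e.
\[
\int_{A_{R_1}^{R_2}}L(x)|\nabla\varphi_1|^{p-2}\nabla\varphi_1\cdot\nabla v\diff x=\mu\int_{A_{R_1}^{R_2}}K(x)|\varphi_1|^{p-2}\varphi_1 v\diff x,\quad\forall v\in\mathcal{D}_{0}^{1,p}(A_{R_1}^{R_2};L).
\]
Testing with $v=\varphi_1$ and using $J(\varphi_1)=\lambda_1$, $G(\varphi_1)=1$ gives $\mu=\lambda_1$, which proves the claim.

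I expect the main obstacle to be the passage to the limit in the constraint, $\int K|u_n|^p\to\int K|\varphi_1|^p$: since $K$ is only measurable and possibly sign-changing, weak convergence in $\mathcal{D}_{0}^{1,p}(A_{R_1}^{R_2};L)$ alone does not suffice, and it is precisely the compactness in $L^p(A_{R_1}^{R_2};w)$ from Theorem~\ref{2.theorem.compact_embedding} combined with the domination $|K|\le w$ that resolves it; these same two ingredients are also what make $G\in C^1$ and hence legitimize the multiplier argument.
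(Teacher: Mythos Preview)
Your proposal is correct and is precisely the argument the paper has in mind: the paper's own proof merely refers to \cite[Proof of Lemma 4.1]{Anoop.CV} and singles out the compact embedding of Theorem~\ref{2.theorem.compact_embedding} together with Proposition~\ref{prop.(u-k)^+} as the two inputs, and your write-up is exactly the standard expansion of that reference using these tools (direct method on the constraint set, compactness in $L^p(A_{R_1}^{R_2};w)$ combined with $|K|\le w$ to pass to the limit in $\int K|u_n|^p$, replacement of $\varphi_1$ by $|\varphi_1|$, and the Lagrange multiplier rule).
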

The positivity of $\varphi_1$ and the simplicity of $\lambda_1$ can be obtained in the same fashion as in \cite{Kawohl} with suitable modifications. However, the presence of the weight $L$ in the main operator somehow makes the conclusions not to follow in a straightforward manner. For the reader's convenience, we 
sketch the proofs briefly. Note that under the assumption of Theorem~\ref{Theore.eigenpair} we have $u\in W_{\loc}^{1,p_s}(A_{R_1}^{R_2})$ for any (weak) solution $u$ to problem \eqref{1.1} in view of Corollary~\ref{2.corollary.local_embeddings}. 
In fact, we work with the following representation of $u$, defined in $A_{R_1}^{R_2}$ by 
$$u^\ast(x):=\begin{cases}
\lim_{r\to 0}\frac{1}{|B(x,r)|}\int_{B(x,r)}u(y)\diff y & \text{if this limit exists},\\
0 & \text{otherwise}.
\end{cases}$$
In the next lemma, we state a strong maximum principle type result, which is similar to \cite[Proposition 3.2]{Kawohl}. 
\begin{lemma}\label{3.le.smp}
	Assume that $\mathrm{(A)}$ holds and $L^{-s} \in L^1_{\loc}(A_{R_1}^{R_2})$ for some $s\in (\frac{N}{p},\infty)\cap [\frac{1}{p-1},\infty).$ Let $V\in L_{\loc}^1 (A_{R_1}^{R_2})$ and $V\ge 0$. If a nontrivial nonnegative function $u\in \mathcal{D}_{0}^{1,p}(A_{R_1}^{R_2};L)$ satisfies $Vu^{p}\in L_{\loc}^1 (A_{R_1}^{R_2})$ and 
	\begin{equation}\label{3.smp.ineq}
	\int_{A_{R_1}^{R_2}}\big\{L(x)|\nabla u|^{p-2}\nabla u\cdot \nabla\xi +Vu^{p-1}\xi\big\}\diff x \geq 0,\quad \forall \xi\in C_c^\infty(A_{R_1}^{R_2}), \xi\geq 0,
	\end{equation}
	then $\operatorname{Cap}_{p_s}(\mathcal{Z})=0,$ where $\mathcal{Z}:=\big\{x\in A_{R_1}^{R_2}: u(x)=0\big\}.$
\end{lemma}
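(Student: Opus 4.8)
\textbf{Proof proposal for Lemma~\ref{3.le.smp}.}

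The plan is to localize the problem onto any fixed annular shell $A_{r_1}^{r_2}$ with $R_1<r_1<r_2<R_2$, where by Corollary~\ref{2.corollary.local_embeddings}(ii) we have $u\in W^{1,p_s}(A_{r_1}^{r_2})$ and the operator has non-degenerate ellipticity in the sense that $\essinf L$ need not be positive, but $L^{-s}\in L^1$ gives a uniform $W^{1,p_s}$ bound. The idea, following \cite[Proposition 3.2]{Kawohl}, is to test the differential inequality \eqref{3.smp.ineq} against $\xi=\zeta^p(u+\delta)^{1-p}$ (or rather a suitable truncation thereof, to keep it admissible), where $\zeta\in C_c^\infty(A_{r_1}^{r_2})$ is a cutoff and $\delta>0$. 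First I would verify that such test functions can be approximated by admissible ones in $C_c^\infty$: this requires $(u+\delta)^{1-p}$ to be controlled, which is where working with the precise representative $u^\ast$ and the convention $\tfrac{\alpha}{0}=\infty$ matters. Expanding the resulting inequality and using Young's inequality to absorb the gradient cross-term, the nonnegative potential term $V u^{p-1}\xi\ge 0$ is simply dropped, and one is left with an estimate of the form
\begin{equation*}
\int_{A_{r_1}^{r_2}} L(x)\,\zeta^p\,\Big|\nabla \log\big(1+\tfrac{u}{\delta}\big)\Big|^p\diff x \le C\int_{A_{r_1}^{r_2}} L(x)\,|\nabla\zeta|^p\diff x,
\end{equation*}
with $C$ independent of $\delta$. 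Since the right-hand side is finite (as $u\in\mathcal{D}_0^{1,p}(A_{R_1}^{R_2};L)$ and $\zeta$ is smooth compactly supported, so $L|\nabla\zeta|^p\in L^1$), this bounds $\log(1+u/\delta)$ in the weighted space uniformly in $\delta$, hence after using the local embedding $\mathcal{D}_0^{1,p}\hookrightarrow W^{1,p_s}(A_{r_1}^{r_2})$ of Corollary~\ref{2.corollary.local_embeddings}(ii), it bounds $\log(1+u/\delta)$ in $W^{1,p_s}(A_{r_1}^{r_2})$ uniformly in $\delta$.

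Next I would pass to the limit $\delta\to 0^+$. The functions $w_\delta:=\log(1+u/\delta)$ are nondecreasing as $\delta\downarrow 0$ and $w_\delta\to +\infty$ pointwise on $\{u>0\}$ and $w_\delta\equiv 0$ on $\mathcal{Z}$; the uniform $W^{1,p_s}$ bound forces (after truncation $w_\delta\wedge M$ and a standard capacity argument) the set $\mathcal{Z}=\{u^\ast=0\}$ to have $p_s$-capacity zero — for otherwise one could construct, from the $w_\delta$, a sequence of $W^{1,p_s}$ functions equal to $1$ on a quasi-open neighborhood of a compact subset of $\mathcal{Z}$ of positive capacity while their norms stay bounded, contradicting that $\mathrm{Cap}_{p_s}$ of that compact set is positive. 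More precisely, one fixes a compact $\mathcal{K}\subset\mathcal{Z}\cap A_{r_1}^{r_2}$, chooses $\zeta$ equal to $1$ near $\mathcal{K}$, observes $\min\{w_\delta/w_\delta(\text{typical large value}),1\}$ type competitors, and sends $\delta\to 0$ to conclude $\mathrm{Cap}_{p_s}(\mathcal{K})=0$; since $r_1,r_2$ are arbitrary and capacity is countably subadditive, $\mathrm{Cap}_{p_s}(\mathcal{Z})=0$.

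The main obstacle I anticipate is the \emph{justification that the logarithmic test function is admissible} in \eqref{3.smp.ineq}: the inequality is only assumed for $\xi\in C_c^\infty$, $\xi\ge0$, whereas $\zeta^p(u+\delta)^{1-p}$ is merely in $W^{1,p_s}_{\mathrm{loc}}$ (and only after knowing $u$ is, say, bounded below away from its zero set, which we do not). This is handled by a truncation-and-mollification scheme: replace $(u+\delta)^{1-p}$ by $(u_M+\delta)^{1-p}$ with $u_M=\min\{u,M\}$, mollify to land in $C_c^\infty$, pass mollification $\to$ the truncated function using $L\in L^1_{\mathrm{loc}}$ and the $W^{1,p_s}$ regularity, then send $M\to\infty$ using monotone/dominated convergence — the term $Vu^{p-1}\xi$ stays nonnegative throughout so its (possibly infinite) limit only helps. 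A secondary technical point is the interplay between the capacity $\mathrm{Cap}_{p_s}$, the precise representative $u^\ast$, and the fact that $W^{1,p_s}$ functions are quasicontinuous, which is exactly what makes the statement ``$\mathrm{Cap}_{p_s}(\mathcal{Z})=0$'' meaningful; I would cite the standard quasicontinuity and capacity facts (e.g.\ from the reference \cite[p.~26]{Dra-Kuf-Nic} or a standard potential-theory source) rather than reprove them. The rest — Young's inequality, cutoff manipulations, countable subadditivity of capacity — is routine.
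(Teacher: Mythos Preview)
Your approach is essentially the same as the paper's: both follow \cite[Proposition~3.2]{Kawohl}, testing the differential inequality with $\zeta^p(u+\delta)^{1-p}$ to obtain a $\delta$-uniform bound on $\int L\,\zeta^p\,|\nabla\log(1+u/\delta)|^p$, and then passing to a $W^{1,p_s}$ bound via H\"older's inequality with $L^{-s}\in L^1_{\loc}$ (the paper does this H\"older step explicitly rather than citing Corollary~\ref{2.corollary.local_embeddings}). The localization and the capacity endgame are the same.

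There is, however, a sign slip in your treatment of the potential term. From \eqref{3.smp.ineq} with $\xi=\zeta^p(u+\delta)^{1-p}$ one gets, after expanding $\nabla\xi$ and rearranging,
\[
(p-1)\int L\,\zeta^p\,\Big|\nabla\log\big(1+\tfrac{u}{\delta}\big)\Big|^p
= -\int L|\nabla u|^{p-2}\nabla u\cdot\nabla\xi + p\int L|\nabla u|^{p-2}(\nabla u\cdot\nabla\zeta)\,\zeta^{p-1}(u+\delta)^{1-p},
\]
and the inequality gives $-\int L|\nabla u|^{p-2}\nabla u\cdot\nabla\xi \le \int V u^{p-1}\xi$. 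So the $V$ term lands on the \emph{upper-bound} side and cannot be ``simply dropped''; your displayed estimate is missing it. The paper keeps it, bounding $\int V u^{p-1}\zeta^p(u+\delta)^{1-p}$ by $\int V(1+|u|^p)\zeta^p$, which is finite precisely because of the hypotheses $V\in L^1_{\loc}$ and $Vu^p\in L^1_{\loc}$---that is the whole reason those assumptions appear in the lemma. (In fact $u^{p-1}(u+\delta)^{1-p}\le 1$ already gives $\le \int_{\operatorname{supp}\zeta}V<\infty$, so either estimate works.) Once this constant is added to your right-hand side the rest of your argument goes through unchanged.
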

For the definition of the $p$-capacity $\operatorname{Cap}_p(\cdot)$ and related properties we refer to the book of Evans-Gariepy \cite{Evans} (see also \cite{Kawohl}). 
\begin{proof}We proceed as in \cite[Proof of Proposition 3.2]{Kawohl}. It is worth mentioning that in \cite{Kawohl}, the domain is required to be bounded when $N\leq p$. For each $n\in\mathbb{N},$ denote $\Omega_n:=A_{R_1}^{R_1+n}$ when $R_2=\infty$ and $\Omega_n:=A_{R_1}^{R_2}$  when  $R_2<\infty$ and define $\mathcal{Z}_n:=\big\{x\in \Omega_n: u(x)=0\big\}$. Since $\mathcal{Z}=\bigcup_{n=1}^\infty \mathcal{Z}_n,$ it suffices to show that $\operatorname{Cap}_{p_s}(\mathcal{Z}_n)=0$ for all $n\in\mathbb{N}.$ Let $n$ be fixed. As in\cite[Proof of Proposition 3.2]{Kawohl}, we will show for any $\xi\in C_c^\infty(\Omega_n)$ with $0\leq \xi\le 1$ there exits $C_0=C_0(u,\xi)>0$ such that
	\begin{equation}\label{3.log}
	\int_{\Omega_n}\left|\nabla \log \left(1+\frac{u}{\delta}\right)\right|^{p_s}\xi^{p_s}\diff x\leq C_0,\quad \forall \delta>0.
	\end{equation}
To obtain \eqref{3.log} we use the following identity
\begin{align*}
\int_{\Omega_n}L(x)&\left|\nabla \log \left(1+\frac{u}{\delta}\right)\right|^{p}\xi^{p}\diff x\\
&=\frac{1}{1-p}\int_{\Omega_n}L(x)|\nabla u|^{p-2}\nabla u\cdot\bigg[\nabla \left(\frac{\xi^p}{(u+\delta)^{p-1}}\right)
-p\xi^{p-1}(\nabla\xi)(u+\delta)^{1-p}\bigg]\diff x.
\end{align*}
Then, we use the same argument as in \cite[Proof of Proposition 3.2]{Kawohl}, and employing \eqref{3.smp.ineq}, to obtain
\begin{equation*}
\int_{\Omega_n}L(x)\left|\nabla \log \left(1+\frac{u}{\delta}\right)\right|^{p}\xi^{p}\diff x\\
\leq\int_{\Omega_n}V(x)(1+|u|^p)\xi^{p}\diff x+p^{p-1}\int_{\Omega_n}L(x)|\nabla\xi|^{p}\diff x.
\end{equation*}
Combining this and the estimate
\begin{align*}
\int_{\Omega_n}\bigg|\nabla \log \bigg(1+&\frac{u}{\delta}\bigg)\bigg|^{p_s}\xi^{p_s}\diff x\\
&\leq \left(\int_{\operatorname{supp}(\xi)}L^{-s}(x)\diff x\right)^{\frac{1}{s+1}}\left(\int_{\Omega_n}L(x)\left|\nabla \log \left(1+\frac{u}{\delta}\right)\right|^{p}\xi^{p}\diff x\right)^{\frac{s}{s+1}},
\end{align*}
we obtain \eqref{3.log}. The rest of the proof is similar to that of \cite[Proof of Proposition 3.2]{Kawohl}.
\end{proof}
Finally, we sketch the proof of Theorem~\ref{Theore.eigenpair}.
\begin{proof}[Proof of Theorem~\ref{Theore.eigenpair}]
By Lemma~\ref{3.existence}, we have $\lambda_1$ is a positive eigenvalue of \eqref{1.1} and there is a nonnegative eigenfunction $\varphi_1$ associated with $\lambda_1.$ Since
\begin{equation*}
\int_{A_{R_1}^{R_2}}\big\{L(x)|\nabla \varphi_1|^{p-2}\varphi_1\cdot \nabla\xi +\lambda_1K^-\varphi_1^{p-1}\xi\big\}\diff x= \lambda_1\int_{A_{R_1}^{R_2}}K^+\varphi_1^{p-1}\xi\diff x\geq 0
\end{equation*}
for all $\xi\in C_c^\infty(A_{R_1}^{R_2}), \xi\geq 0,$ we get $\varphi_1>0$ a.e. in $A_{R_1}^{R_2}$ in view of Lemma~\ref{3.le.smp}. The simplicity of $\lambda_1$ can be proved by the same argument as \cite[Proof of Theorem 1.3]{Kawohl} for which we invoke Lemma~\ref{3.le.smp} and use $p_s$-capacity instead of $p$-capacity.
\end{proof}
\begin{remark}\label{problems_on_balls}\rm
	Similarly to Section~\ref{Sec.Preliminaries}, in this section we can also allow $R_1=0$. As shown in Remark~\ref{domains}, when $1<p<N$ and $L\in L^1_{\loc}(B_R)$ such that $\lim_{r\to 0}\frac{1}{|B_r|}\int_{B_r}L(x)\diff x<\infty$ also in this section we recover results for a ball $B_R$ ($0<R\leq\infty$).
\end{remark}

\section{Qualitative properties of solutions}\label{Sec.Boundedness}
In this section we prove qualitative properties of solutions mentioned in 
Section~\ref{introduction} (Theorems~\ref{Theorem.local.behaviorI}--
\ref{Theorem.everywhere-positivity} and Corollaries~
\ref{Coro.decay.at.infinity.2}--\ref{Coro.decay.at.R1}). 
\subsection{Boundedness of solutions} In this subsection, we obtain the (local) boundedness of solutions to problem~\eqref{1.1}. As we mentioned in Section~\ref{introduction}, the boundedness of solutions can be obtained for more general nonlinear term via the De Giorgi type iterations technique. More precisely, consider the following problem
\begin{equation}\label{P}
-\operatorname{div}\left( L(x) |\nabla u| ^{p-2}\nabla u\right)=f(x,u) \quad \text{a.e.\ in }A_{R_1}^{R_2} ,
\end{equation}
where the weight $L$ satisfies the condition $\mathrm{(A1)}$ in the Corollary~\ref{2.corollary.local_embeddings} and the nonlinear term $f$ satisfies
\begin{itemize}
	\item[(F)] $f: A_{R_1}^{R_2}\times \mathbb{R} \to \mathbb{R}$ is a Carath\'{e}odory function such that $|f(x,\tau)| \leq a(x)|\tau|^{p-1} +b(x)$ for a.e. $x \in A_{R_1}^{R_2}$ and all $\tau\in \mathbb{R}$, where $a,b$ are nonnegative measurable functions in $A_{R_1}^{R_2}.$  
\end{itemize}
\begin{definition}\rm
	By a weak solution of problem \eqref{P}, we mean a function $u\in \mathcal{D}_{0}^{1,p}(A_{R_1}^{R_2};L)$ such that $f(\cdot,u)\in L_{\loc}^1(A_{R_1}^{R_2})$ and 
	\begin{equation*}     
		\int_{A_{R_1}^{R_2}}L(x)|\nabla u|^{p-2}\nabla u\cdot \nabla \xi \diff x= \int_{A_{R_1}^{R_2}}f(x,u)\xi\diff x, \quad \forall \xi \in  C_c^1(A_{R_1}^{R_2}).   \label{4defweaksol}
	\end{equation*}
\end{definition}
\begin{theorem}\label{apriorimainthm}
		Assume that $\mathrm{(A1)}$ and $\mathrm{(F)}$ hold. 
	\begin{itemize}
		\item[(i)] Assume in addition that 
		$L,a \in L^{\frac{q}{q-p}}(A_{R_1}^{R_1+2\epsilon}),$ $b \in L^{\frac{t}{t-1}} (A_{R_1}^{R_1+2\epsilon})$ and $L^{-s} \in L^1(A_{R_1}^{R_1+2\epsilon})$ for some $\epsilon\in (0,\frac{R_2-R_1}{2}),$ $s\in (\frac{N}{p},\infty)\cap [\frac{1}{p-1},\infty),$ $q\in [p,p_s^{\ast})$ and $t\in [1,q]\cap [1,\frac{p_s^{\ast}}{p}).$  Then for any weak solution $u$ of problem \eqref{P},
		we have $u\in L^q(A_{R_1}^{R_1+2\epsilon})\cap L^{\infty}(A_{R_1}^{R_1+\epsilon})$ and 
		\begin{equation} \label{mainestI}
		\|u\|_{L^\infty\big(A_{R_1}^{R_1+\epsilon}\big)}\leq C\left[1+\left(\int_{A_{R_1}^{R_1+2\epsilon}}|u|^q\diff x\right)^{\mu}\right],
		\end{equation}
		where $C,\mu>0$ are independent of $u$.
		\item[(ii)] Assume in addition that $L,a \in L^{\frac{q}{q-p}}(B(x_0,r_0)),$ $b \in L^{\frac{t}{t-1}} (B(x_0,r_0))$ and $L^{-s} \in L^1(B(x_0,r_0))$ for some ball $B(x_0,r_0)\subset A_{R_1}^{R_2},$ $s\in (\frac{N}{p},\infty)\cap [\frac{1}{p-1},\infty),$ $q\in [p,p_s^{\ast})$ and $t\in [1,q]\cap [1,\frac{p_s^{\ast}}{p}).$
		Then for any weak solution $u$ of problem \eqref{P},
		we have $u\in L^q(B(x_0,r_0))\cap L^{\infty}(B(x_0,\frac{r_0}{2}))$ and 
		\begin{equation*} 
		\|u\|_{L^\infty\big(B\big(x_0,\frac{r_0}{2}\big)\big)}\leq C\left[1+\left(\int_{B(x_0,r_0)}|u|^q\diff x\right)^{\mu}\right],
		\end{equation*}
		where $C,\mu>0$ are independent of $u$.  In particular, if $L,a \in L_{\loc}^{\frac{q}{q-p}}(A_{R_1}^{R_2}),$ $b \in L_{\loc}^{\frac{t}{t-1}} (A_{R_1}^{R_2})$ and $L^{-s} \in L_{\loc}^1(A_{R_1}^{R_2})$ then $u\in L^\infty_{\loc}(A_{R_1}^{R_2}).$
	\end{itemize}
\end{theorem}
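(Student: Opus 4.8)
The plan is to prove both parts simultaneously by working on a fixed ball $B(x_0,r_0)$ (for part (i) one covers $A_{R_1}^{R_1+\epsilon}$ by finitely many such balls, using the annular geometry and condition $\mathrm{(A1)}$ to control constants, plus the local embeddings of Corollary~\ref{2.corollary.local_embeddings}). First I would fix $0<\frac{r_0}{2}\le \tau_2<\tau_1\le r_0$ and a cutoff $\eta\in C_c^\infty(B(x_0,\tau_1))$ with $0\le \eta\le 1$, $\eta\equiv 1$ on $B(x_0,\tau_2)$ and $|\nabla\eta|\le \frac{C}{\tau_1-\tau_2}$. For a level $k\ge k_0$ (to be chosen large) set $w_k:=(u-k)^+$ and $A_k:=\{x\in B(x_0,\tau_1): u(x)>k\}$. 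By Proposition~\ref{prop.(u-k)^+}, $w_k\in\mathcal{D}_0^{1,p}(A_{R_1}^{R_2};L)$, so $\eta^p w_k$ is an admissible test function (after the usual approximation by $C_c^1$ functions). Testing the weak formulation of \eqref{P} with $\eta^p w_k$ and using the Young inequality to absorb the term with $\nabla\eta$ yields the Caccioppoli-type estimate
\begin{equation*}
\int L(x)|\nabla w_k|^p\eta^p\diff x\le C\int L(x)|\nabla\eta|^p w_k^p\diff x+C\int_{A_k}\big(a(x)|u|^{p-1}+b(x)\big)\eta^p w_k\diff x.
\end{equation*}

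Next I would convert the left-hand side into a Sobolev-type quantity. Using the Hölder inequality exactly as in Corollary~\ref{2.corollary.local_embeddings}(ii), with $L^{-s}\in L^1(B(x_0,r_0))$, one bounds $\int|\nabla(\eta w_k)|^{p_s}\diff x$ from above by $\big(\int L^{-s}\big)^{1/(s+1)}\big(\int L|\nabla(\eta w_k)|^p\big)^{s/(s+1)}$, and then the classical Sobolev embedding $W^{1,p_s}\hookrightarrow L^{p_s^\ast}$ gives $\|\eta w_k\|_{L^{p_s^\ast}}$. On the right-hand side, the term with $L|\nabla\eta|^p$ is handled by splitting $w_k^p=w_k^p\mathbf 1_{A_k}$ and applying Hölder with exponents $\frac{q}{p}$ and $\frac{q}{q-p}$, which uses $L\in L^{q/(q-p)}(B(x_0,r_0))$ and produces $\big(\int_{A_k}w_k^q\big)^{p/q}$ times $|A_k|$-free constants (the $q$-norm of $w_k$ is where the hypothesis $u\in L^q$ enters — first one must show $u\in L^q(B(x_0,r_0))$, which follows from $q<p_s^\ast$ and the local embedding $\mathcal{D}_0^{1,p}\hookrightarrow W^{1,p_s}\hookrightarrow L^{p_s^\ast}$). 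The lower-order terms involving $a$ and $b$ are estimated similarly: $\int_{A_k}a|u|^{p-1}\eta^p w_k$ by Hölder using $a\in L^{q/(q-p)}$ together with $|u|^{p-1}\eta w_k\le |u|^p$ on $A_k$ roughly, and $\int_{A_k}b\eta^p w_k$ by Hölder with $b\in L^{t/(t-1)}$ and $t<p_s^\ast/p$. Collecting everything and using the measure-decay interpolation $\int_{A_k}w_k^m\diff x\le \big(\int_{A_k}w_k^{p_s^\ast}\big)^{m/p_s^\ast}|A_k|^{1-m/p_s^\ast}$ for the relevant exponents $m\in\{p,q,pt\}$, one arrives at an inequality of the form
\begin{equation*}
\|\eta w_k\|_{L^{p_s^\ast}(B(x_0,\tau_1))}^p\le \frac{C}{(\tau_1-\tau_2)^p}\,\Phi\big(\|w_k\|_{L^q(A_k)}\big)\,|A_k|^{\gamma},
\end{equation*}
with $\gamma>0$ a positive power (coming from $1-q/p_s^\ast>0$ and $1-pt/p_s^\ast>0$) and $\Phi$ at most polynomial.

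Then I would run the De Giorgi iteration. Define increasing levels $k_j:=k_0(2-2^{-j})$ and shrinking radii $\tau_j:=\frac{r_0}{2}(1+2^{-j})$, and set $\mathcal A_j:=\big(\int_{A_{k_j}\cap B(x_0,\tau_j)}|u|^q\diff x\big)^{1/q}$ (or the cleaner quantity $\int_{A_{k_j}}(u-k_j)^q$). From $k_{j+1}-k_j=k_0 2^{-(j+1)}$ one controls $|A_{k_{j+1}}\cap B(x_0,\tau_{j+1})|$ by Chebyshev in terms of $\mathcal A_j$, and substituting into the displayed inequality above (restricted to $B(x_0,\tau_{j+1})$) yields a recursion $\mathcal A_{j+1}\le C\,b^{j}\,\mathcal A_j^{1+\beta}$ for some $b>1$, $\beta>0$, $C>0$ depending only on $N,p,q,s,r_0$ and the stated norms of $L,a,b,L^{-s}$. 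The standard fast-geometric-convergence lemma (e.g. \cite[Lemma in Ch. I]{Dra-Kuf-Nic} or the classical De Giorgi lemma) then gives: if $\mathcal A_0=\|(u-k_0)^+\|_{L^q(B(x_0,r_0))}^{?}$ is small enough — which one forces by choosing $k_0$ large in terms of $\int_{B(x_0,r_0)}|u|^q$ — then $\mathcal A_j\to 0$, i.e. $(u-2k_0)^+=0$ a.e. on $B(x_0,\tfrac{r_0}{2})$, hence $u\le 2k_0$ there; applying the same argument to $-u$ (with $f$ replaced by $-f(x,-\cdot)$, still satisfying $\mathrm{(F)}$) bounds $u$ below. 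Tracking the dependence of $k_0$ on $\int_{B(x_0,r_0)}|u|^q$ through the smallness condition produces the bound $\|u\|_{L^\infty(B(x_0,r_0/2))}\le C\big[1+\big(\int_{B(x_0,r_0)}|u|^q\big)^\mu\big]$ with $\mu$ an explicit exponent built from $\beta$ and $\gamma$. For part (i), covering $\overline{A_{R_1}^{R_1+\epsilon}}$ by finitely many balls $B(x_0,r_0)\subset A_{R_1}^{R_1+2\epsilon}$ and summing gives \eqref{mainestI}; the "in particular" clauses are immediate since the hypotheses then hold on every compactly contained ball.

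The main obstacle is not any single inequality but the bookkeeping of exponents that makes the iteration close: one must verify that the power $\gamma$ of $|A_k|$ is strictly positive \emph{simultaneously} for the contributions of $L$ (via $q<p_s^\ast$), of $a$ (again $q<p_s^\ast$), and of $b$ (via $t<p_s^\ast/p$), and that after interpolation the resulting self-improving exponent $1+\beta$ genuinely exceeds $1$; this is exactly where the ranges $q\in[p,p_s^\ast)$ and $t\in[1,q]\cap[1,\tfrac{p_s^\ast}{p})$ are used, and it is the step most prone to error. A secondary technical point is justifying that $\eta^p w_k$ is a legitimate test function in the weak formulation even though the latter is stated only for $\xi\in C_c^1$: this is handled by approximating $w_k$ in $\mathcal{D}_0^{1,p}(A_{R_1}^{R_2};L)$ as in Proposition~\ref{prop.(u-k)^+} and using $f(\cdot,u)\in L^1_{\loc}$ together with $L^{-s}\in L^1_{\loc}$ to pass to the limit on the $\nabla u\cdot\nabla(\eta^p w_k)$ term via the local embedding into $W^{1,p_s}$.
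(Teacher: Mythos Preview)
Your treatment of part~(ii) is essentially the paper's argument: a Caccioppoli inequality from testing with $\eta^p(u-k)^+$, conversion to a $W^{1,p_s}\hookrightarrow L^{\bar q}$ gain via H\"older against $L^{-s}$, and a De Giorgi recursion closed by Lemma~\ref{leRecur}. The exponent bookkeeping you flag as the ``main obstacle'' is exactly what the paper tracks, and your outline there is sound.

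The genuine gap is your reduction of part~(i) to part~(ii). You propose to cover $\overline{A_{R_1}^{R_1+\epsilon}}$ by finitely many balls $B(x_0,r_0)\subset A_{R_1}^{R_1+2\epsilon}$, but no such covering exists: points with $|x|$ close to $R_1$ cannot be captured by balls contained in the annulus, since any ball around such a point protrudes into $\{|x|<R_1\}$. Consequently an interior-ball argument alone gives no control of $u$ near $\partial B_{R_1}$, and Corollary~\ref{2.corollary.local_embeddings} (which applies only on $A_{r_1}^{r_2}$ with $r_1>R_1$) does not help there. The paper instead works \emph{directly} on the annular slab: it uses radial cut-offs $\xi$ with $\chi_{B_{r_1}}\le\xi\le\chi_{B_{r_2}}$, so that $(u-k)^+\xi^p$ is supported in $A_{R_1}^{r_2}$ and the Caccioppoli estimate holds on level sets $A_{k,r}=\{x\in A_{R_1}^{r}:u>k\}$ that reach all the way to the inner boundary. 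The Dirichlet condition on $\partial B_{R_1}$ is what makes this legitimate, and it enters through Lemma~\ref{well-defined}(i), which (via a cut-off and Poincar\'e on $A_{R_1}^{R_1+2\epsilon}$) yields the up-to-the-boundary embedding $\mathcal{D}_0^{1,p}(A_{R_1}^{R_2};L)\hookrightarrow W^{1,p_s}(A_{R_1}^{R_1+2\epsilon})\hookrightarrow L^{\bar q}(A_{R_1}^{R_1+2\epsilon})$. With these radial cut-offs and this embedding in place, the iteration is run with $\rho_n=R_1+\epsilon+\epsilon 2^{-n}$ and $k_n=k_\ast(1-2^{-(n+1)})$ exactly as you describe for balls. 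So your plan is correct for part~(ii), but for part~(i) you must replace the ball-covering step by the annular version just described.
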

To prove Theorem~\ref{apriorimainthm} we first prove the following lemma.
\begin{lemma}\label{well-defined}
		Assume that $\mathrm{(A1)}$ holds.
		\begin{itemize}
			\item[(i)]  If $L^{-s} \in L^1(A_{R_1}^{R_1+2\epsilon})$ for some $\epsilon\in (0,\frac{R_2-R_1}{2}),$ then $\mathcal{D}_0^{1,p}(A_{R_1}^{R_2};L)\hookrightarrow W^{1,p_s}(A_{R_1}^{R_1+2\epsilon})$ and hence $\mathcal{D}_0^{1,p}(A_{R_1}^{R_2};L)\hookrightarrow L^q(A_{R_1}^{R_1+2\epsilon})$ for $q\in [1,p_s^\ast).$
			\item[(ii)]  If $L^{-s} \in L^1(B(x_0,r_0))$ for some ball $B(x_0,r_0)\subset A_{R_1}^{R_2},$ then $\mathcal{D}_0^{1,p}(A_{R_1}^{R_2};L)\hookrightarrow W^{1,p_s}(B(x_0,r_0))$ and hence $\mathcal{D}_0^{1,p}(A_{R_1}^{R_2};L)\hookrightarrow L^q(B(x_0,r_0))$ for $q\in [1,p_s^\ast).$
		\end{itemize}
\end{lemma}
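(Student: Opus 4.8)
The plan is to reduce everything to the already-established continuous embedding of Theorem~\ref{2.theorem.embedding} together with the classical Sobolev embedding on the bounded Lipschitz domains $A_{R_1}^{R_1+2\epsilon}$ and $B(x_0,r_0)$. The first observation is that part (i) and part (ii) have identical structure, so I would prove a single local statement: for any bounded Lipschitz set $U$ compactly contained in $A_{R_1}^{R_2}$ (or, in case (i), touching the inner sphere $\{|x|=R_1\}$ but still Lipschitz), if $L^{-s}\in L^1(U)$ with $s\in(\tfrac{N}{p},\infty)\cap[\tfrac1{p-1},\infty)$, then $\mathcal{D}_0^{1,p}(A_{R_1}^{R_2};L)\hookrightarrow W^{1,p_s}(U)$. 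Here $p_s=\tfrac{ps}{s+1}$, which lies in $(1,p)$ because $s\geq\tfrac1{p-1}>0$, and one checks $p_s>1$ precisely when $s>\tfrac1{p-1}$ — the endpoint $s=\tfrac1{p-1}$ gives $p_s=1$, which is still admissible for the argument.

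The key computation is exactly the one used in the proof of Corollary~\ref{2.corollary.local_embeddings}(ii): for $u\in\mathcal{D}_0^{1,p}(A_{R_1}^{R_2};L)$ write $|\nabla u|^{p_s}=\big(L(x)|\nabla u|^p\big)^{p_s/p}L(x)^{-p_s/p}$ and apply H\"older's inequality on $U$ with conjugate exponents $\tfrac{p}{p_s}$ and its dual. Since $\tfrac{p}{p_s}=\tfrac{s+1}{s}$, the dual exponent is $s+1$, and the factor $L^{-p_s/p}$ raised to the power $s+1$ is $L^{-(s+1)p_s/p}=L^{-s}$, which is integrable on $U$ by hypothesis. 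This gives
\begin{equation*}
\int_U|\nabla u|^{p_s}\diff x\leq\left(\int_U L^{-s}(x)\diff x\right)^{\frac1{s+1}}\left(\int_U L(x)|\nabla u|^p\diff x\right)^{\frac{s}{s+1}}\leq\left(\int_U L^{-s}(x)\diff x\right)^{\frac1{s+1}}\|u\|^{p_s},
\end{equation*}
using $\int_U L|\nabla u|^p\le\|u\|^p$. For the $L^{p_s}(U)$ part of the $W^{1,p_s}$ norm I would invoke the embedding $\mathcal{D}_0^{1,p}(A_{R_1}^{R_2};L)\hookrightarrow L^p(U)$, which in case (i) follows from Theorem~\ref{2.theorem.embedding} after choosing an admissible weight $w$ bounded below on $U$ (mirroring the proof of Corollary~\ref{2.corollary.local_embeddings}(i), where assumption $\mathrm{(A1)}$ is what is available here), and in case (ii) is literally Corollary~\ref{2.corollary.local_embeddings}(i). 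Since $p_s<p$ and $U$ has finite measure, $L^p(U)\hookrightarrow L^{p_s}(U)$, so the $L^{p_s}(U)$ norm is controlled by $\|u\|$. Adding the two estimates yields $\mathcal{D}_0^{1,p}(A_{R_1}^{R_2};L)\hookrightarrow W^{1,p_s}(U)$.

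Finally, $U$ is a bounded domain with Lipschitz boundary (a spherical annulus or a ball), so the standard Sobolev embedding theorem gives $W^{1,p_s}(U)\hookrightarrow L^q(U)$ for every $q\in[1,(p_s)^\ast]$ when $p_s<N$ and for every $q\in[1,\infty)$ when $p_s\geq N$; in the notation of the paper this is exactly $q\in[1,p_s^\ast)$ (the strict inequality absorbing the case $p_s\geq N$). Composing the two embeddings proves both (i) and (ii). The only point requiring a little care is the geometry of the domain in case (i): $A_{R_1}^{R_1+2\epsilon}$ has its inner boundary on the sphere $|x|=R_1$, but since $R_1>0$ this is a smooth (hence Lipschitz) boundary, so the Sobolev embedding and the auxiliary $L^p$-embedding both apply without change. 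I do not anticipate a genuine obstacle here — the lemma is essentially bookkeeping that isolates the two local embeddings needed later in the De Giorgi iteration; the mildly delicate step is verifying the arithmetic $\tfrac{p}{p_s}=\tfrac{s+1}{s}$ and $-(s+1)\tfrac{p_s}{p}=-s$ so that the H\"older exponents match the integrability hypothesis $L^{-s}\in L^1$ exactly, and confirming $p_s\ge 1$ from $s\ge\tfrac1{p-1}$.
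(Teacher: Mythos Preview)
Your argument for part (ii) is correct and coincides with the paper's approach (which simply cites the embedding in \cite[p.~25]{Dra-Kuf-Nic}); the H\"older step is exactly Corollary~\ref{2.corollary.local_embeddings}(ii), and restriction to the ball is harmless.

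Part (i), however, has a genuine gap. You need $\mathcal{D}_0^{1,p}(A_{R_1}^{R_2};L)\hookrightarrow L^p(A_{R_1}^{R_1+2\epsilon})$ to control the zeroth-order part of the $W^{1,p_s}$ norm, and you propose to obtain it by ``mirroring'' the proof of Corollary~\ref{2.corollary.local_embeddings}(i): choose $w(r)=P^{-1}(r)(r+1)^{-(N+1)}$ and use Theorem~\ref{2.theorem.embedding}. But that argument relies on $P$ being bounded on the interval, which in Corollary~\ref{2.corollary.local_embeddings}(i) comes from $r_1>R_1$. Here the annulus touches $R_1$, and under $(\mathrm{A1})$ alone one may have $\int_{R_1}^{\xi}\rho^{1-p'}(\tau)\diff\tau=\infty$; then $P(r)=\big(\int_r^{R_2}\rho^{1-p'}\big)^{p-1}\to\infty$ as $r\to R_1^+$, so $w(r)\to 0$ and no admissible $w$ bounded below on $(R_1,R_1+2\epsilon)$ exists. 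This situation is compatible with the hypothesis $L^{-s}\in L^1(A_{R_1}^{R_1+2\epsilon})$: take $L$ equal to $1$ on most of each sphere $\{|x|=r\}$ but equal to $(r-R_1)^{p-1}$ on a cap of relative measure $(r-R_1)^{s(p-1)}$; the best radial minorant is $v(r)=(r-R_1)^{p-1}$, for which $v^{-1/(p-1)}$ is not integrable at $R_1$, yet $L^{-s}$ is.

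The paper repairs this by exploiting the zero boundary condition at $\partial B_{R_1}$: take $u_n\in C_c^1(A_{R_1}^{R_2})$ approximating $u$, multiply by a cutoff $\phi$ with $\chi_{B_{R_1+\epsilon}}\le\phi\le\chi_{B_{R_1+3\epsilon/2}}$ so that $\phi u_n\in C_c^1(A_{R_1}^{R_1+2\epsilon})$, apply the Poincar\'e inequality there, and estimate $|\nabla(\phi u_n)|^{p_s}$ by the H\"older step plus $\int_{A_{R_1+\epsilon}^{R_1+3\epsilon/2}}|u_n|^{p_s}$, where Corollary~\ref{2.corollary.local_embeddings}(i) \emph{does} apply since this inner annulus is strictly interior. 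Passing to the limit via Fatou gives $u\in L^{p_s}(A_{R_1}^{R_1+\epsilon})$, and the remaining piece $A_{R_1+\epsilon}^{R_1+2\epsilon}$ is again interior. Your H\"older estimate for $\nabla u$ and the final Sobolev embedding are fine; it is only the $L^{p_s}$ control of $u$ near $R_1$ that needs this extra device.
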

\begin{proof}
(i) Let $u\in \mathcal{D}_0^{1,p}(A_{R_1}^{R_2};L)$ and let $\{u_n\}\subset C_c^1(A_{R_1}^{R_2})$ such that $u_n\to u$ in $\mathcal{D}_0^{1,p}(A_{R_1}^{R_2};L)$ as $n\to\infty.$ 
By Corollary~\ref{2.corollary.local_embeddings} (i), up to a subsequence we have $u_n\to u$ a.e. in $A_{R_1}^{R_2}.$ Let $\phi\in C^\infty(\mathbb{R}^N)$ such that $\chi_{B_{R_1+\epsilon}}\leq \phi\leq \chi_{B_{R_1+\frac{3\epsilon}{2}}},$ where $\chi_\Omega$ denotes the characteristic function on the set $\Omega.$ Then $\phi u_n\in C_c^1(A_{R_1}^{R_1+2\epsilon}).$ Thus, by Poincar\'e's inequality there exists a positive constant $C$ such that
$$\int_{A_{R_1}^{R_1+2\epsilon}}|\phi u_n|^{p_s}\diff x\leq C \int_{A_{R_1}^{R_1+2\epsilon}}|\nabla(\phi u_n)|^{p_s}\diff x,\quad \forall n\in\mathbb{N}.$$
Hence, applying H\"older's inequality and the embedding $\mathcal{D}_0^{1,p}(A_{R_1}^{R_2};L)\hookrightarrow L^p(A_{R_1+\epsilon}^{R_1+\frac{3\epsilon}{2}})$ we obtain from the last inequality that
\begin{align*}
&\int_{A_{R_1}^{R_1+\epsilon}}|u_n|^{p_s}\diff x\leq C_1 \int_{A_{R_1}^{R_1+2\epsilon}}|\nabla u_n|^{p_s}\diff x+ C_1\int_{A_{R_1+\epsilon}^{R_1+\frac{3\epsilon}{2}}}|u_n|^{p_s}\diff x\\
&\leq C_1 \left(\int_{A_{R_1}^{R_1+2\epsilon}}L^{-s}(x)\diff x\right)^{\frac{1}{s+1}}\left(\int_{A_{R_1}^{R_1+2\epsilon}}L(x)|\nabla u_n|^p\diff x\right)^{\frac{s}{s+1}}+ C_2\left(\int_{A_{R_1+\epsilon}^{R_1+\frac{3\epsilon}{2}}}|u_n|^p\diff x\right)^{\frac{s}{s+1}}\\
&\leq C_3 \left(\int_{A_{R_1}^{R_2}}L(x)|\nabla u_n|^p\diff x\right)^{\frac{s}{s+1}},\quad \forall n\in\mathbb{N}.
\end{align*}
Letting $n\to\infty$ and invoking Fatou's lemma we obtain the above estimate for $u_n=u$.  Combining this with the embedding $\mathcal{D}_0^{1,p}(A_{R_1}^{R_2};L)\hookrightarrow L^p(A_{R_1+\epsilon}^{R_1+2\epsilon})$ and the estimate
 $$\int_{A_{R_1}^{R_1+2\epsilon}}|\nabla u|^{p_s}\diff x
\leq \left(\int_{A_{R_1}^{R_1+2\epsilon}}L^{-s}(x)\diff x\right)^{\frac{1}{s+1}}\left(\int_{A_{R_1}^{R_1+2\epsilon}}L(x)|\nabla u|^p\diff x\right)^{\frac{s}{s+1}},$$
we deduce $\|u\|_{ W^{1,p_s}(A_{R_1}^{R_1+2\epsilon})}\leq C_4\|u\|$ for some constant $C_4$ independent of $u$.

(ii) The conclusion is clear in view of \cite[p. 25, the embedding (1.22)]{Dra-Kuf-Nic}. 
\end{proof}

To employ the De Giorgi iteration, we need the following key lemma. The special case $\delta_1=\delta_2$ was obtained in \cite[Ch.2, lemma 4.7] {Ladyzhenskaya}.  

\begin{lemma}(\cite[Lemma 4.3]{Ho-Sim})\label{leRecur}
	Let $\{J_n\}_{n=0}^{\infty}$  be a sequence  of positive numbers satisfying the recursion inequality
	\begin{equation}\label{Recur.Int}
		J_{n+1}\leq K \eta^n \left( J_{n}^{1+\delta_1}+J_{n}^{1+\delta_2}\right),\quad n=0,1,2,\cdots,
	\end{equation}
	for some $\eta>1, \ K>0 \ and\ \delta_2 \geq \delta_1 >0$. If $J_{0}\leq \min\left(1,(2K)^{\frac{-1}{\delta_1}}\ \eta^{\frac{-1}{\delta_1^{2}}}\right) $ or
	\begin{equation*}\label{Y0.Le}
		J_{0}\leq \min\left((2K)^{\frac{-1}{\delta_1}}\ \eta^{\frac{-1}{\delta_1^{2}}},(2K)^{\frac{-1}{\delta_2}}\ \eta^{-\frac{1}{\delta_1\delta_2}-\frac{\delta_2-\delta_1}{\delta_2^{2}}}\right),
	\end{equation*}
	then there exists $n \in \mathbb{N}\cup \{0\}=:\mathbb{N}_0$ such that $J_n\leq 1$. Moreover,
	$$
	J_{n}\leq \min\left(1,(2K)^{\frac{-1}{\delta_1}}\ \eta^{\frac{-1}{\delta_1^{2}}}\ \eta^{\frac{-n}{\delta_1}}\right),\ \forall n\geq n_0,
	$$
	where $n_0$ is the smallest $n\in \mathbb{N}_0$ for which $J_n\leq 1$. In particular, $J_n \to 0$ as $n \to \infty$.
\end{lemma}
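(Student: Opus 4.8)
The plan is to reduce the two–exponent recursion \eqref{Recur.Int} to two copies of the classical one–exponent Ladyzhenskaya recursion: one governing the regime $J_n\ge 1$, where $J_n^{1+\delta_2}$ is the dominant term, and one governing the regime $J_n<1$, where $J_n^{1+\delta_1}$ dominates; these are then glued at the first index $n_0$ at which $J$ falls below $1$. Set $b:=2K$ and, for $n\in\mathbb{N}_0$, introduce the reference sequences
\[
a_n:=b^{-1/\delta_1}\,\eta^{-1/\delta_1^{2}}\,\eta^{-n/\delta_1},\qquad
c_n:=b^{-1/\delta_2}\,\eta^{-1/\delta_2^{2}}\,\eta^{-n/\delta_2},
\]
which a direct substitution identifies as the exact positive solutions of $a_{n+1}=b\eta^{n}a_n^{1+\delta_1}$ and $c_{n+1}=b\eta^{n}c_n^{1+\delta_2}$ respectively; both are strictly decreasing to $0$, and $a_0$ is exactly the constant $(2K)^{-1/\delta_1}\eta^{-1/\delta_1^{2}}$ from the hypotheses.

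The heart of the argument is a \emph{propagation} statement: if $J_m\le\min(1,a_m)$ for some $m$, then $J_n\le\min(1,a_n)$ for all $n\ge m$, and hence $J_n\le a_m\,\eta^{-(n-m)/\delta_1}\to 0$ together with the claimed quantitative bound. This I would prove by induction: from $J_n\le 1$ and $\delta_1\le\delta_2$ one has $J_n^{1+\delta_2}\le J_n^{1+\delta_1}$, so \eqref{Recur.Int} gives $J_{n+1}\le b\eta^{n}J_n^{1+\delta_1}\le\min\bigl(b\eta^{n},\,b\eta^{n}a_n^{1+\delta_1}\bigr)=\min(b\eta^{n},a_{n+1})$; one then observes the dichotomy that either $a_{n+1}\le1$, or else $a_{n+1}>1$, which forces $b<\eta^{-(n+1)-1/\delta_1}$ and hence $b\eta^{n}<1$, so in both cases $J_{n+1}\le\min(1,a_{n+1})$. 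Thus the whole lemma reduces to exhibiting one index $m$ with $J_m\le\min(1,a_m)$; this $m$ will be the first index $n_0$ at which $J_{n_0}\le1$, which is why the final estimate is asserted only for $n\ge n_0$. Under the first hypothesis, and also under the second hypothesis whenever $J_0\le1$ (the second hypothesis also includes $J_0\le a_0$), one may take $m=n_0=0$ and be done.

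The only substantial case is the second hypothesis with $J_0>1$; note that then $J_0\le(2K)^{-1/\delta_2}\eta^{-\frac{1}{\delta_1\delta_2}-\frac{\delta_2-\delta_1}{\delta_2^{2}}}\le c_0$ (the last inequality because $(1+\delta_1)(\delta_2-\delta_1)\ge 0$), so in particular $c_0>1$ and $-\log_\eta(2K)>1/\delta_2$. Since $J_n>1$ for all $n<n_0$, on that range \eqref{Recur.Int} gives $J_{n+1}\le b\eta^{n}J_n^{1+\delta_2}$, and comparison with the $\delta_2$–recursion started from $X_0:=J_0$ (the map $t\mapsto b\eta^{n}t^{1+\delta_2}$ being increasing) yields $J_n\le X_n\le c_n$ for $0\le n\le n_0$, where $X_{n+1}=b\eta^{n}X_n^{1+\delta_2}$. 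Writing $X_n=b^{-1/\delta_2}\eta^{-n/\delta_2}\zeta_n$ reduces this to $\zeta_{n+1}=\eta^{1/\delta_2}\zeta_n^{1+\delta_2}$ with $\zeta_0=J_0b^{1/\delta_2}\le\eta^{-1/\delta_2^{2}}$; as $\eta^{-1/\delta_2^{2}}$ is the fixed point of the increasing map $t\mapsto\eta^{1/\delta_2}t^{1+\delta_2}$, the sequence $\{\zeta_n\}$ stays below it, is decreasing, and in fact $\zeta_n=\eta^{-1/\delta_2^{2}}\theta_0^{(1+\delta_2)^{n}}$ with $\theta_0:=\zeta_0\eta^{1/\delta_2^{2}}\le1$ (strictly $<1$ when $\delta_1<\delta_2$). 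Hence $X_n\le c_n\to0$, so $n_0$ is finite, and moreover $n_0\le N_0:=\lceil-\log_\eta(2K)-1/\delta_2\rceil$ (the index at which $c_n$ itself crosses $1$), since $X_{n_0-1}\ge J_{n_0-1}>1$ forces $c_{n_0-1}>1$.

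It remains only to check that at $n_0$ one has $J_{n_0}\le\min(1,a_{n_0})$, which then feeds into the propagation step with $m=n_0$ and finishes the proof. If $a_{n_0}\ge1$ this is immediate from $J_{n_0}\le1$; since $a_n\le1\iff n\ge-\log_\eta(2K)-1/\delta_1$ and $\delta_1\le\delta_2$, the remaining (delicate) case $a_{n_0}<1$ can occur only for $n_0$ in the short window $\bigl(-\log_\eta(2K)-1/\delta_1,\,N_0\bigr]$, of length $\frac{\delta_2-\delta_1}{\delta_1\delta_2}$. On that window one must verify $J_{n_0}\le X_{n_0}=c_{n_0}\theta_0^{(1+\delta_2)^{n_0}}\le a_{n_0}$, i.e.\ $\theta_0^{(1+\delta_2)^{n_0}}\le a_{n_0}/c_{n_0}$; because $n\mapsto a_n/c_n$ is nonincreasing, the right-hand side is bounded below there by an explicit (negative) power of $\eta$ depending only on $\delta_1$ and $\delta_2$, and the precise exponent $-\frac{1}{\delta_1\delta_2}-\frac{\delta_2-\delta_1}{\delta_2^{2}}$ in the second hypothesis is calibrated exactly so that the resulting bound on $\theta_0$ — sharpened, when $n_0$ is large, by the super-exponential factor $(1+\delta_2)^{n_0}$ in its exponent — makes this inequality hold. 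I expect this last matching of the $\delta_2$–phase supersolution to the $\delta_1$–phase threshold $a_{n_0}$ to be the main obstacle; everything else is the standard Ladyzhenskaya iteration.
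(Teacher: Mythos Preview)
The paper does not prove this lemma; it is quoted verbatim from \cite{Ho-Sim} and used as a black box. So there is no ``paper's proof'' to compare against, and your write-up has to stand on its own.

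Your two-phase scheme (compare with the exact $\delta_2$-solution $c_n$ while $J_n>1$, then propagate along the exact $\delta_1$-solution $a_n$ once $J_n\le1$) is correct and your propagation step is fine. The only genuine gap is the one you flag yourself: in the second hypothesis with $J_0>1$, showing $J_{n_0}\le a_{n_0}$ when $a_{n_0}<1$. Here your bound $n_0\le N_0=\lceil N_2\rceil$ (from $c_{n_0-1}>1$) is too crude, and the resulting window of length $(\delta_2-\delta_1)/(\delta_1\delta_2)$ makes the matching look hard. In fact the window has length less than $1$: a short computation gives $\gamma_{\min}:=-\log_\eta\theta_0\big|_{J_0=\text{second bound}}=(\delta_2-\delta_1)(1+\delta_1)/(\delta_1\delta_2^{2})=(1+\delta_1)\,(N_2-N_1)/\delta_2$, so
\[
\gamma(1+\delta_2)^{N_1}\ \ge\ \gamma_{\min}\ =\ (1+\delta_1)\,\frac{N_2-N_1}{\delta_2}\ >\ \frac{N_2-N_1}{\delta_2}\ =\ \log_\eta c_{N_1},
\]
i.e.\ $X_{N_1}<1$. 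By monotonicity of $X$ this forces $n_0\le\lceil N_1\rceil$. Thus the only ``hard'' case is $n_0=\lceil N_1\rceil$ with $N_1\notin\mathbb Z$, where $n_0-N_1\in(0,1)$ and $n_0\ge1$; the needed inequality $X_{n_0}\le a_{n_0}$ reduces (after the same computation you set up) to
\[
(1+\delta_1)(1+\delta_2)^{n_0}\ \ge\ \delta_2(n_0-N_1)+1,
\]
and the left side is at least $(1+\delta_1)(1+\delta_2)>1+\delta_2$, while the right side is strictly less than $1+\delta_2$. That closes the gap. With this one extra observation your argument is complete and gives exactly the stated bound $J_n\le\min(1,a_n)$ for $n\ge n_0$.
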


\begin{proof}[Proof of Theorem~\ref{apriorimainthm}]
(i) Let $u$ be a weak solution of problem \eqref{4defweaksol}.  In the rest of the proof of the theorem, the constant $C$ might vary from line to line, but will be always independent of $L,\ a,\ b,\ \epsilon$ and $u$. Without loss of generality we may assume that $t>\frac{q}{p}.$

\textbf{Step 1: Caccioppoli-type inequality}. Denote
\begin{equation}\label{4.notations}
 \alpha:= \|L\|_{L^{\frac{q}{q-p}}\big(A^{R_1+2\epsilon}_{R_1}\big)}, \ \beta:= \|a\|_{L^{\frac{q}{q-p}}\big(A^{R_1+2\epsilon}_{R_1}\big)}  \text{ and } \gamma:= \|b\|_{L^{\frac{t}{t-1}}\big(A^{R_1+2\epsilon}_{R_1}\big)}, 
\end{equation}
and for $k>0, \ r \in (R_1,R_2)$, denote
$$A_{k,r}:= \{x \in A^{r}_{R_1}: u(x)>k\}.$$ 
We claim that there exists a positive constant $C$ such that, for any $r_1,\ r_2$ satisfying  $R_1+\epsilon\leq r_1<r_2\leq R_1+2\epsilon$ and for any $k>0$ we have
\begin{align}
\int_{A_{k,r_1}}L(x) |\nabla u|^p \diff x &\leq C(\alpha +\beta\epsilon^p) \bigg(\int_{A_{k,r_2}} \bigg(\frac{u-k}{r_2-r_1} \bigg)^q\diff x \bigg)^{\frac{p}{q}} + \notag \\&\quad + p\gamma \bigg( \int_{A_{k,r_2}} (u-k)^q\diff x \bigg)^{\frac{1}{q}} |A_{k,r_2}|^{\frac{q-t}{qt}} +C\beta k^p |A_{k,r_2}|^{\frac{p}{q}}.    \label{Caccioppoli}
\end{align}
To this end, let $\xi \in C^1(\mathbb{R}^N)$ such that $$\chi_{B_{r_1}} \leq \xi \leq \chi_{B_{r_2}} \text { and } |\nabla \xi | \leq \frac{2}{r_2-r_1}.$$ By an approximation argument, we can show that for $\widetilde{u}\in \mathcal{D}_0^{1,p}(A_{R_1}^{R_2};L)$ and $\widetilde{\xi}\in C^1(\mathbb{R}^N)$ with $\chi_{B_{r_1}} \leq \widetilde{\xi} \leq \chi_{B_{r_2}},$ we have $\widetilde{u}\widetilde{\xi} \in \mathcal{D}_0^{1,p}(A_{R_1}^{R_2};L)$ and $\widetilde{u}\widetilde{\xi}$ is a test function for \eqref{P}. By this and Proposition~\ref{prop.(u-k)^+}, we can use $(u-k)^+ \xi^p$ as a test function in \eqref{P} and get
	\begin{equation*}
	\int_{A_{R_1}^{R_2}} L(x) |\nabla u|^{p-2} \nabla u \cdot \nabla((u-k)^+\xi^p) \diff x = \int_{A_{R_1}^{R_2}}  f(x,u)(u-k)^+\xi^p \diff x.
	\end{equation*}
By the assumption on $f$, the last equality leads to 
	\begin{align*}
	\int_{A_{k,r_2}} L(x)|\nabla u|^{p} \xi^p \diff x  \leq &-p\int_{A_{k,r_2}}L(x) |\nabla u|^{p-2} (\nabla u \cdot \nabla \xi) (u-k)\xi^{p-1} \diff x \\  
	&+ \int_{A_{k,r_2}} a(x)|u|^{p-1} (u-k) \xi^p \diff x  + \int_{A_{k,r_2}} b(x)(u-k)\xi^p \diff x.
	\end{align*}
That is 
	\begin{align}
	\int_{A_{k,r_2}} L(x) |\nabla u|^p \xi^p \diff x &\leq p \int_{A_{k,r_2}} L(x) |\nabla u|^{p-1} \xi^{p-1} |\nabla \xi| (u-k) \diff x \notag\\
	&\quad + \int_{A_{k,r_2}} a(x)u^p \diff x + \int_{A_{k,r_2}}  b(x)(u-k) \diff x.	    \label{aprioriest1}	\end{align}
Now we estimate three integrals on the right hand side (RHS for short) of \eqref{aprioriest1} separately. For simplicity, denote
\[ J:=\int_{A_{k,r_2}} L(x) |\nabla u|^p \xi^p \diff x \text{ and } Q:= \int_{A_{k,r_2}} \bigg(\frac{u-k}{r_2-r_1} \bigg)^q\diff x .\]
We estimate the first integral on RHS of \eqref{aprioriest1}, using Young's inequality and H\"older's inequality, as follows 
	\begin{align}
	\int_{A_{k,r_2}}& L(x) |\nabla u|^{p-1} \xi^{p-1} |\nabla \xi| (u-k) \diff x\notag\\
	& \leq \frac{p-1}{p}\int_{A_{k,r_2}} L(x)\frac{1}{p} |\nabla u|^p \xi^p \diff x  +  \frac{1}{p} \int_{A_{k,r_2}} L(x)p^{p-1}  (|\nabla \xi|(u-k))^p \diff x   \notag \\
	&\leq \frac{p-1}{p^2} J + 2^p p^{p-2} \int_{A_{k,r_2}} L(x) \bigg(\frac{u-k}{r_2-r_1}\bigg)^p \diff x    \notag \\
	&\leq \frac{p-1}{p^2} J +2^p p^{p-2} \|L\|_{L^{\frac{q}{q-p}}\big(A^{R_1+2\epsilon}_{R_1}\big)} 
	\bigg(\int_{A_{k,r_2}} \bigg( \frac{u-k}{r_2-r_1}\bigg)^q \diff x\bigg)^{\frac{p}{q}}   \notag \\
	&= \frac{p-1}{p^2} J + 2^p p^{p-2} \alpha Q^{\frac{p}{q}}.   \label{aprioriiest2}
	\end{align}
	Using H\"older's inequality, we estimate the second integral on RHS of \eqref{aprioriest1}
	\begin{align}
	\int_{A_{k,r_2}} a(x) u^p \diff x &\leq \|a\|_{L^{\frac{q}{q-p}}\big(A^{R_1+2\epsilon}_{R_1}\big)}  \bigg(\int_{A_{k,r_2}} u^q \diff x \bigg)^{\frac{p}{q}}   \notag \\ 
	&\leq \beta  \bigg[\int_{A_{k,r_2}} 2^q\big((u-k)^q+k^q\big) \diff x \bigg]^{\frac{p}{q}}   \notag \\ 
	&\leq C \beta \epsilon^pQ^{\frac{p}{q}} + C \beta  k^p |A_{k,r_2}|^{\frac{p}{q}}.     \label{aprioriest3}
	\end{align} 
	Using H\"older's inequality again, we estimate the third integral on RHS of \eqref{aprioriest1}
	\begin{align}
	\int_{A_{k,r_2}} b(x)(u-k) \diff x &\leq \|b\|_{L^{\frac{t}{t-1}}\big(A^{R_1+2\epsilon}_{R_1}\big)} \bigg( \int_{A_{k,r_2}} (u-k)^t \diff x\bigg)^{\frac{1}{t}}    \notag\\
	&\leq \gamma \bigg( \int_{A_{k,r_2}} (u-k)^q \diff x\bigg)^{\frac{1}{q}}  |A_{k,r_2}|^{\frac{q-t}{qt}}.   \label{aprioriest4}
	\end{align}
	From \eqref{aprioriest1}--\eqref{aprioriest4}, we obtain
	$$J \leq \frac{p-1}{p} J + 2^p p^{p-1} \alpha Q^{\frac{p}{q}} + C \beta\epsilon^p Q^{\frac{p}{q}}+ C \beta  k^p |A_{k,r_2}|^{\frac{p}{q}}+ \gamma \bigg( \int_{A_{k,r_2}} (u-k)^q \diff x\bigg)^{\frac{1}{q}}  |A_{k,r_2}|^{\frac{q-t}{qt}}.$$
	Hence 
	\begin{align*}
	J \leq C(\alpha + \beta\epsilon^p ) Q^{\frac{p}{q}} +p\gamma \bigg(\int_{A_{k,r_2}}(u-k)^q \diff x\bigg)^{\frac{1}{q}} |A_{k,r_2}|^{\frac{q-t}{qt}} +C \beta k^p |A_{k,r_2}|^{\frac{p}{q}}.
	\end{align*}
From this and the definitions of $J,\ Q$ and $\xi$ we obtain \eqref{Caccioppoli}. 
	
\textbf{Step 2: Definition of recursive sequence and recursion inequality.}	Define the recursive sequence $\{J_n\}$ as
\[J_n := \int_{A_{k_n,\rho_n}} (u-k_n)^q \diff x, \ \ \forall n \in \mathbb{N}_0,\]
where $\rho_n:=R_1 +\epsilon+ \frac{\epsilon}{2^n}$ and $k_n:= k_{\ast} \big(1-\frac{1}{2^{n+1}}\big)$ for some $k_{\ast}>1$, to be specified later. 
We also denote $\bar{\rho}_n:= \frac{\rho_n+\rho_{n+1}}{2}$ ($n\in\mathbb{N}_0$). Clearly, $ \rho_n \downarrow R_1+\epsilon,\ k_n \uparrow k_{\ast},\ R_1+\epsilon < \rho_n \leq R_1+2\epsilon$ and $\frac{k_{\ast}}{2} \leq k_n < k_{\ast}$ for all $n \in \mathbb{N}_0.$ Moreover, notice that \begin{align*}
\rho_n-\bar{\rho}_n= \frac{\epsilon}{2^{n+2}},\  k_{n+1}-k_n = \frac{k_{\ast}}{2^{n+2}}, \ \ \forall n \in \mathbb{N}_0.
\end{align*}
Next, we obtain a recursion inequality of the form \eqref{Recur.Int}. Fix $\zeta \in C^1(\mathbb{R})$, such that $\chi_{(-\infty,1)} \leq \zeta \leq \chi_{\big(-\infty,\frac{3}{2}\big)}$ and $|\zeta'| \leq 4$. Define 
\[\zeta_n(x)= \zeta\bigg( \frac{2^{n+1}}{\epsilon}(|x|-R_1-\epsilon)\bigg), \ \ \forall n \in \mathbb{N}_0.\]
Thus, $\zeta_n\in C^1(\mathbb{R}^N)$ and satisfies
\begin{equation*}
\chi_{B_{\rho_{n+1}}}\leq \zeta_n\leq \chi_{B_{\bar{\rho}_n}} \text{ and }|\nabla \zeta_n | \leq \frac{2^{n+3}}{ \epsilon}, \ \ \forall n \in \mathbb{N}_0.   
\end{equation*}
Before estimating $J_{n+1}$ in terms of $J_n$ we note that 
\begin{align}
\int_{A_{k_{n+1},\bar{\rho}_{n}}} (u-k_{n+1})^q \diff x  \leq \int_{A_{k_{n+1},\rho_n}} (u-k_{n+1})^q \diff x \leq J_n,      \label{aprioriest13}
\end{align}
also 
\begin{align}
\big|A_{k_{n+1},{\rho}_{n+1}}\big| \leq \big|A_{k_{n+1},\bar{\rho}_{n}}\big| \leq \big|A_{k_{n+1},{\rho}_{n}}\big| &\leq \int_{A_{k_{n+1},\rho_n}} \bigg(\frac{u-k_n}{k_{n+1}-k_n}\bigg)^q \diff x \leq 2^{(n+2)q}k_{\ast}^{-q}J_n.    \label{aprioriest14}
\end{align}
Furthermore, we will need the following simple inequality
\begin{equation}\label{simple.inq}
(x+y)^m\leq C_m(x^m+y^m), \quad \forall x, y\geq 0\quad (m\geq 0).
\end{equation}
Now, fix  $\bar{q} \in (tp,p_s^{\ast})$. Using H\"older's inequality we estimate
\begin{equation} 
J_{n+1} = \int_{A_{k_{n+1},\rho_{n+1}}} (u-k_{n+1})^q \diff x \leq \bigg(\int_{A_{k_{n+1},\rho_{n+1}}} (u-k_{n+1})^{\bar{q}} \diff x\bigg)^{\frac{q}{\bar{q}}}\bigg|A_{k_{n+1},\rho_{n+1}}\bigg| ^{\frac{\bar{q}-q}{\bar{q}}}.  \label{aprioriest7}
\end{equation}
On the other hand, in view of Lemma~\ref{well-defined} and Sobolev's embedding, we get 
\begin{align}
\bigg(\int_{A_{k_{n+1},\rho_{n+1}}} &(u-k_{n+1})^{\bar{q}} \diff x\bigg)^{\frac{1}{\bar{q}}} = \bigg(\int_{A_{k_{n+1},\rho_{n+1}}}\big((u-k_{n+1})\zeta_n\big)^{\bar{q}} \diff x\bigg)^{\frac{1}{\bar{q}}}   \notag\\
&\leq \bigg(\int_{A_{R_1}^{R_1+2\epsilon}} \big((u-k_{n+1})^+\zeta_n\big)^{\bar{q}} \diff x\bigg)^{\frac{1}{\bar{q}}}   \notag\\
&\leq C_{\epsilon} \bigg[ \bigg(\int_{A_{R_1}^{R_1+2\epsilon}} \big((u-k_{n+1})^+\zeta_n\big)^{p_s} \diff x\bigg)^{\displaystyle\frac{1}{p_s}} +  \notag \\ &\qquad \qquad \qquad+\bigg(\int_{A_{R_1}^{R_1+2\epsilon}} |\nabla\big((u-k_{n+1})^+\zeta_n)|^{p_s} \diff x\bigg)^{\displaystyle\frac{1}{p_s}} \bigg],     \label{aprioriest8}
\end{align}
here $C_{\epsilon}$ is the embedding constant for $W^{1,p_s}(A_{R_1}^{R_1+2\epsilon}) \hookrightarrow L^{\bar{q}}(A_{R_1}^{R_1+2\epsilon})$. Using H\"older's inequality, we have
\begin{align*}
\int_{A_{R_1}^{R_1+2\epsilon}} \big((u-k_{n+1})^+\zeta_n\big)^{p_s} \diff x &\leq \int_{A_{k_{n+1},\bar{\rho}_{n}}} \displaystyle (u-k_{n+1})^{p_s} \diff x      \\
&\leq  \bigg(\int_{A_{k_{n+1},\bar{\rho}_{n}}} (u-k_{n+1})^q\diff x \bigg)^{\displaystyle\frac{p_s}{q}} \big|A_{k_{n+1},\bar{\rho}_{n}}\big|^{\displaystyle\frac{q- p_s}{q}}.
\end{align*}
Combining this with \eqref{aprioriest13} and \eqref{aprioriest14} we obtain
\begin{equation}
\bigg(\int_{A_{R_1}^{R_1+2\epsilon}} \big((u-k_{n+1})^+\zeta_n\big)^{p_s} \diff x\bigg)^{\displaystyle \frac{1}{p_s}} \leq 2^{\displaystyle\frac{2(q-p_s)}{p_s}} 2^{\displaystyle\frac{n(q-p_s)}{p_s}} k_\ast^{-\displaystyle\frac{q-p_s}{p_s}}J_n^{\displaystyle\frac{1}{p_s}}.    \label{aprioriest9}
\end{equation}
We also have
\begin{align}
\bigg(\int_{A_{R_1}^{R_1+2\epsilon}} &|\nabla\big((u-k_{n+1})^+\zeta_n)|^{p_s} \diff x\bigg)^{\frac{1}{p_s}} \leq \bigg(\int_{A_{R_1}^{R_1+2\epsilon}} L^{-s}(x) \diff x\bigg)^{\frac{1}{sp}} \times \notag\\ &\qquad \qquad \qquad \times\bigg(\int_{A_{R_1}^{R_1+2\epsilon}} L(x) |\nabla\big((u-k_{n+1})^+\zeta_n)|^p \diff x\bigg)^{\frac{1}{p}} \notag\\
&\leq 2\delta \bigg[\int_{A_{k_{n+1},\bar{\rho}_{n}}} L(x) |\nabla u|^p \diff x +  2^{(n+3)p} \epsilon^{-p} \int_{A_{k_{n+1},\bar{\rho}_{n}}} L(x)(u-k_{n+1})^p \diff x \bigg]^{\frac{1}{p}}    \notag\\
&\leq 2\delta  \bigg[ \int_{A_{k_{n+1},\bar{\rho}_{n}}} L(x) |\nabla u|^p \diff x + 2^{(n+3)p}\epsilon^{-p} \alpha \bigg(\int_{A_{k_{n+1},\bar{\rho}_{n}}}(u-k_{n+1})^q \diff x\bigg)^{\frac{p}{q} }\bigg] ^{\frac{1}{p}},    \label{aprioriest10}
\end{align}
where $\delta:=\bigg(\displaystyle\int_{A_{R_1}^{R_1+2\epsilon}} L^{-s}(x) \diff x\bigg)^{\frac{1}{sp}}$ and $\alpha$ is as in \eqref{4.notations}.
From \eqref{aprioriest13} and \eqref{aprioriest8}-\eqref{aprioriest10}, invoking \eqref{simple.inq}, we get 
\begin{align}
\bigg(\int_{A_{k_{n+1},\rho_{n+1}}} (u-k_{n+1})^{\bar{q}} \diff x\bigg)^{\frac{q}{\bar{q}}} \leq CC_{\epsilon}^q \bigg\{ &2^{\frac{n(q-p_s)}{p_s}} k_\ast^{-\frac{q-p_s}{p_s}}J_n^{\frac{1}{p_s}} +2^n\epsilon^{-1} \alpha^{\frac{1}{p}} \delta J_n^{\frac{1}{q}}\notag \\ &+\delta  \bigg( \int_{A_{k_{n+1},\bar{\rho}_{n}}} L(x) |\nabla u|^p \diff x \bigg) ^{\frac{1}{p}}\bigg\}^q . \label{aprioriest11}
\end{align}
Applying \eqref{Caccioppoli} with $r_1 =\bar{\rho}_n,\ r_2=\rho_n$ and $k=k_{n+1}$, we get
\begin{align*}
\int_{A_{k_{n+1},\bar{\rho}_{n}}} L(x)&|\nabla u|^p \diff x \leq C(\alpha+\beta\epsilon^p) \epsilon^{-p} 2^{np} \bigg(\int_{A_{k_{n+1},\rho_n}} (u-k_{n+1})^q \diff x\bigg)^{\frac{p}{q}}  + \\ &+ p\gamma \bigg( \int_{A_{k_{n+1},\rho_n}} (u-k_{n+1})^q\diff x \bigg)^{\frac{1}{q}} |A_{k_{n+1},\rho_n}|^{\frac{q-t}{qt}} +C\beta k_{\ast}^p |A_{k_{n+1},\rho_n}|^{\frac{p}{q}}.
\end{align*}
Using \eqref{aprioriest13}  and  \eqref{aprioriest14} again, we deduce from the last inequality that
$$\int_{A_{k_{n+1},\bar{\rho}_{n}}} L(x)|\nabla u|^p \diff x \leq C(\epsilon^{-p}\alpha+\beta)2^{np} J_n^{\frac{p}{q}}  + C\gamma 2^{\frac{n(q-t)}{t}} k_\ast^{-\frac{q-t}{t}}J_n^{\frac{1}{t}}.$$ 
Invoking \eqref{simple.inq} the last inequality yields
$$\left(\int_{A_{k_{n+1},\bar{\rho}_{n}}} L(x)|\nabla u|^p \diff x\right)^{\frac{1}{p}} \leq C(\epsilon^{-1}\alpha^{\frac{1}{p}}+\beta^{\frac{1}{p}})2^{n} J_n^{\frac{1}{q}}  + C\gamma^{\frac{1}{p}} 2^{\frac{n(q-t)}{tp}} k_\ast^{-\frac{q-t}{tp}}J_n^{\frac{1}{tp}}.$$
From this and \eqref{aprioriest11}, we obtain 
\begin{align}
\bigg(\int_{A_{k_{n+1},\rho_{n+1}}} (u-k_{n+1})^{\bar{q}} \diff x\bigg)^{\frac{q}{\bar{q}}} \leq CC_{\epsilon}^q \bigg\{ &2^{\frac{n(q-p_s)}{p_s}} k_\ast^{-\frac{q-p_s}{p_s}}J_n^{\frac{1}{p_s}}\notag +\delta(\epsilon^{-1}\alpha^{\frac{1}{p}}+\beta^{\frac{1}{p}})2^n J_n^{\frac{1}{q}}\\
&+ \gamma^{\frac{1}{p}} 2^{\frac{n(q-t)}{tp}} k_\ast^{-\frac{q-t}{tp}}J_n^{\frac{1}{tp}} \bigg\}^q \label{aprioriest12}.
\end{align}
It follows from \eqref{aprioriest12} and \eqref{simple.inq}, noticing $k^\ast>1$ and $J_n^{\frac{1}{p_s}}+J_n^{\frac{1}{q}}+J_n^{\frac{1}{tp}}\leq 2(J_n^{\frac{1}{p_s}}+J_n^{\frac{1}{tp}})$ due to $p_s<q<tp,$  that
\begin{align}
\bigg(&\int_{A_{k_{n+1},\rho_{n+1}}} (u-k_{n+1})^{\bar{q}} \diff x \bigg)^{\frac{q}{\bar{q}}} \leq \widetilde{C}(\epsilon,\alpha,\beta,\gamma,\delta) 2^{\frac{nq^2}{p_s}} \big(J_n^{\frac{q}{p_s}}+J_n^{\frac{q}{tp}}\big).     \label{aprioriest15}
\end{align}
From \eqref{aprioriest7}, \eqref{aprioriest14} and \eqref{aprioriest15}, we obtain 
\begin{align*}
J_{n+1} \leq C(\epsilon,\alpha,\beta,\gamma,\delta) 2^{\frac{nq^2}{p_s}} \bigg( J_n^{\frac{q}{p_s}} +J_n^{\frac{q}{tp}}\bigg) 2^{\frac{nq(\bar{q}-q)}{\bar{q}}}  k_{\ast}^{-\frac{q(\bar{q}-q)}{\bar{q}}} J_n^{\frac{\bar{q}-q}{\bar{q}}}.
\end{align*}
That is, 
\begin{align}\label{Recur.Inq (i)}
J_{n+1} \leq C(\epsilon,\alpha,\beta,\gamma,\delta)  k_{\ast} ^{-\frac{q(\bar{q}-q)}{\bar{q}}} \eta ^n \bigg(J_n^{1+\delta_1}+J_n^{1+\delta_2}\bigg),
\end{align}
where 
\begin{align*}
0<\delta_1:= \frac{q}{tp} -\frac{q}{\bar{q}}  < \delta_2 := \frac{q}{p_s} - \frac{q}{\bar{q}} \text{ and }
\eta:= 2^{\frac{q^2}{p_s}+\frac{q(\bar{q}-q)}{\bar{q}}} >1.
\end{align*}

\textbf{Step 3: A-priori bounds.} Invoking Lemma~\ref{leRecur}, we deduce from \eqref{Recur.Inq (i)} that  $J_n \to 0$ as $n \to \infty$, provided 
\begin{equation}
J_0 \leq \min \bigg((2\widetilde{k})^{-\frac{1}{\delta_1}} \eta^{-\frac{1}{\delta_1^2}}, (2\widetilde{k})^{-\frac{1}{\delta_2}} \eta^{-\frac{1}{\delta_1\delta_2} -\frac{\delta_2-\delta_1}{\delta_2^2}}\bigg),   \label{{aprioriest16}}
\end{equation}
where $\widetilde{k}:= C(\epsilon,\alpha,\beta,\gamma,\delta) k_{\ast}^{-\frac{q(\bar{q}-q)}{\bar{q}}}$.
We have 
\begin{align*}
J_0= \int_{A_{k_0,\rho_0}}(u-k_0)^q \diff x =  \int_{A^{\rho_0}_{R_1}}\big((u-k_0)^+\big)^q \diff x \leq  \int_{A_{R_1}^{R_1+2\epsilon}}(u^+)^q \diff x.
\end{align*}
On the other hand, the inequality
\begin{align*}
\int_{A_{R_1}^{R_1+2\epsilon}} (u^+)^q \diff x \leq \bigg( 2C(\epsilon,\alpha,\beta,\gamma,\delta) k_{\ast} ^{-\frac{q(\bar{q}-q)}{\bar{q}}}\bigg)^{-\frac{1}{\delta_1}} \eta^{-\frac{1}{\delta_1^2}}
\end{align*}
is equivalent to 
\begin{equation*}
k_{\ast} \geq \big(2C(\epsilon,\alpha,\beta,\gamma,\delta) \big)^{\frac{\bar{q}}{q(\bar{q}-q)}} \eta^{\frac{\bar{q}}{\delta_1q(\bar{q}-q)}} \bigg(\int_{A_{R_1}^{R_1+2\epsilon}}(u^+)^q \diff x \bigg)^{\frac{\bar{q}\delta_1}{q(\bar{q}-q)}}.
\end{equation*}
We also have that the following inequality
\begin{equation*}
\int_{A_{R_1}^{R_1+2\epsilon}} (u^+)^q \diff x \leq \bigg(2C(\epsilon,\alpha,\beta,\gamma,\delta)k_{\ast}^{-\frac{q(\bar{q}-q)}{\bar{q}}}\bigg)^{-\frac{1}{\delta_2}} \eta^{-\frac{1}{\delta_1\delta_2} -\frac{\delta_2-\delta_1}{\delta_2^2}}
\end{equation*}
is equivalent to 
\begin{equation*}
k_{\ast} \geq \big(2C(\epsilon,\alpha,\beta,\gamma,\delta)\big)^{\frac{\bar{q}}{q(\bar{q}-q)}} \eta^{\big(\frac{1}{\delta_1}+\frac{\delta_2-\delta_1}{\delta_2}\big) \frac{\bar{q}}{q(\bar{q}-q)}} \bigg(\int_{A_{R_1}^{R_1+2\epsilon}} (u^+)^q \diff x\bigg)^{\frac{\bar{q}\delta_2}{q(\bar{q}-q)}}.
\end{equation*}
So if we choose
\begin{equation}\label{aprioriestkstar}
k_{\ast}= \bigg[1+(2C(\epsilon,\alpha,\beta,\gamma,\delta))^{\frac{\bar{q}}{q(\bar{q}-q)}} \eta^{\big(\frac{1}{\delta_1} +\frac{\delta_2-\delta_1}{\delta_2}\big)\frac{\bar{q}}{q(\bar{q}-q)}} \bigg]\bigg[1+\bigg(\int_{A_{R_1}^{R_1+2\epsilon}} |u|^q \diff x \bigg)^{\frac{\bar{q}\delta_2}{q(\bar{q}-q)}}\bigg],
\end{equation}
then, we obtain \eqref{{aprioriest16}}, and hence, thanks to Lemma \ref{leRecur}
\begin{equation*}
J_n= \int_{A_{R_1}^{R_1+2\epsilon}} \big((u-k_{n})^+\big)^q \chi_{A_{R_1}^{\rho_{n}}} \diff x \to 0 \text{ as } n \to \infty.
\end{equation*}
Note that, due to Lebesgue's dominated convergence theorem we have 
\[J_n \to \int_{A_{R_1}^{R_1+2\epsilon}}\big((u-k_{\ast})^+\big)^q \chi_{A_{R_1}^{R_1+\epsilon}} \diff x = \int_{A_{R_1}^{R_1+\epsilon}} \big((u-k_{\ast})^+\big)^q \diff x  \text{ as } n \to \infty. \]
Thus, $\int_{A_{R_1}^{R_1+\epsilon}} \big((u-k_{\ast})^+\big)^q \diff x=0$ and hence, $(u-k_{\ast})^+ = 0$ a.e. in $A_{R_1}^{R_1+\epsilon}$, i.e., 
\begin{equation}\esssup_{A_{R_1}^{R_1+\epsilon}} u \leq k_{\ast}.   \label{aprioriestkstar1}
\end{equation} Replacing $u$ by $-u$ in Steps 1 and 2 and arguing as above, we get 
\begin{equation} \label{aprioriestkstar2}
\esssup_{A_{R_1}^{R_1+\epsilon}} (-u) \leq k_{\ast}.
\end{equation}
It follows from \eqref{aprioriestkstar1} and \eqref{aprioriestkstar2} that 
\begin{equation}
\|u\|_{L^{\infty}(A_{R_1}^{R_1+\epsilon})} \leq k_{\ast}.   \label{aprioriestkstar3}
\end{equation}
Note that by Lemma \ref{well-defined}, we have 
\begin{equation}
u \in L^q\big(A_{R_1}^{R_1+2\epsilon}\big).  \label{aprioriestkstar4}
\end{equation}
Combining \eqref{aprioriestkstar}, \eqref{aprioriestkstar3} and \eqref{aprioriestkstar4} there exist $C>0$ and $\mu >0$, both independent of $u$, such that \eqref{mainestI} holds. This completes the proof of part (i).

(ii) We proceed in the same fashion as in part (i) of this proof. Let $u$ be a weak solution of problem \eqref{4defweaksol}. Without loss of generality we may assume that $t>\frac{q}{p}.$ Denote  $$ A:= \|L\|_{L^{\frac{q}{q-p}}\big(B(x_0,r_0)\big)}, \ B:= \|a\|_{L^{\frac{q}{q-p}}\big(B(x_0,r_0)\big)}  \text{ and } M:= \|b\|_{L^{\frac{t}{t-1}}\big(B(x_0,r_0)\big)},$$
and for $k>0,$ $\delta \in (0,r_0),$ denote
$$A_{k,\delta} := \{x \in B(x_0,\delta): u(x)>k\}.$$
We will prove that there exists a positive constant $C$ independent of $u$ such that for any $0<r_1<r_2<r_0$ and $k>0,$ the following Caccioppoli-type inequality holds true:
\begin{align}\label{Caccioppoli.Inq.2}
\int_{A_{k,r_1}}L(x) |\nabla u|^p \diff x &\leq C(A +Br_0^p) \bigg(\int_{A_{k,r_2}} \bigg(\frac{u-k}{r_2-r_1} \bigg)^q\diff x \bigg)^{\frac{p}{q}} + \notag \\  &\quad+pM\bigg( \int_{A_{k,r_2}} (u-k)^q\diff x \bigg)^{\frac{1}{q}}   |A_{k,r_2}|^{\frac{q-t}{qt}} +CB k^p |A_{k,r_2}|^{\frac{p}{q}}.    
\end{align}
Indeed, let $\xi \in C^{\infty}(\mathbb{R}^N),$ such that $\chi_{B(x_0,r_1)} \leq \xi \leq \chi_{B(x_0,r_2)}$ and $|\nabla \xi | \leq \frac{2}{r_2-r_1}.$ We note that for $\widetilde{u}\in \mathcal{D}_0^{1,p}(A_{R_1}^{R_2};L)$ and $\widetilde{\xi}\in C^1(\mathbb{R}^N)$ with $\chi_{B(x_0,r_1)} \leq \widetilde{\xi} \leq \chi_{{B(x_0,r_2)}},$ we have $\widetilde{u}\widetilde{\xi} \in \mathcal{D}_0^{1,p}(A_{R_1}^{R_2};L)$ and $\widetilde{u}\widetilde{\xi}$ is a test function for \eqref{P}. By this and Proposition~\ref{prop.(u-k)^+}, we can use $(u-k)^+ \xi^p$ as a test function in \eqref{P} and then repeating the arguments used in the proof of part (i), we easily obtain \eqref{Caccioppoli.Inq.2}.

Next, we define the recursive sequence $\{J_n\}$ as follows. For each $n \in \mathbb{N}_0$, define
 \[ J_n := \displaystyle\int_{A_{k_n,\rho_n}}(u-k_n)^q \diff x,\]
where
\[\rho_n:=\frac{r_0}{2} + \frac{r_0}{2^{n+1}}\  \ \text{and}\  \ k_n := k_\ast\left(1-\frac{1}{2^{n+1}}\right)\]
with $k_\ast>0$ to be specified later.
Note that 
\[ \rho_n  \downarrow \frac{r_0}{2},\ k_n \uparrow k_\ast \text{ and } \frac{r_0}{2} < \rho_n \leq r_0, \ \frac{k_\ast}{2} \leq k_n <k_\ast,\  \forall n\in \mathbb{N}_0.  \]
Denote $\bar{\rho}_n:= \frac{\rho_n +\rho_{n+1}}{2}$ ($n\in\mathbb{N}_0$) and fix $\zeta \in C^1(\mathbb{R})$, such that $\chi_{(-\infty, \frac{1}{2})} \leq \zeta \leq \chi_{(-\infty, \frac{3}{4})}$ and $|\zeta'| \leq 8$. Define 
\[\zeta_n(x):=\zeta\left(\frac{2^{n+1}}{r_0}\biggl(|x-x_0|-\frac{r_0}{2}\biggr)\right), \ \ x \in \mathbb{R}^N. \]
Then $\zeta_n\in C^1(\mathbb{R}^N)$, $\chi_{B(x_0, \rho_{n+1})}\leq\zeta_n \leq \chi_{B(x_0,\bar{\rho}_n)}$ and $ |\nabla \zeta_n| \leq \frac{2^{n+4}}{r_0}$ for all $n \in \mathbb{N}_0$.\\
Fix $\bar{q} \in (tp,p_s^{\ast})$. Using H\"oder's inequality, we have
\begin{align}
J_{n+1} = \int_{A_{k_{n+1},\rho_{n+1}}}(u-k_{n+1})^q \diff x \leq \left(\int_{A_{k_{n+1},\rho_{n+1}}}(u-k_{n+1})^{\bar{q}} \diff x\right)^{\frac{q}{\bar{q}}} \left|A_{k_{n+1},\rho_{n+1}}\right|^{\frac{\bar{q}-q}{\bar{q}}}.   \label{5thlocest}
\end{align}
It is easy to see that
\begin{align}
\int_{A_{k_{n+1},\rho_{n+1}}}(&u-k_{n+1})^{\bar{q}} \diff x \leq \int_{B(x_0,r_0)}\big((u-k_{n+1})^+\zeta_n\big)^{\bar{q}} \diff x.     \label{6thlocest}
\end{align}
By the assumption on $L$, $W^{1,p}(B(x_0,r_0); L):=\{u\in W^1(B(x_0,r_0)):\int_{B(x_0,r_0)}\big[|u|^p+L(x)|\nabla u|^p\big]\diff x<\infty\}$ is a Sobolev space with respect to the norm
$$\|u\|_{W^{1,p}(B(x_0,r_0); L)}:=\left(\int_{B(x_0,r_0)}\big[|u|^p+L(x)|\nabla u|^p\big]\diff x\right)^{\frac{1}{p}}.$$
Moreover, $W^{1,p}(B(x_0,r_0); L)\hookrightarrow  W^{1,p_s}(B(x_0,r_0))\hookrightarrow L^{\bar{q}}(B(x_0,r_0))$ in view of \cite[Theorem 1.3 and the embedding (1.22)]{Dra-Kuf-Nic}. Denote by $W^{1,p}_0(B(x_0,r_0); L)$ the closure of $C_c^\infty(B(x_0,r_0))$ in $W^{1,p}(B(x_0,r_0); L)$ with respect to the norm $\|\cdot\|_{W^{1,p}(B(x_0,r_0); L)}.$ For any $\widetilde{u} \in C_c^\infty(B(x_0,r_0))$ using the change of variable of the form $x=x_0+y,\ \ \widetilde{v}(y) = \widetilde{u}(x_0+y)$, and employing Sobolev's embedding and Poincar\'e's inequality we obtain
\begin{align*}
\bigg(\int_{B(x_0,r_0)}&|\widetilde{u}(x)|^{\bar{q}}\diff x\bigg)^{\displaystyle\frac{1}{\bar{q}}} = \left(\int_{B(0,r_0)}|\widetilde{v}(y)|^{\bar{q}}\diff y\right)^{\displaystyle\frac{1}{\bar{q}}}\\ 
&\leq C_1(r_0)\left( \int_{B(0,r_0)}\bigl(|\widetilde{v}(y)|^{p_s} +  |\nabla \widetilde{v}(y)|^{p_s} \bigr)\diff y \right)^{\displaystyle\frac{1}{p_s}}\\
&\leq C_2(r_0) \left(\int_{B(0,r_0)} |\nabla \widetilde{v}(y)|^{p_s} \diff y\right)^{\displaystyle\frac{1}{p_s}}=  C_2(r_0) \left(\int_{B(x_0,r_0)} |\nabla \widetilde{u}(x)|^{p_s} \diff x\right)^{\displaystyle\frac{1}{p_s}}\\ 
&\leq C_2(r_0) \left(\int_{B(x_0,r_0)} L^{-s}(x) \diff x \right)^{\displaystyle\frac{1}{sp}} \left(\int_{B(x_0,r_0)}L(x)|\nabla \widetilde{u}(x)|^p \diff x \right)^{{\displaystyle\frac{1}{p}}}.
\end{align*} 
Here, and in what follows, $C_i(r_0)$ ($i\in\mathbb{N}$) depend only on $r_0.$ Thus we obtain
$$\int_{B(x_0,r_0)}|\widetilde{u}(x)|^{\bar{q}}\diff x\leq C_3(r_0)D\left(\int_{B(x_0,r_0)}L(x)|\nabla \widetilde{u}(x)|^p \diff x, \right)^{{\displaystyle\frac{\bar{q}}{p}}}$$
where $D:= \biggl(\int_{B(x_0,r_0)}L^{-s}(x) \diff x\biggr)^{ \frac{\bar{q}}{sp}}$, for all $\widetilde{u} \in C_c^\infty(B(x_0,r_0)).$ By the density argument, it holds for all $\widetilde{u} \in W^{1,p}_0(B(x_0,r_0); L).$ It is easy to see that $(u-k_{n+1})^+\zeta_n\in W^{1,p}_0(B(x_0,r_0), L).$ Thus, applying the last inequality for $\widetilde{u}=(u-k_{n+1})^+\zeta_n$ and combining this with \eqref{6thlocest} we obtain
\begin{align*}
\int_{A_{k_{n+1},\rho_{n+1}}}(u-k_{n+1})^{\bar{q}} \diff x  &\leq C_3(r_0) D \Biggl(\int_{B(x_0,r_0)}L(x)|\nabla( (u-k_{n+1})^+\zeta_n)|^p \diff x \Biggr)^{{\frac{\bar{q}}{p}}}\\
&\leq C_4(r_0) D \biggl[\int_{B(x_0,r_0)}L(x)|\nabla(u-k_{n+1})^+|^p\zeta_n^p \diff x \\  &\hspace*{3cm} + \int_{B(x_0,r_0)}L(x)((u-k_{n+1})^+)^p|\nabla\zeta_n|^p \diff x \biggr]^{{\frac{\bar{q}}{p}}}\\
&\leq C_4(r_0) D \biggl[ \int_{A_{k_{n+1},\bar{\rho}_n}} L(x)|\nabla u|^p \diff x  \\&\hspace*{3cm}+ 2^{(n+4)p} r_0^{-p} \int_{A_{k_{n+1},\bar{\rho}_n}} L(x)(u-k_{n+1})^p \diff x\biggr]^{{\frac{\bar{q}}{p}}}\\
&\leq C_5(r_0)D \biggl[ \int_{A_{k_{n+1},\bar{\rho}_n}} L(x)|\nabla u|^p \diff x \\ &\hspace*{3cm}+ 2^{np} A \left(\int_{A_{k_{n+1},\bar{\rho}_n}}(u-k_{n+1})^q \diff x \right)^{\frac{p}{q}} \biggr]^{{\frac{\bar{q}}{p}}}.
\end{align*}
This yields 
\begin{equation}\label{7thlocest}
\int_{A_{k_{n+1},\rho_{n+1}}}(u-k_{n+1})^{\bar{q}} \diff x \leq C_5(r_0) D \Biggl[ \int_{A_{k_{n+1},\bar{\rho}_n}} L(x)|\nabla u|^p \diff x +2^{np} A J_n^{\frac{p}{q}} \Biggr]^{\frac{\bar{q}}{p}}.     
\end{equation}
 Applying \eqref{Caccioppoli.Inq.2} for $k=k_{n+1},\ r_1=\bar{\rho}_n$ and $r_2 = \rho_n$, we get
\begin{align*}
\int_{A_{k_{n+1},\bar{\rho}_{n}}} L(x)&|\nabla u|^p \diff x \leq C2^{(n+3)p} r_0^{-p}(A+Br_0^p) \bigg(\int_{A_{k_{n+1},\rho_n}} (u-k_{n+1})^q\bigg)^{\frac{p}{q}}  + \\ &+ pM \bigg( \int_{A_{k_{n+1},\rho_n}} (u-k_{n+1})^q\diff x \bigg)^{\frac{1}{q}} |A_{k_{n+1},\rho_n}|^{\frac{q-t}{qt}} +CB k_{\ast}^p |A_{k_{n+1},\rho_n}|^{\frac{p}{q}}.
\end{align*} 
Combining this and \eqref{7thlocest}, we obtain
\begin{align*}
\int_{A_{k_{n+1},\rho_{n+1}}} (u-k_{n+1})^{\bar{q}} \diff x \leq C_6(r_0) D \biggl[ (A+B)&2^{np}J_n^{\frac{p}{q}} +M J_n^{\frac{1}{q}} |A_{k_{n+1},\rho_n}|^{\frac{q-t}{qt}}\\
&+Bk_\ast^p \bigl|A_{k_{n+1},\rho_n}\bigr|^{\frac{p}{q}}+2^{np}AJ_n^{\frac{p}{q}}\biggr]^{\frac{\bar{q}}{p}}.
\end{align*}
From this and the estimate 
\begin{equation*}
\bigl|A_{k_{n+1},\rho_{n+1}}\bigr|\leq\bigl|A_{k_{n+1},\rho_n}\bigr| \leq \int_{A_{k_{n+1},\rho_n}} \biggl(\frac{u-k_n}{k_{n+1}-k_n}\biggr)^{q} \diff x  
\leq    \frac{2^{(n+2)q}}{k_\ast^q} J_n,
\end{equation*}
we obtain from \eqref{5thlocest} that
\begin{equation}\label{est-common2}
J_{n+1} \leq C_7(r_0) D^{\frac{q}{\bar{q}}} \biggl[(A+B) 2^{np}J_n^{\frac{p}{q}} + Mk_\ast^{1-\frac{q}{t}}2^{\frac{n(q-t)}{t}}J_n^{\frac{1}{q}+\frac{q-t}{qt}}\biggr]^{\frac{q}{p}} \frac{2^{\frac{nq(\bar{q}-q)}{\bar{q}}} J_n^{\frac{\bar{q}-q}{\bar{q}}} }{k_\ast^{\frac{q(\bar{q}-q)}{\bar{q}}}}.
\end{equation}
So if we choose $k_\ast>1$ then \eqref{est-common2} implies
\begin{align*}
J_{n+1} \leq C(A,B,M,D,r_0)  k_{\ast} ^{-\frac{q(\bar{q}-q)}{\bar{q}}} \eta ^n \bigg(J_n^{1+\delta_1}+J_n^{1+\delta_2}\bigg),
\end{align*}
where 
\begin{align*}
0<\delta_1:= \frac{q}{tp} -\frac{q}{\bar{q}}  < \delta_2 := \frac{\bar{q}-q}{\bar{q}}  \text{ and }
\eta:= 2^{q+\frac{q(\bar{q}-q)}{\bar{q}}} >1.
\end{align*}
Finally, arguing as in Step 3 of the proof of part (i) we get the desired conclusion.
\end{proof}

Obviously, Theorem \ref{Theorem.local.behaviorI} is a special case of Theorem~\ref{apriorimainthm}. Now we give the proof of Theorem \ref{Theorem.local.behaviorII}. Since this proof is similar to that of Theorem~\ref{apriorimainthm} (ii), we only sketch it.
\begin{proof}[Proof of Theorem \ref{Theorem.local.behaviorII}] Let $u$ be a solution of problem \eqref{1.1} and let $\mu\in (0,1-\frac{q}{p_s^\ast})$. We follow the argument in the proof of Theorem~\ref{apriorimainthm} (ii) with the choice $a=K$, $b=0$ and $\bar{q}:=\frac{q}{1-\mu}$ to obtain \eqref{est-common2} of the form
	\begin{equation*}
		J_{n+1} \leq C(r_0) D^{\frac{q}{\bar{q}}} \biggl[(A+B)2^{np}J_n^{\frac{p}{q}} \biggr]^{\frac{q}{p}} \frac{2^{\frac{nq(\bar{q}-q)}{\bar{q}}} J_n^{\frac{\bar{q}-q}{\bar{q}}} }{k_\ast^{\frac{q(\bar{q}-q)}{\bar{q}}}},
	\end{equation*}
where

$ A:= \|L\|_{L^{\frac{q}{q-p}}\big(B(x_0,r_0)\big)}, \ B:= \|K\|_{L^{\frac{q}{q-p}}\big(B(x_0,r_0)\big)}, D:= \biggl(\int_{B(x_0,r_0)}L^{-s}(x) \diff x\biggr)^{\frac{\bar{q}}{sp}},$
and $C(r_0)>0$  depends only on $r_0.$ This implies
\begin{equation}\label{Recur.Inq*}
J_{n+1} \leq C(r_0) D^{\frac{q}{\bar{q}}}(A+B)^{\frac{q}{p}} k_\ast^{-q\mu}\eta^n J_n^{1+\mu},
\end{equation}
where 
 $\eta:=2^{q(1+\mu)}>1.$ 
Invoking Lemma~\ref{leRecur} with $\delta_1=\delta_2=\mu$, we deduce from \eqref{Recur.Inq*} that  $J_n \to 0$ as $n \to \infty$, provided 
\begin{equation}
J_0 \leq \left[C(r_0) D^{\frac{q}{\bar{q}}}(A+B)^{\frac{q}{p}} k_\ast^{-q\mu}\right]^{-\frac{1}{\mu}} \eta^{-\frac{1}{\mu^2}}.   \label{{aprioriest16*}}
\end{equation}
We have 
\begin{align*}
J_0= \int_{A_{k_0,\rho_0}}(u-k_0)^q \diff x =  \int_{B(x_0,\rho_0)}\big((u-k_0)^+\big)^q \diff x \leq  \int_{B(x_0,r_0)}(u^+)^q \diff x.
\end{align*}
So if we choose
\begin{equation}\label{aprioriestkstar*}
k_{\ast}= \big[C(r_0)\eta^{\frac{1}{\mu}}\big]^{\frac{1}{q\mu}} D^{\frac{1}{\bar{q}\mu}} (A+B)^{\frac{1}{p\mu}}\bigg(\int_{B(x_0,r_0)} (u^+)^q\diff x \bigg)^{\frac{1}{q}},
\end{equation}
then, we obtain \eqref{{aprioriest16*}}, and hence, thanks to Lemma \ref{leRecur}
\begin{equation*}
J_n= \int_{B(x_0,r_0)} \big((u-k_{n})^+\big)^q \chi_{B(x_0,\rho_{n})} \diff x \to 0 \text{ as } n \to \infty.
\end{equation*}
Note that, due to Lebesgue's dominated convergence theorem we have 
\[J_n \to \int_{B(x_0,r_0)}\big((u-k_{\ast})^+\big)^q \chi_{B(x_0,\frac{r_0}{2})} \diff x = \int_{B(x_0,\frac{r_0}{2})} \big((u-k_{\ast})^+\big)^q \diff x  \text{ as } n \to \infty. \]
Thus, $\int_{B(x_0,\frac{r_0}{2})} \big((u-k_{\ast})^+\big)^q \diff x=0$ and hence, $(u-k_{\ast})^+ = 0$ a.e. in $B(x_0,\frac{r_0}{2})$, i.e., 
\begin{equation}\esssup_{B(x_0,\frac{r_0}{2})} u \leq k_{\ast}.   \label{aprioriestkstar1*}
\end{equation} Replacing $u$ by $-u$ in the arguments above, we get 
\begin{equation} \label{aprioriestkstar2*}
\esssup_{B(x_0,\frac{r_0}{2})} (-u) \leq k_{\ast}.
\end{equation}
It follows from \eqref{aprioriestkstar1*} and \eqref{aprioriestkstar2*} that 
\begin{equation}
\|u\|_{L^{\infty}(B(x_0,\frac{r_0}{2}))} \leq k_{\ast}.   \label{aprioriestkstar3*}
\end{equation}
Note that by Lemma \ref{well-defined}, we have 
\begin{equation}
u \in L^q\big(B(x_0,r_0)\big).  \label{aprioriestkstar4*}
\end{equation}
Combining \eqref{aprioriestkstar*}, \eqref{aprioriestkstar3*} and \eqref{aprioriestkstar4*} there exists $C=C(\mu,r_0)>0$ independent of $u$, such that \eqref{localmainestI} holds. The proof is complete.
\end{proof}


\subsection{The smoothness of solutions}

In this subsection we prove the results on smoothness of solutions.
\begin{proof}[Proof of Theorem~\ref{Theorem.C1-regularity1}]	
We rewrite \eqref{1.1} as
$$-L\operatorname{div}(|\nabla u|^{p-2}\nabla u)-	|\nabla u|^{p-2}(\nabla u\cdot \nabla L)=\lambda K|u|^{p-2}u,$$
i.e.,
$$-\operatorname{div}(|\nabla u|^{p-2}\nabla u)=\lambda\frac{K}{L}|u|^{p-2}u+	|\nabla u|^{p-2}(\nabla u\cdot \frac{\nabla L}{L}).$$
Thus, $\phi=u$ is a weak solution to
$$-\operatorname{div}\vec{a}(x,\phi,\nabla \phi)+b(x,\phi,\nabla \phi)=0,$$
where $\vec{a}(x,\phi,\nabla \phi)=-|\nabla \phi|^{p-2}\nabla \phi$ and $b(x,\phi,\nabla \phi)=\lambda\frac{K}{L}|u|^{p-2}u+	|\nabla \phi|^{p-2}(\nabla \phi\cdot \frac{\nabla L}{L}).$
In view of Corollary~\ref{2.corollary.local_embeddings} and Theorem~\ref{Theorem.local.behaviorII} we have $u\in W_{\loc}^{1,p}(A_{R_1}^{R_2})\cap L_{\loc}^\infty(A_{R_1}^{R_2}).$ Using Young's inequality, for any $R_1<r_1<r_2<R_2$ we have
$$|b(x,\phi,\nabla \phi)|\leq \lambda\|u\|_{L^\infty(A_{r_1}^{r_2})}^{p-1}\left|\frac{K}{L}\right|+\frac{p-1}{p}|\nabla \phi|^p+\frac{1}{p}\left|\frac{\nabla L}{L}\right|^p.$$
Hence
$$|b(x,\phi,\nabla \phi)|\leq \frac{p-1}{p}|\nabla \phi|^p+\left(\lambda\|u\|_{L^\infty(A_{r_1}^{r_2})}^{p-1}+\frac{1}{p}\right)\left(\left|\frac{K}{L}\right|+\left|\frac{\nabla L}{L}\right|^p\right).$$
Thus by \cite[Theorem 2 and its Remark]{DiBenedetto} we obtain $C_{\loc}^{1,\alpha}(A_{r_1}^{r_2})$ for any $R_2<r_1<r_2<R_2$ and hence the proof is completed. 
\end{proof}
Finally we conclude this subsection by proving H\"older regularity of eigenfunctions up to inner boundary. 
\begin{proof}[Proof of Theorem~\ref{Theorem.C1-regularity2}]
By Theorem~\ref{Theorem.local.behaviorI}, we have $u \in L^{\infty}\big(A_{R_1}^{R_1+\epsilon}\big)$. From this and the estimate 
\[\int_{A_{R_1}^{R_1+\epsilon}} |\nabla u|^p \diff x \leq \frac{1}{\underset{x\in A_{R_1}^{R_1+\epsilon}}{\essinf}\ L(x)} \int_{A_{R_1}^{R_1+\epsilon}} L(x) |\nabla u|^p \diff x < \infty,\]
we obtain $u \in W^{1,p}(A_{R_1}^{R_1+\epsilon}) \cap L^{\infty}\big(A_{R_1}^{R_1+\epsilon}\big)$. As in the proof of Theorem~\ref{Theorem.C1-regularity1}, we have 
$$-\operatorname{div}(|\nabla u|^{p-2}\nabla u)=\lambda\frac{K}{L}|u|^{p-2}u+	|\nabla u |^{p-2}(\nabla u\cdot \frac{\nabla L}{L}).$$ 
Thus $\phi=u\in W^{1,p}(A_{R_1}^{R_1+\epsilon}) \cap L^{\infty}\big(A_{R_1}^{R_1+\epsilon}\big)$ is a weak solution to the following problem
\begin{align*}
\begin{cases}
-\operatorname{div}(|\nabla \phi|^{p-2}\nabla \phi)=\lambda\frac{K}{L}|\phi|^{p-2} \phi+	|\nabla \phi|^{p-2}(\nabla \phi\cdot \frac{\nabla L}{L})\ \text{ in }\ A_{R_1}^{R_1+\epsilon},\\
\phi=0\  \text{ on }\ \partial B_{R_1}\ \text{ and }\ \phi = u \text{ on } \partial B_{R_1+\epsilon}.
\end{cases}
\end{align*}
By Theorem~\ref{Theorem.C1-regularity1}, we have $u \in C^{1,\alpha}(\partial B_{R_1+\epsilon})$. From this and $|\frac{\nabla L}{L}|+|\frac{K}{L}| \in L^{\infty}\big(A_{R_1}^{R_1+\epsilon}\big)$, we have $\phi =u \in C^{1,\beta_{\epsilon}}(\overline{A_{R_1}^{R_1+\epsilon}})$ for some $\beta_{\epsilon} \in (0,1)$ in view of \cite[Theorem 1]{Lieberman}.
\end{proof}
\subsection{Positivity and decay of solutions}
In this subsection we prove the positivity and decay of solutions. First, we prove Theorem~\ref{Theorem.everywhere-positivity}, which states that a nonnegative $C^1$ solution is positive everywhere.
\begin{proof}[Proof of Theorem~\ref{Theorem.everywhere-positivity}] By Theorem~\ref{Theorem.C1-regularity1}, we have $u\in C^1(A_{R_1}^{R_2}).$ The conclusion of the theorem then follows from \cite[Theorem 8.1]{Pucci-Serrin}. 
\end{proof}
Finally, we show the decay of solutions at infinity when the domain is unbounded.

	\begin{proof}[Proof of Corollary~\ref{Coro.decay.at.infinity.2}]
Denote $$\alpha_1:=\essinf_{x \in B_R^c} L(x) \text{ and } \beta_1:= \esssup_{x \in B_{R+r_0}^c} \bigg[\|L\|_{L^{\frac{q}{q-p}}(B(x,r_0))}+\|K\|_{L^{\frac{q}{q-p}}(B(x,r_0))}\bigg] ,$$
	then $0<\alpha_1,\beta_1<\infty$ by the assumptions of the corollary. Let $u$ be a solution to problem \eqref{1.1}.	We first  show that $u \in L^{p^\ast}(B_{R+\epsilon}^c)$ for all $\epsilon >0$. Indeed, fix an $\epsilon>0$ and let $\{u_n\} \subset C_c^1(B_{R_1}^c)$ such that 
		\begin{equation}\label{cordecay10}
		\int_{B_{R_1}^c} L(x) |\nabla u_n -\nabla u|^p \diff x \to 0 \text{ as } n \to \infty.
		\end{equation}  
		Since $\mathcal{D}^{1,p}_0(B_{R_1}^c;L) \hookrightarrow  L^p_{\loc}(B_{R_1}^c)$, up to a subsequence, we have 
		\begin{align}\label{cordecay11}
		\begin{cases}
		u_n \to u \text{ a.e. in } B_{R_1}^c,\\
		u_n \to u \text{ in } L^p(A_R^{R+\epsilon}).
		\end{cases}
		\end{align}
		Let $\phi \in C^{\infty}(\mathbb{R}^N)$ such that $\chi_{B^c_{R+\epsilon}} \leq \phi \leq \chi_{B^c_R}$ and $|\nabla \phi | \leq \frac{2}{\epsilon}$. Since $\phi u_n \in C_c^1(\mathbb{R}^N)$, by Sobolev's embedding we have 
		\[\bigg(\int_{\mathbb{R}^N} |\phi u_n|^{p^{\ast}} \diff x \bigg)^{\frac{p}{p^{\ast}}} \leq C \int_{\mathbb{R}^N} \big|\nabla(\phi u_n)\big|^p \diff x,\quad \forall n\in\mathbb{N},\]
		where $C>0$ is independent of $n.$ Hence 
		\begin{align*}
		\bigg(\int_{B^c_{R+\epsilon}} |u_n|^{p^{\ast}} \diff x \bigg)^{\frac{p}{p^{\ast}}} &\leq 2^{p-1}C \bigg(\int_{\mathbb{R}^N} \phi^p |\nabla u_n|^p \diff x+ \int_{\mathbb{R}^N} |u_n|^p|\nabla \phi|^p \diff x \bigg)\\
		&\leq 2^{p-1}C \bigg[\frac{1}{\alpha_1}\int_{B_R^c}L(x)|\nabla u_n|^p \diff x + \bigg(\frac{2}{\epsilon}\bigg)^p \int_{A_{R}^{R+\epsilon}} |u_n|^p \diff x \bigg].
		\end{align*}
		Letting $n\to\infty$ in the last estimate, using \eqref{cordecay10}, \eqref{cordecay11} and Fatou's lemma, we get
		\[\bigg(\int_{B_{R+\epsilon}^c}|u|^{p^{\ast}} \diff x\bigg)^{\frac{p}{p^{\ast}}} \leq \frac{2^{p-1}C}{\alpha_1} \int_{B_R^c} L(x)|\nabla u|^p \diff x + \frac{2^{2p-1}C}{\epsilon^p} \int_{A_R^{R+\epsilon}} |u|^p \diff x < \infty.\]
		Thus $u \in L^{p^{\ast}}(B_{R+\epsilon}^c).$ Hence for a fixed $x \in B_{R+r_0+\epsilon}$, we get 
		\begin{equation}\label{est.q-p*}
	\int_{B(x,r_0)} |u|^q \diff y \leq |B(x,r_0)|^{\frac{p^{\ast}-q}{p^{\ast}}} \bigg(\int_{B(x,r_0)} |u|^{p^{\ast}} \diff y\bigg)^{\frac{q}{p^{\ast}}}.
		\end{equation}
		Let $s>\frac{N}{p}+\frac{1}{p-1}$ be sufficiently large such that $q<p_s^\ast<p^\ast.$ Fix such $s$ and $\mu\in (0,1-\frac{q}{p_s^\ast})$. Clearly, all the assumptions of Theorem~\ref{Theorem.local.behaviorII} are satisfied so we obtain \eqref{localmainestI} for any ball $B(x,r_0)$. From this estimate and \eqref{est.q-p*}, for all $|x| > R+r_0+\epsilon$, we have
		\[ \|u\|_{L^{\infty}\big( B\big(x,\frac{r_0}{2}\big)\big)} \leq C(r_0,\mu)(\alpha_1^{-s}|B(x,r_0)|)^{\frac{1}{sp\mu}} \beta_1^{\frac{1}{\mu p}} |B(0,r_0)|^{\frac{p^{\ast}-q}{qp^{\ast}}} \bigg(\int_{B(x,r_0)} |u|^{p^{\ast}} \diff y\bigg)^{\frac{1}{p^{\ast}}}. \]
		That is, 
		\[ \|u\|_{L^{\infty}\big( B\big(x,\frac{r_0}{2}\big)\big)} \leq C(r_0,\mu,\alpha_1,\beta_1) \bigg(\int_{B(x,r_0)} |u|^{p^{\ast}} \diff y\bigg)^{\frac{1}{p^{\ast}}}. \]
		where $C(r_0,\mu,\alpha_1,\beta_1)$ is independent of $x.$ Since $u \in L^{p^{\ast}}(B_{R+\epsilon}^c)$, we deduce from the last inequality that $u(x) \to 0$ uniformly as $|x| \to \infty$.
	\end{proof}

\section{The asymptotic estimates of solutions towards the boundary}\label{Sec.Behavior}
	
In this section we prove the asymptotic estimates of solutions towards the boundary stated in Theorems~\ref{radialsoldecayleft} and \ref{radialsoldecayright}. Such asymptotic estimates are obtained due to strengthened versions of $\mathrm{(A)}$ near $R_1$ and $R_2.$ 
\begin{remark}\label{Rmk.non-oscillation}\rm
Note that in the condition $\mathrm{(A)}$, when $v,w\in L^1_{\loc}(R_1,R_2)$ and $P(r)<\infty$ for all $r\in(R_1,R_2),$ then $\int_{R_1}^{R_2}P(r)\sigma(r)\diff r<\infty$ is equivalent to $\int_{R_1}^{r_1}P(r)\sigma(r)\diff r<\infty$ and $\int_{r_2}^{R_2}P(r)\sigma(r)\diff r<\infty$ for some $R_1<r_1<r_2<R_2.$	Note that $\mathrm{(A_{\epsilon,L})}$ implies that $\int_{R_1}^{r_1}P(r)\sigma(r)\diff r<\infty$ for some $r_1\in (R_1,\xi).$ Indeed, since $\rho^{1-p'}\in L^1(R_1;\xi)$, we have $\int_{R_1}^{r}\rho^{1-p'}(\tau)\diff \tau\to 0$ as $r\to R_1^+.$ Thus, there exists $r_1\in (R_1,\xi)$ such that $P(r)=\left(\int_{R_1}^{r}\rho^{1-p'}(\tau)\diff \tau\right)^{p-1}$ for all $r\in (R_1,r_1).$ Hence, by $\mathrm{(A_{\epsilon,L})}$ we have
	$$P(r)<C^{\frac{p-1}{\epsilon}}\left(\int_{r}^{\xi}\sigma(\tau)\diff \tau \right)^{-\frac{p-1}{\epsilon}},\quad \forall r\in (R_1,r_1).$$
	Therefore
	\begin{align*}
	\int_{R_1}^{r_1}P(r)\sigma(r)\diff r&<C^{\frac{p-1}{\epsilon}}\int_{R_1}^{r_1}\left(\int_{r}^{\xi}\sigma(\tau)\diff \tau \right)^{-\frac{p-1}{\epsilon}}\sigma(r)\diff r\\
	&<\frac{C^{\frac{p-1}{\epsilon}}\epsilon}{p-1-\epsilon}\left(\int_{r_1}^{\xi}\sigma(\tau)\diff \tau \right)^{-\frac{p-1-\epsilon}{\epsilon}}<\infty.
	\end{align*}
Similarly, 	it is easy to see that $\mathrm{(A_{\epsilon,R})}$ implies that $\int_{r_2}^{R_2}P(r)\sigma(r)\diff r<\infty$ for some $r_2\in (\xi,R_2).$ 
\end{remark}


 We start the proof of Theorem~\ref{radialsoldecayleft} by stating nonoscillatory property of the radial solution in the right neighborhood of $R_1.$ This fact can be obtained by applying \cite[Theorem 1.14]{Opic-Kufner} and using a similar argument to that of \cite[Proof of Proposition 4.3]{Drabek-Kuliev}. Therefore, we omit it.
\begin{lemma}[Nonoscillatory I]\label{non-oscillation.L}
	Assume that $\mathrm{(A_{\epsilon,L})}$ holds. Then for a solution $u\in C^1(R_1,R_2)$ of \eqref{ode} with $u(R_1)=0,$ there exists $a\in (R_1,\xi)$ such that $u(r)\ne 0$ and $u'(r)\ne 0$ for all $r\in (R_1,a).$
\end{lemma}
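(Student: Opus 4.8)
The plan is to argue by contradiction, using nothing beyond the ODE structure of \eqref{ode} — crucially, that the flux $\psi:=\rho\,|u'|^{p-2}u'$ is strictly monotone on any interval where $u$ keeps a fixed sign — together with the strengthened condition $\mathrm{(A_{\epsilon,L})}$, which substitutes for a one-sided weighted Hardy estimate (at the level of the ODE this replaces the integral inequality of \cite[Theorem 1.14]{Opic-Kufner} used in \cite[Proof of Proposition 4.3]{Drabek-Kuliev}). I would set $G(r):=\int_{R_1}^{r}\rho^{1-p'}(\tau)\diff\tau$; since $\mathrm{(A_{\epsilon,L})}$ contains $\rho^{1-p'}\in L^1(R_1,\xi)$, the function $G$ is absolutely continuous and increasing on $[R_1,\xi]$ with $G(R_1^+)=0$, and (as $\sigma>0$ a.e.) it also gives $\int_r^{\xi}\sigma<C\,G(r)^{-\epsilon}$ for $r\in(R_1,\xi)$. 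We may assume $u$ is not identically zero on any $(R_1,R_1+\delta)$, which is automatic for eigenfunctions. First I would fix $a_1\in(R_1,\xi)$ so small that
\[
\Lambda:=\lambda^{\frac1{p-1}}C^{\frac1{p-1}}\,\frac{p-1}{p-1-\epsilon}\,G(a_1)^{\frac{p-1-\epsilon}{p-1}}\le 1,
\]
which is possible because $\frac{p-1-\epsilon}{p-1}>0$ and $G(a_1)\to0$ as $a_1\to R_1^+$.

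The main step is to show $u\ne0$ throughout $(R_1,a_1)$. Assume not. Using $u(R_1)=0$ I would pick a zero $z\in(R_1,a_1)$ of $u$ and a point $\beta\in[R_1,z)$ — the largest zero of $u$ below $z$, or $\beta=R_1$ if there is none — so that $u$ has a constant sign on $(\beta,z)$ and $u\to0$ as $r\to\beta^+$; after replacing $u$ by $-u$ (again a solution of \eqref{ode}) we may assume $u>0$ on $(\beta,z)$. Then $u$ extends continuously to $[\beta,z]$ with $u(\beta)=u(z)=0$, so it attains $M:=\max_{[\beta,z]}u>0$ at an interior point $z_1$, where $u'(z_1)=0$, i.e.\ $\psi(z_1)=0$; since $\psi'=-\lambda\sigma u^{p-1}<0$ a.e.\ on $(\beta,z)$, the flux $\psi$ is strictly decreasing there, so $\psi>0$ (equivalently $u'>0$) on $(\beta,z_1)$ and $\psi(z)<0$. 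Integrating \eqref{ode} from $r\in(\beta,z_1)$ up to $z$ gives $\psi(r)=\psi(z)+\lambda\int_r^{z}\sigma u^{p-1}\le\lambda M^{p-1}\int_r^{\xi}\sigma$, hence $\rho(r)\,u'(r)^{p-1}\le\lambda M^{p-1}\int_r^{\xi}\sigma$ and
\[
u'(r)\le\lambda^{\frac1{p-1}}M\Big(\int_r^{\xi}\sigma(\tau)\diff\tau\Big)^{\frac1{p-1}}\rho^{1-p'}(r),\qquad r\in(\beta,z_1).
\]
Integrating this over $(\beta,z_1)$, noting $M=\int_\beta^{z_1}u'$, dividing by $M>0$, and inserting $\int_r^{\xi}\sigma<C\,G(r)^{-\epsilon}$ yields
\[
1<\lambda^{\frac1{p-1}}C^{\frac1{p-1}}\int_{R_1}^{z_1}G(r)^{-\frac{\epsilon}{p-1}}G'(r)\diff r=\lambda^{\frac1{p-1}}C^{\frac1{p-1}}\,\frac{p-1}{p-1-\epsilon}\,G(z_1)^{\frac{p-1-\epsilon}{p-1}}\le\Lambda\le1,
\]
a contradiction — the integral being finite exactly because $\epsilon<p-1$, and $G(z_1)<G(a_1)$ because $z_1<z<a_1$.

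Finally, to obtain an interval where also $u'\ne0$: having shown $u\ne0$ on $(R_1,a_1)$, $u$ has a fixed sign there, say $u>0$, so $\psi'=-\lambda\sigma u^{p-1}<0$ a.e.\ forces $\psi$ to be strictly decreasing, hence $\psi$ has at most one zero in $(R_1,a_1)$; if it has none, put $a:=a_1$, and otherwise let $a\in(R_1,a_1)$ be that unique zero. In either case $u$ and $u'$ (which has the sign of $\psi$) are both nonvanishing on $(R_1,a)$, and $a\le a_1<\xi$, which is the claim. I expect the delicate point to be the selection of $z,\beta$ in the middle step — it must treat uniformly both possibilities, namely zeros of $u$ clustering at $R_1$ and a single isolated first zero above $R_1$ — together with the realization that one must integrate \eqref{ode} from the interior maximum $z_1$ \emph{up to} the zero $z$, where $\psi(z)<0$, rather than across $z_1$ or down towards $\beta$, so that the boundary flux term is discarded with the sign in our favour (near $R_1$ there is no usable bound on $\psi$ itself).
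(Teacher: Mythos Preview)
Your argument is correct and, in fact, more explicit than what the paper provides: the paper does not prove this lemma at all but merely refers to \cite[Theorem~1.14]{Opic-Kufner} combined with the argument of \cite[Proposition~4.3]{Drabek-Kuliev}. Your proof unwinds that machinery at the ODE level, replacing the abstract weighted Hardy inequality by the single explicit computation
\[
\int_{R_1}^{z_1}\Big(\int_r^{\xi}\sigma\Big)^{\frac1{p-1}}\rho^{1-p'}(r)\,\diff r
< C^{\frac1{p-1}}\int_{R_1}^{z_1}G(r)^{-\frac{\epsilon}{p-1}}G'(r)\,\diff r
=\frac{(p-1)\,C^{\frac1{p-1}}}{p-1-\epsilon}\,G(z_1)^{\frac{p-1-\epsilon}{p-1}},
\]
which is precisely what $\mathrm{(A_{\epsilon,L})}$ is designed to yield. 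The gain is self-containment and transparency about where the hypothesis $\epsilon<p-1$ enters; the cited route has the advantage that the Hardy inequality, once in hand, immediately gives nonoscillation via a standard variational/Wronskian comparison without having to locate the interior maximum $z_1$ and integrate the flux by hand.

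Two small points worth tightening. First, the selection of $(\beta,z)$: rather than speaking of ``the largest zero below $z$'' (which may fail if zeros accumulate), pick any point $r_0\in(R_1,a_1)$ with $u(r_0)\neq0$ --- such a point exists by your standing assumption that $u\not\equiv0$ near $R_1$ --- and set $\beta:=\sup\{r\in[R_1,r_0):u(r)=0\}$, $z:=\inf\{r\in(r_0,a_1]:u(r)=0\}$; this produces the desired sign interval without further case analysis. Second, in the final paragraph you should note that the case $\psi<0$ throughout $(R_1,a_1)$ is excluded: since $u(R_1^+)=0$ and $u>0$ on $(R_1,a_1)$, the mean value theorem forces $u'>0$ (hence $\psi>0$) at points arbitrarily close to $R_1$, so the strictly decreasing $\psi$ is positive on a right neighbourhood of $R_1$.
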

Thanks to Lemma~\ref{non-oscillation.L} and the technique used in \cite[Proof of Theorem 1.1]{Dra-Kuf-Kul}, we now prove the behavior of $u(x)$ and $\nabla u(x)$ as $|x|\to R_1^+$, 
provided hypothesis of Theorem \ref{radialsoldecayleft} is satisfied.
	\begin{proof}[Proof of Theorem~\ref{radialsoldecayleft}]
	Since $u\in C^1(R_1,R_2)$ and $u(R_1)=u(R_2)=0$, there exists $r_0\in (R_1,R_2)$ such that $u'(r_0)=0.$ Take $\tilde{a}:=\min\{r \in (R_1,R_2): u'(r)=0\}$. Then, $\tilde{a}\in (R_1,R_2)$ in view of Lemma~\ref{non-oscillation.L}. Clearly, $u(r)$ satisfies 
	\begin{align*}
	\begin{cases}
	-(\rho(r)|u'(r)|^{p-2}u'(r))'= \lambda \sigma(r)|u(r)|^{p-2}u(r), \ \ r \in (R_1,R_2),\\
	u(R_1)=0=u'(\tilde{a}).
	\end{cases}
	\end{align*}
	Then, \[\rho(r)|u'(r)|^{p-2}u'(r) = \lambda \int_r^{\tilde{a}} \sigma(r)|u(\tau)|^{p-2}u(\tau)\diff \tau, \ \ \forall r \in (R_1,\tilde{a}).. \]
	We may assume that $u'(r)>0$ in $(R_1,\tilde{a})$ and hence $u(r)>0$ in $(R_1,\tilde{a})$.	
	Thus, we have 
	\begin{equation}  \label{derivativesolr1}
	u'(r)= \lambda^{\frac{1}{p-1}}\rho^{1-p'}(r)\left( \int_r^{\tilde{a}} \sigma(\tau) u^{p-1}(\tau) \diff \tau\right)^{\frac{1}{p-1}}, \ \ \forall r \in (R_1,\tilde{a}).
	\end{equation}
	Hence 
	\begin{align}
	u(r)= \lambda^{\frac{1}{p-1}} \int_{R_1}^r \rho^{1-p'}(t)\left( \int_t^{\tilde{a}} \sigma(\tau) u^{p-1}(\tau) \diff \tau\right)^{\frac{1}{p-1}} \diff t, \ \ \forall r \in (R_1,\tilde{a}). \label{estextsol3}
	\end{align}
	\textbf{Estimates from below:}	Fix $a \in (R_1,\tilde{a})$, then 
	\begin{equation*}
	u(r) \geq \lambda^{\frac{1}{p-1}} \biggl(\int_{a}^{\tilde{a}} \sigma(\tau)u^{p-1}(\tau) \diff \tau \biggr)^{\frac{1}{p-1}} \int_{R_1}^r \rho^{1-p'}(t) \diff t , \ \ \forall r \in (R_1,a),
	\end{equation*}
	i.e.,
	$$	u(r) \geq C_1 \int_{R_1}^r \rho^{1-p'}(t) \diff t, \ \ \forall r \in (R_1,a),$$
where $C_1:=\lambda^{\frac{1}{p-1}} \biggl(\int_{a}^{\tilde{a}} \sigma(\tau)u^{p-1}(\tau) \diff \tau \biggr)^{\frac{1}{p-1}}.$
	 
	To obtain an estimate from below of the derivative of solution, we use  \eqref{derivativesolr1} to get
	\begin{equation*}
	u'(r) \geq \lambda^{\frac{1}{p-1}} \rho^{1-p'}(r) \bigg(\int_{a}^{\tilde{a}} \sigma(\tau)u^{p-1}(\tau)\diff \tau\bigg)^{\frac{1}{p-1}}, \ \forall r \in (R_1,a),
	\end{equation*}
	i.e.,
	\[u'(r) \geq C_1 \rho^{1-p'}(r), \ \ \forall r \in (R_1,a).\] 
	
	\noindent\textbf{Estimates from above:} We proceed with an iteration argument.
	
	\textbf{1st Step:} From \eqref{estextsol3} and H\"older's inequality, for all $r \in (R_1, \tilde{a})$, we have 
	\begin{align*}
	u(r) &\leq \lambda^{\frac{1}{p-1}} \int_{R_1}^r \rho^{1-p'} (t)\left( \int_t^{\tilde{a}} \sigma(\tau) \diff \tau \right)^{\frac{1}{p(p-1)}}  \left( \int_t^{\tilde{a}} \sigma(\tau)u^p(\tau) \diff \tau\right)^{\frac{1}{p}} \diff t\\
	&\leq \lambda^{\frac{1}{p-1}} \left( \int_{R_1}^{\tilde{a}} \sigma(\tau)u^p(\tau) \diff \tau\right)^{\frac{1}{p}} \int_{R_1}^r \rho^{1-p'}(t) \left( \int_t^{\tilde{a}} \sigma(\tau) \diff \tau \right)^{\frac{1}{p(p-1)}} \diff t,
	\end{align*}
	i.e., 
	\begin{equation}
	u(r) \leq c_1 \int_{R_1}^r \rho^{1-p'}(t) I_1^{\frac{1}{p-1}}(t)\diff t, \ \ \forall r \in (R_1,\tilde{a}),    \label{estextsol5}
	\end{equation}
	where $c_1:= \lambda^{\frac{1}{p-1}}\left( \int_{R_1}^{\tilde{a}} \sigma(\tau) u^p(\tau) \diff \tau \right) ^{\frac{1}{p}}$ and 
	\begin{equation*}
	I_1(t):= \left( \int_t^{\tilde{a}}\sigma(\tau) \diff \tau\right)^{\frac{1}{p}}, \ \ \forall t \in (R_1,\tilde{a}).
	\end{equation*}
	Here we note that $c_1\in (0,\infty)$ since $$\int_{A_{R_1}^{R_2}}w(|x|)|u|^p\diff x=\frac{1}{\lambda}\int_{A_{R_1}^{R_2}}v(|x|)|\nabla u|^p\diff x<\infty.$$
	
	\textbf{2nd Step:} Using \eqref{estextsol5} in \eqref{estextsol3}, we get
	\begin{align*}
	u(r) \leq \lambda^{\frac{1}{p-1}} \int_{R_1}^r \rho^{1-p'} (t) \left[ \int_t^{\tilde{a}}\sigma(\tau) \left( c_1 \int_{R_1}^{\tau} \rho^{1-p'}(t_1) I_1^{\frac{1}{p-1}}(t_1) dt_1\right)^{p-1} \diff \tau\right]^{\frac{1}{p-1}}\diff t
	\end{align*}
		i.e.,
		$$u(r) \leq c_2 \int_{R_1}^r \rho^{1-p'}(t) I_2^{\frac{1}{p-1}}(t) \diff t, \ \ \forall r \in (R_1,\tilde{a}),$$
	where $c_2:= \lambda^{\frac{1}{p-1}} c_1$ and 
	\begin{equation*}
I_2(t):= \int_t^{\tilde{a}}\sigma(\tau) \left(\int_{R_1}^{\tau} \rho^{1-p'}(t_1) I_1^{\frac{1}{p-1}}(t_1) \diff t_1\right)^{p-1} \diff \tau.
	\end{equation*}
	
	\textbf{nth Step:} By induction, we obtain the following estimate for arbitrary $n$,
	\begin{equation}
	u(r) \leq c_n \int_{R_1}^r \rho^{1-p'}(t) I_n^{\frac{1}{p-1}}(t) \diff t, \ \ \forall r \in (R_1,\tilde{a}) ,    \label{estextsol6}
	\end{equation} 
	where $c_n:= \lambda^{\frac{1}{p-1}}c_{n-1}$ and 
	\begin{equation}\label{In}
I_n(t):= \int_t^{\tilde{a}}\sigma(\tau) \left(\int_{R_1}^{\tau} \rho^{1-p'}(t_{n-1}) I_{n-1}^{\frac{1}{p-1}}(t_{n-1}) \diff t_{n-1}\right)^{p-1} \diff \tau, \ \ \forall t \in (R_1,\tilde{a}).
	\end{equation}
	By \eqref{estextsol6}, to prove upper estimate for the solution $u$ near $\partial B_{R_1}$ it is sufficient to show that there exists $n \in \mathbb{N}$ and a constant $C>0$ such that 
	\[I_n(t) < C, \ \ \forall t \in (R_1,\tilde{a}).\] 
	To this end, fix $\tilde{\xi}\in(\max\{\xi,\tilde{a}\},R_2),$  where $\xi$ appears in $\mathrm{(A_{\epsilon,L})}$. By $\mathrm{(W)}$ and $\mathrm{(A_{\epsilon,L})},$ there exists a constant $\bar{C}>0$ such that
	\begin{equation}
	\left( \int_r^{\tilde{\xi}} \sigma(\tau)\diff \tau \right) \left( \int_{R_1}^r \rho^{1-p'}(\tau) \diff \tau\right)^{\epsilon} < \bar{C}, \ \ \forall r \in (R_1, \tilde{\xi}).   \label{estextsol7}
	\end{equation}
	Indeed, for $r \in (R_1,\xi)$, we have 
	\begin{align*}
	&\left( \int_r^{\tilde{\xi}} \sigma(\tau)\diff \tau \right) \left( \int_{R_1}^r \rho^{1-p'}(\tau) \diff \tau\right)^{\epsilon}\\ 
	&\quad=	\left( \int_r^{\xi} \sigma(\tau)\diff \tau \right) \left( \int_{R_1}^r \rho^{1-p'}(\tau) \diff \tau\right)^{\epsilon} + 	\left( \int_{\xi}^{\tilde{\xi}} \sigma(\tau)\diff \tau \right) \left( \int_{R_1}^r \rho^{1-p'}(\tau) \diff \tau\right)^{\epsilon} \\
	&\quad	\leq C + 	\left( \int_{\xi}^{\tilde{\xi}} \sigma(\tau)\diff \tau \right) \left( \int_{R_1}^{\xi}\rho^{1-p'}(\tau) \diff s\right)^{\epsilon} := \bar{C}_1. 
	\end{align*}
	For $r \in [\xi, \tilde{\xi}]$, we have 
	\begin{align*}
	\biggl(&\int_r^{\tilde{\xi}} \sigma(\tau) \diff \tau \biggr) \biggl(\int_{R_1}^r \rho^{1-p'}(\tau) \diff \tau\biggr)^{\epsilon} 
	&\leq   \biggl(\int_{\xi}^{\tilde{\xi}} \sigma(\tau) \diff \tau \biggr) \biggl(\int_{R_1}^{\tilde{\xi}}\rho^{1-p'}(\tau) \diff \tau\biggr)^{\epsilon} =: \bar{C}_2.
	\end{align*}
	Take $\bar{C} = \max \{\bar{C}_1, \bar{C}_2\}$, we obtain \eqref{estextsol7}.
	
	\noindent Similarly, we may also assume that \eqref{estextsol7} holds for  $\epsilon$ satisfying
	\begin{equation}\label{estextsol8}
		\epsilon \neq \frac{kp(p-1)}{kp+1}, \forall k \in \mathbb{N}_0\quad \text{i.e.,}\quad \frac{1}{p}-k\frac{p-1-\epsilon}{\epsilon}\ne 0, \ \forall k \in \mathbb{N}_0.   
		\end{equation}
		We now use \eqref{estextsol7} and \eqref{estextsol8}
		 to estimate $I_n(t)$. Let $n_0\in \mathbb{N}$ such that
		\begin{align*}
		\frac{\epsilon}{p(p-1-\epsilon)} +1<n_0 < \frac{\epsilon}{p(p-1-\epsilon)} +2,
		\end{align*}
		i.e., $n_0$ is the smallest integer $n$ such that 
		\[\frac{1}{p} - (n-1)\frac{p-1-\epsilon}{\epsilon}<0.\]
Clearly, $n_0\geq 2.$ We first prove the following estimate for $I_n.$

\textbf{Claim 1.} For each $n\in \{1,\cdots, n_0-1\},$ there exists $\tilde{c}_n>0$ such that
\begin{equation}\label{induction.est.In}
I_n(t) \leq \tilde{c}_{n} \left[\int_t^{\tilde{\xi}} \sigma (\tau) \diff \tau\right] ^{\frac{1}{p}-(n-1)\frac{p-1-\epsilon}{\epsilon}},\ \  \forall t \in (R_1,\tilde{a}).
\end{equation}
We prove the Claim 1 by induction. The conclusion is obvious if $n_0=2.$ Suppose that $n_0\geq 3$ and \eqref{induction.est.In} holds for some $n$ with $1\leq n<n_0-1.$ We prove that \eqref{induction.est.In} holds for $n+1$ too. Indeed, from \eqref{In}, \eqref{estextsol7}  and \eqref{induction.est.In} we have 
\begin{align}
I_{n+1}(t) &\leq \int_t^{\tilde{a}}\sigma(\tau) \biggl[\int_{R_1}^{\tau} \rho^{1-p'}(t_{n}) \tilde{c}^{\frac{1}{p-1}}_{n}\bigg(\int_{t_{n}}^{\tilde{\xi}}\sigma(t_{n-1}) \diff t_{n-1} \bigg)^{\frac{1}{p(p-1)} -  \frac{(n-1)(p-1-\epsilon)}{\epsilon(p-1)}}\diff t_{n}\biggr]^{p-1} \diff \tau \notag \\
& \leq \tilde{c}_{n+1}^1 \int_t^{\tilde{a}}\sigma(\tau) \biggl[\int_{R_1}^{\tau} \rho^{1-p'}(t_{n}) \bigg(\int_{R_1}^{t_{n}} \rho^{1-p'}(t_{n-1}) \diff t_{n-1} \bigg)^{-\frac{\epsilon}{p(p-1)}+\frac{(n-1)(p-1-\epsilon)}{p-1}} \diff t_{n}\biggr]^{p-1} \diff \tau\notag\\
&= \tilde{c}_{n+1}^2 \int_t^{\tilde{a}} \sigma(\tau) \left(\int_{R_1}^{\tau}\rho^{1-p'}(t_{n}) \diff t_{n}\right)^{-\frac{\epsilon}{p}+(n-1)(p-1-\epsilon)+p-1 }\diff \tau, \ \ \forall t \in (R_1,\tilde{a}).   \label{In+1}
\end{align}
Here we note that $ -\frac{\epsilon}{p(p-1)}+\frac{(n-1)(p-1-\epsilon)}{p-1}+1\geq 1-\frac{\epsilon}{p-1}>0.$
From  \eqref{estextsol7}, \eqref{In+1} and noticing $\frac{1}{p}-\frac{n(p-1-\epsilon)}{\epsilon}>0$, we have 
\begin{align*}
I_{n+1}(t) &\leq  \tilde{c}_{n+1}^3 \int_t^{\tilde{a}} \sigma(\tau) \left[ \int_{\tau}^{\tilde{\xi}} \sigma(t_{n})\diff t_{n}\right]^{\frac{1}{p}-\frac{(n-1)(p-1-\epsilon)}{\epsilon}-\frac{p-1}{\epsilon}} \diff \tau\\
&=-\tilde{c}_{n+1}^3 \int_{t}^{\tilde{a}} \left[\int_{\tau}^{\tilde{\xi}} \sigma(t_{n}) \diff t_{n}\right]^ {\frac{1}{p}-\frac{(n-1)(p-1-\epsilon)}{\epsilon}-\frac{p-1}{\epsilon}}  \diff \left(\int_{\tau}^{\tilde{\xi}} \sigma(t_{n}) \diff t_{n}\right)\\
&= -\frac{\tilde{c}_{n+1}^3}{\frac{1}{p}-\frac{n(p-1-\epsilon)}{\epsilon}} \left[\int_{\tau}^{\tilde{\xi}} \sigma(t_{n}) \diff t_{n}\right]^{\frac{1}{p}-\frac{n(p-1-\epsilon)}{\epsilon}}\bigg| _{\tau=t}^{\tau=\tilde{a}}\\
&\leq \tilde{c}_{n+1} \left[\int_{t}^{\tilde{\xi}} \sigma(t_{n}) \diff t_{n}\right]^{\frac{1}{p}-\frac{n(p-1-\epsilon)}{\epsilon}}, \ \ \forall t \in (R_1,\tilde{a}),
\end{align*}
where  $$\tilde{c}_{n+1} := \frac{\tilde{c}^3_{n+1}}{\frac{1}{p}-\frac{n(p-1-\epsilon)}{\epsilon}}.$$
Therefore, \eqref{induction.est.In} also holds for $n+1$ and hence, Claim 1 is proved.

\textbf{Claim 2.} There exists $\tilde{c}_{n_0}>0$ such that $I_{n_0}(t) < \tilde{c}_{n_0}$ for all $t\in (R_1,\tilde{a})$.

\noindent Indeed, from \eqref{In}, \eqref{estextsol7} and applying \eqref{induction.est.In} for $n=n_0-1$, we obtain 
\begin{align*}
I_{n_0}(t) &\leq \int_t^{\tilde{a}}\sigma(\tau) \biggl[\int_{R_1}^{\tau} \rho^{1-p'}(t_{n_0-1}) \tilde{c}^{\frac{1}{p-1}}_{n_0-1}\times \notag\\   &\ \ \ \  \qquad \times\bigg(\int_{t_{n_0-1}}^{\tilde{\xi}}\sigma(t_{n_0-2}) \diff t_{n_0-2} \bigg)^{\frac{1}{p(p-1)} -  \frac{(n_0-2)(p-1-\epsilon)}{\epsilon(p-1)}}\diff t_{n_0-1}\biggr]^{p-1} \diff \tau \notag \\
& \leq \tilde{c}_{n_0}^1 \int_t^{\tilde{a}}\sigma(\tau) \biggl[\int_{R_1}^{\tau} \rho^{1-p'}(t_{n_0-1})\times \notag\\ &\ \ \ \ \qquad \times \bigg(\int_{R_1}^{t_{n_0-1}} \rho^{1-p'}(t_{n_0-2}) \diff t_{n_0-2} \bigg)^{-\frac{\epsilon}{p(p-1)}+\frac{(n_0-2)(p-1-\epsilon)}{p-1}} \diff t_{n_0-1}\biggr]^{p-1} \diff \tau 
\end{align*}
for all $ t \in (R_1,\tilde{a}).$ 
Taking into account $-\frac{\epsilon}{p(p-1)}+\frac{(n_0-2)(p-1-\epsilon)}{p-1}+1\geq 1-\frac{\epsilon}{p-1}>0$, we obtain from the last estimate that there exists $\tilde{c}_{n_0}^2>0 $ such that
\begin{equation}\label{In0}
I_{n_0}(t)\leq \tilde{c}_{n_0}^2 \int_t^{\tilde{a}} \sigma(\tau) \left(\int_{R_1}^{\tau}\rho^{1-p'}(t_{n_0-1}) \diff t_{n_0-1}\right)^{-\frac{\epsilon}{p}+(n_0-2)(p-1-\epsilon)+p-1 }\diff \tau, \ \ \forall t \in (R_1,\tilde{a}).   
\end{equation}
From  \eqref{estextsol7}, \eqref{In0} and noticing $\frac{1}{p}-(n_0-1)\frac{p-1-\epsilon}{\epsilon}<0$, there is $\tilde{c}_{n_0}^3 $ such that
\begin{align*}
I_{n_0}(t) &\leq  \tilde{c}_{n_0}^3 \int_t^{\tilde{a}} \sigma(\tau) \left[ \int_{\tau}^{\tilde{\xi}} \sigma(t_{n_0-1})\diff t_{n_0-1}\right]^{\frac{1}{p}-\frac{(n_0-1)(p-1-\epsilon)}{\epsilon}} \diff \tau\\
&=-\tilde{c}_{n_0}^3 \int_{t}^{\tilde{a}} \left[\int_{\tau}^{\tilde{\xi}} \sigma(t_{n_0-1}) \diff t_{n_0-1}\right]^ {\frac{1}{p}-\frac{(n_0-2)(p-1-\epsilon)}{\epsilon}-\frac{p-1}{\epsilon}}  \diff \left(\int_{\tau}^{\tilde{\xi}} \sigma(t_{n_0-1}) \diff t_{n_0-1}\right)\\
&= -\frac{\tilde{c}_{n_0}^3}{\frac{1}{p}-\frac{(n_0-1)(p-1-\epsilon)}{\epsilon}} \left[\int_{\tau}^{\tilde{\xi}} \sigma(t_{n_0-1}) \diff t_{n_0-1}\right]^{\frac{1}{p}-\frac{(n_0-1)(p-1-\epsilon)}{\epsilon}}\bigg| _{\tau=t}^{\tau=\tilde{a}}\\
&\leq -\frac{\tilde{c}_{n_0}^3}{\frac{1}{p}-\frac{(n_0-1)(p-1-\epsilon)}{\epsilon}} \left[\int_{\tilde{a}}^{\tilde{\xi}} \sigma(t_{n_0-1}) \diff t_{n_0-1}\right]^{\frac{1}{p}-\frac{(n_0-1)(p-1-\epsilon)}{\epsilon}} =: \tilde{c}_{n_0}, \ \ \forall t \in (R_1,\tilde{a}).
\end{align*}
Thus, we have proved Claim 2.

\noindent By Claim 2, we get from \eqref{estextsol6} that
	\begin{equation}\label{est.u.above}
	u(r) \leq  C_2  \int_{R_1}^r \rho^{1-p'}(t) \diff t,\quad \forall r \in (R_1,\tilde{a}),
	\end{equation}
	where $C_2:=c_{n_0} \tilde{c}^{\frac{1}{p-1}}_{n_0}.$

	Finally, we look for the estimate of $u'$ from above. By \eqref{estextsol7} and \eqref{est.u.above}, we have 
	\begin{align*}
	\int_{r}^{\tilde{a}} \sigma(\tau) u^{p-1}(\tau) \diff \tau &\leq C_2^{p-1} \int_{t}^{\tilde{a}} \sigma(\tau) \bigg(\int_{R_1}^{\tau} \rho^{1-p'}(t) \diff t\bigg)^{p-1} \diff \tau\\
	&\leq \bar{C}_2 \int_{r}^{\tilde{a}} \sigma (\tau)\bigg(\int_{\tau}^{\tilde{\xi}} \sigma(t) \diff t \bigg)^{-\frac{p-1}{\epsilon}} \diff \tau\\
	&= \bar{C}_2 \bigg(\frac{\epsilon}{p-1-\epsilon}\bigg) \bigg(\int_{\tau}^{\tilde{\xi}} \sigma(t) \diff t \bigg)^{-\frac{p-1-\epsilon}{\epsilon}}\bigg|_{\tau=t}^{\tilde{a}}\\
	&\leq\frac{\epsilon\bar{C}_2 }{p-1-\epsilon} \bigg(\int_{\tilde{a}}^{\tilde{\xi}} \sigma(\tau) \diff \tau\bigg)^{-\frac{p-1-\epsilon}{\epsilon}},\quad \forall r\in (R_1,\tilde{a}).
	\end{align*}
Combining this and \eqref{derivativesolr1} we deduce 
	\[u'(r) \leq \tilde{C}_2 \rho^{1-p'}(r), \ \ \forall r \in (R_1,a).\]  
\end{proof}	
The asymptotic estimates of solutions towards the boundary $\partial B_{R_2}$ are obtained in the same manner. As before, we need the following nonoscillatory property and its proof can be obtained by invoking \cite[Theorem 6.2]{Opic-Kufner} and using a similar argument to that of \cite[Proof of Proposition 4.3]{Drabek-Kuliev}. Therefore, we omit it. 
\begin{lemma}[Nonoscillatory II]\label{non-oscillation.R}
	Assume that $\mathrm{(A_{\epsilon,R})}$ holds. 	Then for a solution $u\in C^1(R_1,R_2)$ of \eqref{ode} with $u(R_2)=0,$ there exists $b\in (\xi,R_2)$ such that $u(r)\ne 0$ and $u'(r)\ne 0$ for all $r\in (b,R_2).$
\end{lemma}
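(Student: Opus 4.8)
The plan is to follow the pattern of Lemma~\ref{non-oscillation.L}, interchanging the roles of $R_1$ and $R_2$, and to split the argument into two steps: first a non-oscillation step showing that $u$ keeps a constant sign on a left neighbourhood of $R_2$, and then a monotonicity step showing that $u'$ does not vanish there. Throughout we may assume $u\not\equiv 0$ (for $u\equiv 0$ the statement is vacuous and is understood to concern nontrivial solutions).

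\emph{Step 1 (non-oscillation at $R_2$).} I would show that there is $b_1\in(\xi,R_2)$ with $u(r)\neq 0$ for all $r\in(b_1,R_2)$. This is precisely the non-oscillation criterion \cite[Theorem 6.2]{Opic-Kufner}, whose hypothesis---the finiteness of $\int^{R_2}\sigma(r)\big(\int_r^{R_2}\rho^{1-p'}(\tau)\diff\tau\big)^{p-1}\diff r$---is guaranteed by Remark~\ref{Rmk.non-oscillation}, since, using $\rho^{1-p'}\in L^1(\xi,R_2)$ from $\mathrm{(A_{\epsilon,R})}$, one has $P(r)=\big(\int_r^{R_2}\rho^{1-p'}\big)^{p-1}$ for $r$ near $R_2$. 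A self-contained route runs as follows: since nontrivial solutions of the half-linear equation \eqref{ode} have only simple, hence isolated, zeros (uniqueness theory for the half-linear ODE), if the claim failed $u$ would have a strictly increasing sequence of zeros $z_n\uparrow R_2$ with $u$ of constant sign on each $(z_n,z_{n+1})$. Testing \eqref{ode} with $u$ on $(z_n,z_{n+1})$ and integrating by parts (the boundary terms vanish because $u(z_n)=u(z_{n+1})=0$ and $\rho|u'|^{p-2}u'$ is continuous) gives
\[ \int_{z_n}^{z_{n+1}}\rho(t)|u'(t)|^p\diff t=\lambda\int_{z_n}^{z_{n+1}}\sigma(t)|u(t)|^p\diff t . \]
Writing $u(r)=-\int_r^{z_{n+1}}u'(t)\diff t$ and using H\"older's inequality yields $|u(r)|^p\le\big(\int_r^{R_2}\rho^{1-p'}\big)^{p-1}\int_{z_n}^{z_{n+1}}\rho|u'|^p$ for $r\in(z_n,z_{n+1})$; multiplying by $\sigma$, integrating, and cancelling the finite positive factor $\int_{z_n}^{z_{n+1}}\rho|u'|^p$ gives $1\le\lambda\int_{z_n}^{R_2}\sigma(r)\big(\int_r^{R_2}\rho^{1-p'}\big)^{p-1}\diff r$, whose right-hand side tends to $0$ as $n\to\infty$ by the integrability above---a contradiction. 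Hence $u$ has a constant sign on some $(b_1,R_2)$, and replacing $u$ by $-u$ if needed (since \eqref{ode} is odd) we may assume $u>0$ on $(b_1,R_2)$.

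\emph{Step 2 ($u'\neq 0$ near $R_2$).} Put $\Phi(r):=\rho(r)|u'(r)|^{p-2}u'(r)$. By $\mathrm{(W)}$ the function $\Phi$ is continuous, and \eqref{ode} says $\Phi$ is locally absolutely continuous with $\Phi'=-\lambda\sigma|u|^{p-2}u<0$ a.e. on $(b_1,R_2)$; hence $\Phi$ is strictly decreasing there. If $\Phi\ge 0$ on all of $(b_1,R_2)$, then $u'\ge 0$, so $u$ is non-decreasing, whence $u(R_2)=\lim_{r\to R_2^-}u(r)\ge u(b_1)>0$, contradicting $u(R_2)=0$. Therefore $\Phi(b)<0$ for some $b\in(b_1,R_2)$, and then $\Phi(r)<\Phi(b)<0$, i.e. $u'(r)\neq 0$, for all $r\in(b,R_2)$; moreover $u(r)>0\neq 0$ on $(b,R_2)$ and $b\in(\xi,R_2)$, which is the assertion.

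I expect the substantive point to be Step~1: verifying that $\mathrm{(A_{\epsilon,R})}$ implies the finiteness of $\int^{R_2}\sigma(r)\big(\int_r^{R_2}\rho^{1-p'}\big)^{p-1}\diff r$ (which is exactly where Remark~\ref{Rmk.non-oscillation} and the restriction $\epsilon<p-1$ enter), and, for the self-contained route, invoking the uniqueness theory for the half-linear ODE so that ``consecutive zeros'' of a nontrivial solution are well defined. Step~2 is routine once $\mathrm{(W)}$ is in force.
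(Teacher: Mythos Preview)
Your argument is correct and matches the paper's indicated route (invoke \cite[Theorem~6.2]{Opic-Kufner} and argue as in \cite[Proof of Proposition~4.3]{Drabek-Kuliev}), with the bonus of a clean self-contained alternative for the non-oscillation in Step~1. One cosmetic slip in Step~2: you write $u(b_1)>0$, but only $u>0$ on the \emph{open} interval $(b_1,R_2)$ is known---replace $b_1$ by any interior point $r_0\in(b_1,R_2)$ and the contradiction with $u(R_2)=0$ goes through unchanged.
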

Using Lemma~\ref{non-oscillation.R} and similar argument as in the proof of Theorem~\ref{radialsoldecayleft} we prove Theorem~\ref{radialsoldecayright} as follows.
\begin{proof}[Proof of Theorem~\ref{radialsoldecayright}]

Let $\tilde{b}:=\max \{r \in (R_1,R_2): u'(r)=0\}$. Then $\tilde{b}\in (R_1,R_2)$ in view of Lemma~\ref{non-oscillation.R}. 	We have $u \in C^1(R_1,R_2)$ satisfies 
\begin{align*}
\begin{cases}
-(\rho(r)|u'(r)|^{p-2}u'(r))' = \lambda \sigma(r) |u(r)|^{p-2}u(r), \ \ r \in (R_1,R_2),\\
u'(\tilde{b})=0=u(R_2).
\end{cases}
\end{align*}
We may assume that $u'(r)<0$ in $(\tilde{b},R_2)$ and hence $u(r)>0$ in $(\tilde{b},R_2)$.	
Thus, we have 
\[-u'(r)= \lambda^{\frac{1}{p-1}} \rho^{1-p'} (r) \left(\int_{\tilde{b}}^{r}\sigma(t) u^{p-1}(t) \diff t \right)^{\frac{1}{p-1}},\quad r\in(\tilde{b},R_2).\]
Using this and the fact that $u(R_2)=0$, we get 
\begin{equation*}
u(r)= \lambda^{\frac{1}{p-1}} \int_{r}^{R_2} \rho^{1-p'}(t)\left( \int_{\tilde{b}}^t \sigma(\tau) u^{p-1}(\tau) \diff \tau\right)^{\frac{1}{p-1}} \diff t, \ \ \forall r \in (\tilde{b},R_2).
\end{equation*}
The rest of the proof is similar to that of the proof of Theorem~\ref{radialsoldecayleft} for which we mofdify $\mathrm{(A_{\epsilon,R})}$ as 
\begin{align*}
\left(\int_{\tilde{\xi}}^{r} \sigma(\tau) \diff \tau \right) \left( \int_{r}^{R_2} \rho^{1-p'}(\tau) \diff \tau\right)^{\epsilon} &\leq \bar{C},\quad \forall r\in (\tilde{\xi},R_2)
\end{align*}
for some fixed $\tilde{\xi} \in (R_1, \min\{\tilde{b},\xi\})$.
\end{proof}


\section{Applications}\label{Sec.Applications}
In this section we give concrete examples to illustrate our main results. Consider the following equation
\begin{equation}\label{Ex}
-\operatorname{div}\left( v(|x|) |\nabla u| ^{p-2}\nabla u\right)=\lambda w(|x|)|u|^{p-2}u \quad \text{in } B_1^c
\end{equation}
with $v(|x|)=(|x|-1)^\alpha$ and $w\in L^1_{\loc}(1,\infty)$ such that $w>0$ a.e. in $(1,\infty).$ Note that for such weights $v,w$, the condition ($\mathrm{W}$) is clearly satisfied.
\begin{example}[Degenerate weight]\label{degenexample}\rm
	Let $0\leq \alpha<p-1$.
	\begin{itemize}
		\item If $p \neq N$ and $w \in L^1\big((1,\infty);(r-1)^{p-1-\alpha}\big) \cap L^1\big((1,\infty);(r-1)^{p-1}\big),$ then $v,w$ satisfy $\mathrm{(W_1)}$ of Corollary~\ref{exteriorembedding} and hence,
		$$\mathcal{D}^{1,p}_0(B_1^c; v) \hookrightarrow  \hookrightarrow L^p(B_1^c;w).$$
		In this case, the eigenvalue problem \eqref{Ex} has a principal eigenpair due to Theorem~\ref{Theore.eigenpair}.
		
		\item If $w \in L^1\big((1,\xi);(r-1)^{\delta}\big)$ for some $\xi\in(1,\infty)$ and $\delta<p-1-\alpha,$ then  $\mathrm{(A_{\epsilon,L})}$ holds for $\epsilon\in (\frac{(p-1)\delta}{p-1-\alpha},p-1)\cap (0,\infty)$. By Theorem \ref{radialsoldecayleft}, if $u(x)=u(|x|)\in C^1(B_1^c)$ is a solution to equation \eqref{Ex} with $u(1)=u(\infty)=0$, there exist $a \in (1,\infty)$, $0<C_1<C_2$ and $0<\tilde{C}_1<\tilde{C}_2$, such that 
		\begin{align*}C_1 (r-1)^{\frac{p-1-\alpha}{p-1}} \leq |u(r)| \leq C_2 (r-1)^{\frac{p-1-\alpha}{p-1}}, \ \ \forall r \in (1,a) \text{ and} \\
		\tilde{C}_1(r-1)^{-\frac{\alpha}{p-1}} \leq |u'(r)| \leq \tilde{C}_2 (r-1)^{-\frac{\alpha}{p-1}}, \ \ \forall r \in (1,a).
		\end{align*}
		Since \[ u'_+(1) = \lim _{r \to 1^+}\frac{u(r)-u(1)}{r-1},\] we have $0 <|u'_+(1)| <\infty$ when $\alpha=0$ and $ |u'_+(1)| =\infty$, when $\alpha>0$.
		
		\item If $p<N+\alpha$ and $w \in L^1\big((\bar{\xi},\infty);(r-1)^{\bar{\delta}}\big)$ for some $\bar{\xi}\in(1,\infty)$ and $\bar{\delta}=p-1$ when $\alpha\in (0,p-1)$ and $\bar{\delta}\in(p-1,N-1)$ when $\alpha=0$, then  $\mathrm{(A_{\epsilon,L})}$ holds for some $\epsilon\in (0,p-1).$ By Theorem \ref{radialsoldecayright}, if $u(x)=u(|x|)\in C^1(B_1^c)$ is a solution to equation \eqref{Ex} with $u(1)=u(\infty)=0$,  there exist $b \in (1,\infty)$, $0<C_1<C_2$ and $0<\tilde{C}_1<\tilde{C}_2$, such that 
		\begin{align*}
		C_1r^{-\frac{N-p+\alpha}{p-1}} \leq |u(r)| \leq C_2r^{-\frac{N-p+\alpha}{p-1}}, \ \ \forall r \in (b,\infty), \text{ and }\\
		\tilde{C}_1 r^{-\frac{N-1-\alpha}{p-1}} \leq |u'(r)| \leq \tilde{C}_2 r^{-\frac{N-1-\alpha}{p-1}}, \ \ \forall r \in (b,\infty).
		\end{align*}
	\end{itemize} 
\end{example}
\begin{remark}\rm
For instance, let $v(r)=1$ and $0<w(r)<C r^{-\gamma}$ ($\gamma>p$), we obtain better estimates for $u,u'$ at infinity than that of \cite{Chhetri-Drabek} and \cite{Anoop.CV}, by putting $\alpha=0$ in Example~\ref{degenexample}.
\end{remark}
\begin{example}[Singular weight]\rm
	Consider $1<p<N$ and let $p-N<\alpha<0$. 
\begin{itemize}
	\item If $w \in L^1\big((1,\infty);(r-1)^{p-1}\big) \cap L^1\big((1,\infty);(r-1)^{p-1-\alpha}\big),$ then the weights $v,w$ satisfy $\mathrm{(W_2)}$ of Corollary~\ref{exteriorembedding} and we get
	\[\mathcal{D}^{1,p}_0(B_1^c;v) \hookrightarrow \hookrightarrow L^p(B_1^c;w).\]  
	Hence, the eigenvalue problem \eqref{Ex} has a principal eigenpair due to Theorem~\ref{Theore.eigenpair}.
	
	\item If $w \in L^1\big((1,\xi);(r-1)^{p-1}\big)$ for some $\xi\in(1,\infty)$, then  $\mathrm{(A_{\epsilon,L})}$ holds for $\epsilon\in (\frac{(p-1)^2}{p-1-\alpha},p-1)$. By Theorem \ref{radialsoldecayleft}, if $u(x)=u(|x|)\in C^1(B_1^c)$ is a solution to equation \eqref{Ex} with $u(1)=u(\infty)=0$, there exist $a \in (1,\infty)$, $0<C_1<C_2$ and $0<\tilde{C}_1<\tilde{C}_2$, such that 
	\begin{align*}C_1 (r-1)^{\frac{p-1-\alpha}{p-1}} \leq |u(r)| \leq C_2 (r-1)^{\frac{p-1-\alpha}{p-1}}, \ \ \forall r \in (1,a) \text{ and} \\
	\tilde{C}_1(r-1)^{-\frac{\alpha}{p-1}} \leq |u'(r)| \leq \tilde{C}_2 (r-1)^{-\frac{\alpha}{p-1}}, \ \ \forall r \in (1,a).
	\end{align*}
	In this case, we have $u'_+(1)=0$.
	
	\item If $w \in L^1\big((\bar{\xi},\infty);(r-1)^{\bar{\delta}}\big)$ for some $\bar{\xi}\in(1,\infty)$ and $\bar{\delta}\in(p-1-\alpha,N-1)$, then  $\mathrm{(A_{\epsilon,L})}$ holds for some $\epsilon\in (0,p-1).$ By Theorem \ref{radialsoldecayright}, if $u(x)=u(|x|)\in C^1(B_1^c)$ is a solution to equation \eqref{Ex} with $u(1)=u(\infty)=0$,  there exist $b \in (1,\infty)$, $0<C_1<C_2$ and $0<\tilde{C}_1<\tilde{C}_2$, such that 
	\begin{align*}
	C_1r^{-\frac{N-p+\alpha}{p-1}} \leq |u(r)| \leq C_2r^{-\frac{N-p+\alpha}{p-1}}, \ \ \forall r \in (b,\infty), \text{ and }\\
	\tilde{C}_1 r^{-\frac{N-1-\alpha}{p-1}} \leq |u'(r)| \leq \tilde{C}_2 r^{-\frac{N-1-\alpha}{p-1}}, \ \ \forall r \in (b,\infty).
	\end{align*}
\end{itemize} 
\end{example}

\section*{Acknowledgment} P. Dr\'{a}bek and A. Sarkar were partly supported by the Grant Agency of the Czech Republic, project no. 18-032523S. K. Ho and A. Sarkar were supported by the project LO1506 of the Czech Ministry of Education, Youth and Sports.


\begin{thebibliography}{99}
	
\bibitem{Oscar-Drabek}	O. Agudelo and P. Dr\'{a}bek, 
\emph{Anisotropic semipositone quasilinear problems,} J. Math. Anal. Appl. 452 (2017), no. 2, 1145--1167.


\bibitem{Anoop.CV} T.V. Anoop, P. Dr\'{a}bek and S. Sasi,
\emph{Weighted quasilinear eigenvalue problems in exterior domains,} Calc. Var. Partial Differential Equations 53 (2015), no. 3--4, 961--975. 


\bibitem{Anoop.NA} T.V. Anoop, P. Dr\'{a}bek, L. Sankar and S. Sasi,
\emph{Antimaximum principle in exterior domains,} Nonlinear Anal. 130 (2016), 241--254.


\bibitem{Chhetri-Drabek} M. Chhetri and P. Dr\'{a}bek,
\emph{Principal eigenvalue of $p$-Laplacian operator in exterior domain,} Results Math. 66 (2014), no. 3--4, 461--468.


\bibitem{DiBenedetto} E. DiBenedetto,
\emph{$C^{1+\alpha}$ local regularity of weak solutions of degenerate elliptic equations,} Nonlinear Anal. 7 (8) (1983), 827--850.






\bibitem{Dra-Kuf-Kul}P. Dr\'{a}bek, A. Kufner and K. Kuliev,
\emph{ Half--linear Sturm--Liouville
	problem with weights: Asymptotic behavior of eigenfunctions,} Proc.
Steklov Inst. Math. 284 (2014), 148--154.


\bibitem{Drabek-Kuliev}	P. Dr\'{a}bek and K. Kuliev,
\emph{Half--linear Sturm--Liouville problem with
	weights,} Bull. Belg. Math. Soc. Simon Stern 19 (2012), 107--119.


\bibitem{Dra-Kuf-Nic} P. Dr\'{a}bek, A. Kufner and F. Nicolosi,
\emph{Quasilinear elliptic equations with degenerations and singularities. de Gruyter Series in Nonlinear Analysis and Applications}, 5. Walter de Gruyter and Co., Berlin, 1997.

\bibitem{Evans} L.C. Evans and R.F. Gariepy,
\emph{Measure theory and fine properties of functions,} CRC Press,
1992.




\bibitem{Ho-Sim} K. Ho and I. Sim, 
\emph{Corrigendum to ``Existence and some properties of solutions for  degenerate elliptic equations with exponent variable''[Nonlinear Anal. 98 (2014), 146--164]}, Nonlinear Anal. 128 (2015), 423--426. 

\bibitem{Kawohl} B. Kawohl, M. Lucia and S. Prashanth,
\emph{Simplicity of the principal eigenvalue for indefinite quasilinear
problems,} Adv. Differ. Equ. 12(4) (2007), 407--434.

\bibitem{Ladyzhenskaya} O.A. Ladyzhenskaya and N.N. Ural'tseva ,
\emph{Linear and quasilinear elliptic equations,} Acad. Press, 1968.


\bibitem{Les2}V. Le and K. Schmitt, \emph{On boundary value problems for degenerate quasilinear elliptic
equations and inequalities}, J. Differential Equations 144 (1998), 170--218.

\bibitem{Les1} A. L\^{e} and K. Schmitt, \emph{Variational eigenvalues of degenerate eigenvalue
	problems for the weighted $p$-Laplacian,} Adv. Nonlinear Stud. 5 (2005), no. 4, 573--585.

\bibitem{Lieberman}	G.M. Lieberman,
\emph{Boundary regularity for solutions of degenerate elliptic equations,} Nonlinear Anal. 12(11) (1988), 1203--1219. 


\bibitem{Mitidieri}E. Mitidieri and S.I. Pohozaev, \emph{A priori estimates and the absence of solutions of nonlinear partial differential equations and inequalities,} Tr. Mat. Inst.
Steklova 234 (2001) 1–384 (in Russian). Translation in Proc. Steklov Inst. Math. 234 (2001), 1--362.

\bibitem{Mon-Rad} E. Montefusco and V. Radulescu, \emph{Nonlinear eigenvalue problems for quasilinear
operators on unbounded domains,} NoDEA Nonlinear Differ. Equ. Appl. 8 (2001), 481--497.

\bibitem{Murthy} V. Murthy and G. Stampacchia;
\emph{Boundary value problems for some degenerate-elliptic operators}, Ann. Mat. Pura Appl. 80 (1968), 1--122.

\bibitem{Per-Pucci} K. Perera, P. Pucci and C Varga, \emph{An existence result for a class of quasilinear elliptic eigenvalue problems in unbounded domains,} NoDEA Nonlinear Differential Equations Appl. 21 (2014), no. 3, 441--451. 

\bibitem{Pucci-Serrin}P. Pucci and J. Serrin,
\emph{The strong maximum principle revisited,} J. Differential Equations 196 (2004), no. 1, 1--66.

\bibitem{Opic-Kufner} B. Opic and A. Kufner,
\emph{Hardy--Type Inequalities,} Pitman Research Notes in
Mathematics Series 279, Longman Scientific and Technical, Harlow, 1990.



\bibitem{Wang}Y.-Z. Wang  and H.-Q. Li, \emph{Lower bound estimates for the first eigenvalue of the weighted p-Laplacian on smooth metric measure spaces,} Differential Geom. Appl. 45 (2016), 23--42.
\end{thebibliography}
\end{document}